\newtheorem{theorem}{Theorem}[section]
\newtheorem{lemma}[theorem]{Lemma}
\newtheorem{proposition}[theorem]{Proposition}
\newtheorem{corollary}[theorem]{Corollary}
\newtheorem{problem}[theorem]{Problem}
\theoremstyle{definition}
\newcommand{\F}{\mathcal {F}}
\newcommand{\Ra}{\Rightarrow}
\newcommand{\U}{\mathcal U}
\newcommand{\A}{\mathcal A}
\newcommand{\BB}{\mathcal B}
\newcommand{\w}{\omega}
\newcommand{\IN}{\mathbb N}
\newcommand{\ostar}{\circledast}
\newcommand{\la}{\langle}
\newcommand{\ra}{\rangle}
\newcommand{\IZ}{\mathbb Z}
\title[Characterizing semigroups with commutative extensions]{Characterizing semigroups $X$ with commutative extensions\\ $\varphi(X)$, $\lambda(X)$, $N_2(X)$, $\upsilon(X)$}
\author[T. Banakh, V. Gavrylkiv]{Taras Banakh and Volodymyr Gavrylkiv}
\address[T.~Banakh]{Ivan Franko University of Lviv, Ukraine and
Jan Kochanowski University, Kielce, Poland}
\email{t.o.banakh@gmail.com}
\address[V.~Gavrylkiv]{Vasyl Stefanyk Precarpathian National University,
Ivano-Frankivsk,
Ukraine}
\email{vgavrylkiv@yahoo.com}
\thanks{The first author has been partially financed by NCN means granted by decision DEC-2011/01/B/ST1/01439}
\subjclass{20M10, 20M14, 20M17, 20M18, 54B20}
\keywords{Commutative semigroup, superextension, semigroup of filters, semigroup of linked upfamilies}
\begin{document}

\begin{abstract}
We characterize semigroups $X$ whose semigroups of  filters $\varphi(X)$, maximal linked systems
$\lambda(X)$, linked upfamilies $N_2(X)$,
and upfamilies $\upsilon(X)$ are commutative.
\end{abstract}
\maketitle

\section{Introduction}

In this paper we  investigate  the algebraic structure of various
extensions of an inverse semigroup  $X$ and detect semigroups
whose extensions $\varphi(X)$, $\lambda(X)$, $N_2(X)$,
$\upsilon(X)$ and their subsemigroups  $\varphi^\bullet(X)$,
$\lambda^\bullet(X)$, $N_2^\bullet(X)$, $\upsilon^\bullet(X)$ are
commutative.

The thorough study of various extensions of semigroups was started in
\cite{G2} and continued in \cite{BG2}--\cite{BGN}. The
largest among these extensions is the semigroup $\upsilon(X)$ of
all upfamilies on $X$.

A family $\F$ of subsets of a set $X$ is called an {\em upfamily}
if $\emptyset\notin\F\ne\emptyset$ and for each set $F\in\F$ any
subset $E\supset F$ of $X$ belongs to $\F$.
Each family $\F$ of non-empty subsets of $X$ generates the upfamily
$$\la F:F\in\F\ra=\{E\subset X:\exists F\in \F\;\;F\subset E\}.$$

The space of all
upfamilies on $X$ is denoted by $\upsilon(X)$. It is a closed
subspace of the double power-set $\mathcal P(\mathcal P(X))$
endowed with the compact Hausdorff topology of the Tychonoff
product $\{0,1\}^{\mathcal P(X)}$. Identifying each point $x\in X$
with the {\em principal ultrafilter} $\la x\ra=\{A\subset X:x\in A\}$, we can
identify $X$ with a subspace of $\upsilon(X)$. Because of that we
call $\upsilon(X)$ an extension of $X$. For an upfamily $\F\in\upsilon(X)$ by
$$\F^\perp=\{E\subset X:\forall F\in\F\;\;E\cap F\ne\emptyset\}$$we denote
the {\em transversal} of $\F$. By \cite{G1},
$(\F^\perp)^\perp=\F$, so
$$\perp:\upsilon(X)\to\upsilon(X),\;\;\perp:\F\mapsto\F^\perp,$$
is an involution on $\upsilon(X)$. For a subset $S\subset
\upsilon(X)$ we put $S^\perp=\{\F^\perp:\F\in
S\}\subset\upsilon(X)$.

The compact Hausdorff space $\upsilon(X)$ contains many other
important extensions of $X$ as closed subspaces. In particular, it
contains the spaces $N_2(X)$ of linked upfamilies, $\lambda(X)$ of
maximal linked upfamilies, $\varphi(X)$ of filters, and  $\beta(X)$ of
ultrafilters on $X$; see \cite{G1}. Let us recall that an upfamily
$\F\in\upsilon(X)$ is called
\begin{itemize}
\item {\em linked} if $A\cap B\ne \emptyset$ for any sets
$A,B\in\F$; \item {\em maximal linked} if $\F=\F'$ for any linked
upfamily $\F'\in\upsilon(X)$ that contains $\F$; \item a {\em
filter} if $A\cap B\in\F$ for any $A,B\in\F$; \item an {\em
ultrafilter} if  $\F=\F'$ for any filter $\F'\in\upsilon(X)$ that
contains $\F$.
\end{itemize}
The family $\beta(X)$ of all ultrafilters on $X$ is called the
{\em Stone-\v Cech extension} and the family $\lambda(X)$ of all
maximal linked upfamilies is called the {\em superextension} of
$X$, see \cite{vM} and \cite{Ve}. It can be shown that $\lambda(X)=\{\F\in\upsilon(X):\F^\perp=\F\}$, so $\lambda(X)^\perp=\lambda(X)$ and $\beta(X)^\perp=\beta(X)$. The arrows in the following diagram
denote the identity inclusions between various extensions of a set
$X$.
$$\xymatrix{
&\varphi(X)^\perp\ar[r]&N_2(X)^\perp\ar[rd]&\\
X\ar[r]\ar[ru]\ar[rd]&\beta(X)\ar[r]\ar[d]\ar[u]&\lambda(X)\ar[r]\ar[d]\ar[u]&\upsilon(X)\\
 &\varphi(X)\ar[r]&N_2(X)\ar[ur].}
$$

We say that an upfamily $\U\in\upsilon(X)$ {\em is finitely
supported} if $\U=\la F_1,\dots,F_n\ra$ for some non-empty finite subsets $F_1,\dots,F_n\subset X$. By $\upsilon^\bullet(X)$ we
denote the subspace of $\upsilon(X)$ consisting of finitely
supported upfamilies on $X$. Let
$$\varphi^\bullet(X)=\varphi(X)\cap\upsilon^\bullet(X), \;\;
\lambda^\bullet(X)=\lambda(X)\cap\upsilon^\bullet(X),\;\;
N_2^\bullet(X)=N_2(X)\cap\upsilon^\bullet(X).$$ Since each
finitely supported ultrafilter is principal, the set
$\beta^\bullet(X)=\beta(X)\cap\upsilon^\bullet(X)$ coincides with
$X$ (identified with the subspace of all principal ultrafilters in
$\upsilon(X)$~). The embedding relations between these spaces of
finitely supported upfamilies are indicated in the following
diagram:

$$\xymatrix{
&\varphi^\bullet(X)^\perp\ar[r]&N_2^\bullet(X)^\perp\ar[rd]&\\
X\ar@{=}[r]\ar[ru]\ar[rd]&\beta^\bullet(X)\ar[r]\ar[d]\ar[u]&\lambda^\bullet(X)\ar[r]\ar[d]\ar[u]&\upsilon^\bullet(X)\\
 &\varphi^\bullet(X)\ar[r]&N_2^\bullet(X)\ar[ur].}
$$

Any map $f:X\to Y$ induces a continuous map $$\upsilon
f:\upsilon(X)\to\upsilon(Y),\quad \upsilon f:\F\mapsto \{A\subset
Y:f^{-1}(A)\in\F\}.$$ By \cite{G1}, $\upsilon
f(\F^\perp)=\big(\upsilon f(\F)\big)^\perp$ and $\upsilon
f\big(\beta(X)\big)\subset\beta(Y)$, $\upsilon
f\big(\lambda(X)\big)\subset\lambda(Y)$, $\upsilon
f\big(\varphi(X)\big)\subset\varphi(Y)$, $\upsilon
f\big(N_2(X)\big)\subset N_2(Y)$, and $\upsilon
f\big(\upsilon^\bullet(X)\big)\subset\upsilon^\bullet(X)$. If the
map $f$ is injective, then  $\upsilon f$ is a topological
embedding, which allows us to identify the extensions $\beta(X)$,
$\lambda(X)$, $\varphi(X)$, $N_2(X)$ and $\upsilon(X)$ with
corresponding closed subspaces in $\beta(Y)$, $\lambda(Y)$,
$\varphi(Y)$, $N_2(Y)$ and $\upsilon(Y)$, respectively.

If $*:X\times X\to X$, $*:(x,y)\mapsto xy$, is a binary operation
on $X$, then there is an obvious way of extending this operation
onto the space $\upsilon(X)$. Just put
$$\A\circledast\BB=\la A*B:A\in\A,\;B\in\BB\ra$$where $A*B=\{ab:a\in A,\;b\in B\}$
is the pointwise product of sets $A,B\subset X$. The upfamily
$\A\circledast\BB$ will be called the {\em pointwise product} of
the upfamilies $\A,\BB$. It is clear that this extension
$\circledast:\upsilon(X)\times\upsilon(X)\to\upsilon(X)$ of the
operation $*:X\times X\to X$ is associative and commutative if so
is the operation $*$. So, for an associative binary operation $*$
on $X$, its extension $\upsilon(X)$ endowed with the operation
$\circledast$ of pointwise product becomes a semigroup, containing
the subspaces $\varphi(X)$, $\varphi^\bullet(X)$, $N_2(X)$, and
$N_2^\bullet(X)$ as subsemigroups. However, the subspaces
$\beta(X)$ and $\lambda(X)$ are not subsemigroups in
$(\upsilon(X),\circledast)$. To make $\beta(X)$ a semigroup
extension of $X$, Ellis \cite{Ellis} suggested a less obvious
extension of an (associative) binary operation $*:X\times X\to X$
to an (associative) binary operation on $\beta(X)$ letting
\begin{equation}\label{formula}\mathcal A*\mathcal B=\Big\la\bigcup_{a\in A}a*B_a:A\in\A,\;\;\{B_a\}_{a\in A}\subset\mathcal B\Big\ra
\end{equation}for ultrafilters $\A,\mathcal B\in\beta(X)$. It is clear that $\A\ostar\BB\subset\A*\BB$ but in general $\A\ostar\BB\ne\A*\BB$.

In \cite{G2} it was observed that the formula (\ref{formula})
determines an extension of the operation $*$ to an (associative)
binary operation $*:\upsilon(X)\times\upsilon(X)\to\upsilon(X)$ on
the extension $\upsilon(X)$ of $X$. So, for each semigroup
$(X,*)$, its extension $\upsilon(X)$ endowed with the extended
operation $*$ is a semigroup, containing the subspaces $\beta(X)$,
$\varphi(X)$, $\lambda(X)$, $N_2(X)$ as closed subsemigroups.
Moreover, since $\upsilon^\bullet(X)$ is a subsemigroup of
$\upsilon(X)$, the subspaces $X=\beta^\bullet(X)$,
$\varphi^\bullet(X)$, $\lambda^\bullet(X)$, $N_2^\bullet(X)$ also
are subsemigroups of $\upsilon(X)$. Algebraic and topological
properties of these semigroups have been studied in \cite{G2},
\cite{BG2}--\cite{BGN}. In particular, in \cite{BGN} and \cite{BG10} we studied
properties of extensions of groups, while \cite{BGs} and
\cite{BGi} were devoted to extensions of semilattices and inverse
semigroups, respectively. In contrast to the operation
$\circledast$, the extended operation $*$ on the semigroup
$\upsilon(X)$ and its subsemigroups rarely is commutative. For
example, by \cite{BGN} a group $X$ has commutative superextension
$\lambda(X)$ if and only if $X$ is a group of cardinality $|X|\le
4$. According to \cite{BGs}, a semilattice $X$ has commutative
extension $\upsilon(X)$ if and only if $X$ is finite and linearly
ordered.

Let $X$ be a semigroup. A subsemigroup $S\subset\upsilon(X)$ is
defined to be {\em supercommutative} if $$\A*\mathcal B=\A\ostar
\mathcal B=\mathcal B\ostar \A=\mathcal B*\A$$for any upfamilies
$\A,\mathcal B\in S$. It is clear that each supercommutative
subsemigroup $S\subset\upsilon(X)$ is commutative. The converse is
not true as we shall see in Section~\ref{s10}.

In this paper we study the commutativity and supercommutativity of the semigroups
$\upsilon(X)$, $N_2(X)$, $\lambda(X)$, $\varphi(X)$, $\beta(X)$,
$\upsilon^\bullet(X)$, $N_2^\bullet(X)$, $\lambda^\bullet(X)$,
$\varphi^\bullet(X)$ and characterize semigroups
$X$ whose various extensions are commutative or supercommutative. In the preliminary
Sections~\ref{s2}, \ref{s3} we shall analyze the structure of periodic
commutative semigroups and projective extensions of semigroup, Section~\ref{s5} is devoted to
square-linear semigroups which will play a crucial role in
Sections~\ref{s7} and \ref{s8} devoted to the study of
commutativity and supercommutativity of the semigroups $\upsilon(X)$,
$\upsilon^\bullet(X)$, $N_2(X)$ and $N_2^\bullet(X)$. In
Section~\ref{s6} we characterize semigroups $X$ with
(super)commutative extensions $\beta(X)$, $\varphi(X)$,
$\varphi^\bullet(X)$, and in Section~\ref{s9} we detect semigroups with
commutative extensions $\lambda(X)$ and $\lambda^\bullet(X)$. In Section~\ref{s10} we study the structure of semigroups $X$ whose superextension $\lambda(X)$ is supercommutative.

\section{The structure of periodic commutative semigroups}\label{s2}

In this section we recall some known information on the structure
of periodic commutative semigroups. A semigroup $S$ is called {\em
periodic} if each element $x\in S$ generates a finite semigroup
$\{x^k\}_{k\in\IN}$. A semigroup $S$ generated by a single element
$x$ is called {\em monogenic}. If a monogenic semigroup is
infinite, then it is isomorphic to the additive semigroup $\IN$ of positive integers.
A finite monogenic semigroup $S=\{x^k\}_{k\in\IN}$ also has
simple structure (cf. \cite[\S1.2]{Howie}): there are positive
integer numbers $n<m$ such that
\begin{itemize}
\item $S=\{x,\dots,x^{m-1}\}$, $m=|S|+1$ and $x^n=x^m$;
\item $C_x=\{x^n,\dots,x^{m-1}\}$ is
a cyclic subgroup of $S$;
\item the cyclic subgroup $C_x$
coincides with the minimal ideal of $S$;
\item the neutral element
$e_x$ of the group $C_x$ is a unique idempotent of $S$ and the
cyclic group $C_x$ is generated by the element $xe_x$.
\end{itemize}
Such monogenic semigroups will be denoted by $\la x\mid x^n=x^m\ra$.
\smallskip

For a semigroup $S$ let $$E(S)=\{e\in S:ee=e\}$$ be the {\em idempotent part} of $S$.
For each idempotent $e\in E(S)$ let $$H_e=\{x\in S:\exists y\in S\;\;xyx=x,\;yxy=y,\;xy=e=yx\}$$be the maximal subgroup of $S$ containing the idempotent $e$. The union $$C(S)=\bigcup_{e\in E(S)}H_e$$ of all (maximal) subgroups of $S$ is called the {\em Clifford part} of $S$. The Clifford part $C(S)$ is contained in the {\em regular part}
$$R(S)=\{x\in S:x\in xSx\}$$of $S$.
If a semigroup $S$ is commutative, then $R(S)=C(S)$ and the subsets $E(S)$ and $R(S)=C(S)$ are subsemigroups of $S$.

If a semigroup $S$ is periodic, then for each element $x\in S$ the
monogenic semigroup $\{x^k\}_{k\in\IN}$ contains a unique
idempotent $e_x$. So, we can consider the map
$$e_*:S\to E(S),\;\;e_*:x\mapsto e_x,$$which
projects the semigroup $S$ onto its idempotent part $E(S)$. The
map $$c_*:S\to C(S),\;\;c_*:x\mapsto e_x\cdot x,$$projects the
semigroup $S$ onto its Clifford part. If a periodic semigroup $S$
is commutative, then the projections $e_*:S\to E(S)$ and $c_*:S\to
C(S)$ are semigroup homomorphisms. In this case, for every
idempotent $e\in E(S)$, $S_e=\{x\in S:e_x=e\}$ is a subsemigroup
of $S$ with a unique idempotent $e$. So, the semigroup $S$
decomposes into the disjoint union $S=\bigcup_{e\in E(S)}S_e$ of
semigroups $S_e$ parametrized by idempotents $e\in E(S)$.

\section{Projection Extensions of Semigroups}\label{s3}

A semigroup $X$ is called a {\em projection extension} of a subsemigroup $Z\subset X$ if there is a function $\pi:X\to Z$ (called the {\em projection of $X$ onto $Z$}) such that
\begin{itemize}
\item $\pi(z)=z$ for each $z\in Z$;
\item $x\cdot y=\pi(x)\cdot\pi(y)\in Z$ for all $x,y\in X$.
\end{itemize}
It follows from $\pi(xy)=xy=\pi(x)\cdot\pi(y)$ that the projection $\pi:X\to Z$ necessarily is a homomorphism of $X$ onto its subsemigroup $Z$.

If a map $\pi:X\to Z$ of semigroups $X$ and $Z$ is a homomorphism,
then by \cite{G2} the map $\upsilon\pi:\upsilon(X)\to\upsilon(Z)$
is a homomorphism  too. So, we have the following statement.

\begin{proposition}\label{p3.1} If a semigroup $X$ is a projection extension of a subsemigroup $Z\subset X$, then the projection $\pi:X\to Z$ induces a homomorphism $\upsilon\pi:\upsilon(X)\to\upsilon(Z)$ witnessing that the semigroup $\upsilon(X)$ is a projection extension of the subsemigroup $\upsilon(Z)$.
\end{proposition}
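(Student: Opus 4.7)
The plan is to combine functoriality of the extension operation $\upsilon$ with the defining identity $xy=\pi(x)\pi(y)\in Z$ of a projection extension. Since $\pi(xy)=xy=\pi(x)\pi(y)$, the projection $\pi:X\to Z$ is automatically a semigroup homomorphism, so the result from \cite{G2} recalled in the introduction already supplies a semigroup homomorphism $\upsilon\pi:\upsilon(X)\to\upsilon(Z)$. What remains is to verify the two bullet points in the definition of a projection extension for $\upsilon(Z)\subset\upsilon(X)$ with projection $\upsilon\pi$.

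First I would check that $\upsilon\pi$ restricts to the identity on the canonical copy $\upsilon i(\upsilon(Z))$ of $\upsilon(Z)$ inside $\upsilon(X)$, where $i:Z\hookrightarrow X$ is the inclusion. Functoriality of the assignment $f\mapsto\upsilon f$, together with $\pi\circ i=\mathrm{id}_Z$, gives $\upsilon\pi\circ\upsilon i=\upsilon(\pi\circ i)=\mathrm{id}_{\upsilon(Z)}$, which is exactly the first condition.

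Next comes the main step: for any $\A,\BB\in\upsilon(X)$ the product $\A*\BB$ must lie in $\upsilon i(\upsilon(Z))$ and coincide with $\upsilon\pi(\A)*\upsilon\pi(\BB)$. The key observation is that, because $a\cdot b=\pi(a)\pi(b)\in Z$ for all $a,b\in X$, every generating set
$$\bigcup_{a\in A}a*B_a$$
in the formula (\ref{formula}) for $\A*\BB$ is contained in $Z$; from this and the upfamily property it is routine to deduce that $E\in\A*\BB$ implies $E\cap Z\in\A*\BB$, which is precisely the condition characterizing membership of $\A*\BB$ in $\upsilon i(\upsilon(Z))$. Applying $\upsilon\pi$ to both sides and combining its homomorphism property with the identity from the previous paragraph yields $\A*\BB=\upsilon i(\upsilon\pi(\A)*\upsilon\pi(\BB))$, i.e.\ the required equality under the standard identification.

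The only real friction I anticipate is notational rather than mathematical: one has to be careful to distinguish $\upsilon(Z)$ from its image $\upsilon i(\upsilon(Z))\subset\upsilon(X)$, and to verify that the statement ``every generator of $\A*\BB$ sits inside $Z$'' really is equivalent to $\A*\BB\in\upsilon i(\upsilon(Z))$. Once this bookkeeping is done, the argument collapses to functoriality of $\upsilon$ and the defining formula (\ref{formula}), with no further structural input about semigroups beyond that already packaged in the projection condition.
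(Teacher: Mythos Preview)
Your proposal is correct and follows essentially the same approach as the paper: the paper's entire argument is the single sentence that $\upsilon\pi$ is a homomorphism by \cite{G2}, leaving the verification of the two projection-extension axioms implicit. You simply fill in those details---functoriality for $\upsilon\pi|_{\upsilon(Z)}=\mathrm{id}$, and the observation that every basic set $\bigcup_{a\in A}aB_a\subset Z$ forces $\A*\BB\in\upsilon i(\upsilon(Z))$, after which the homomorphism property yields $\A*\BB=\upsilon\pi(\A)*\upsilon\pi(\BB)$---so there is no substantive difference in strategy.
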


\begin{corollary} Assume that a semigroup $X$ is a projection extension of a subsemigroup $Z\subset X$, $\pi:X\to Z$ is a projection of $X$ onto $Z$. Then each subsemigroup $S\subset\upsilon(X)$ with $\upsilon\pi(S)\subset S$ is a projection extension of the subsemigroup $\upsilon\pi(S)=S\cap\upsilon(Z)$. Consequently, the semigroup $S$ is \textup{(}super\textup{)}commutative if and only if so is its subsemigroup $S\cap\upsilon(Z)$.
\end{corollary}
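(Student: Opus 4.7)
The plan is to exploit Proposition \ref{p3.1} combined with the observation that, since $\pi$ is the identity on $Z$, the induced map $\upsilon\pi$ is the identity on $\upsilon(Z)$ (viewed as a subspace of $\upsilon(X)$).

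We first verify this observation. Letting $\iota:Z\hookrightarrow X$ denote the inclusion, the equality $\pi\circ\iota=\mathrm{id}_Z$ yields $\upsilon\pi\circ\upsilon\iota=\mathrm{id}_{\upsilon(Z)}$, so $\upsilon\pi$ fixes every element of $\upsilon(Z)$. In particular, the hypothesis $\upsilon\pi(S)\subset S$ combined with $\upsilon\pi(\upsilon(X))\subset\upsilon(Z)$ gives $\upsilon\pi(S)\subset S\cap\upsilon(Z)$; the reverse inclusion is immediate because each $\F\in S\cap\upsilon(Z)$ is its own image $\upsilon\pi(\F)$. This proves the displayed equality $\upsilon\pi(S)=S\cap\upsilon(Z)$.

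By Proposition \ref{p3.1}, for every $\A,\BB\in\upsilon(X)$ we have $\A*\BB=\upsilon\pi(\A)*\upsilon\pi(\BB)$. Restricting to $\A,\BB\in S$ and recalling that $\upsilon\pi|_S$ fixes $S\cap\upsilon(Z)$, this exhibits $S$ as a projection extension of $S\cap\upsilon(Z)$ with projection $\upsilon\pi|_S$. Commutativity of $S\cap\upsilon(Z)$ then forces $\A*\BB=\upsilon\pi(\A)*\upsilon\pi(\BB)=\upsilon\pi(\BB)*\upsilon\pi(\A)=\BB*\A$ for all $\A,\BB\in S$, while the converse direction is trivial because $S\cap\upsilon(Z)$ is a subsemigroup of $S$.

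The supercommutative case will require one additional observation: the identity $\A\ostar\BB=\upsilon\pi(\A)\ostar\upsilon\pi(\BB)$ for arbitrary $\A,\BB\in\upsilon(X)$. This is the only step not covered directly by Proposition \ref{p3.1}, but it is transparent from the pointwise identity $A\cdot B=\pi(A)\cdot\pi(B)$ for all $A,B\subset X$ (applying the projection-extension property elementwise), so both sides are the upfamily generated by the sets $\pi(A)\cdot\pi(B)$ with $A\in\A$, $B\in\BB$. Granting this, supercommutativity of $S\cap\upsilon(Z)$ delivers the chain $\A*\BB=\upsilon\pi(\A)*\upsilon\pi(\BB)=\upsilon\pi(\A)\ostar\upsilon\pi(\BB)=\A\ostar\BB$ and symmetrically for the reversed pair, yielding supercommutativity of $S$. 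The converse is again inherited from super-sub-semigroup inclusion. The whole proof is essentially bookkeeping once one notices that $\upsilon\pi$ fixes $\upsilon(Z)$ pointwise; that recognition is the only mildly nontrivial step.
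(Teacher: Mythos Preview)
Your proof is correct and is precisely the natural elaboration of the corollary; the paper itself leaves this result unproved, treating it as an immediate consequence of Proposition~\ref{p3.1} and the definition of a projection extension. Your additional observation $\A\ostar\BB=\upsilon\pi(\A)\ostar\upsilon\pi(\BB)$ for the supercommutative case is the one detail not literally contained in Proposition~\ref{p3.1}, and you handle it correctly via the pointwise identity $A*B=\pi(A)*\pi(B)$.
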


\begin{corollary}\label{c:pext2} Assume that a semigroup $X$ is a projection extension of a subsemigroup $Z\subset X$, and $\varepsilon\in\{\upsilon,\upsilon^\bullet,N_2,N_2^\bullet, \varphi,\varphi^\bullet,\lambda,\lambda^\bullet,\beta,\beta^\bullet\}$. The extension $\varepsilon(X)$ of $X$ is \textup{(}super\textup{)}commutative if and only if the extension $\varepsilon(Z)$ of the semigroup $Z$ is \textup{(}super\textup{)}commutative.
\end{corollary}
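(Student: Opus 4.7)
The plan is to reduce this corollary to the preceding one by taking $S := \varepsilon(X) \subset \upsilon(X)$. Three conditions then need to be verified for each functor $\varepsilon$ in the list: (i) $\varepsilon(X)$ is a subsemigroup of $(\upsilon(X),*)$; (ii) $\upsilon\pi(\varepsilon(X)) \subset \varepsilon(X)$; and (iii) the identification $\varepsilon(X) \cap \upsilon(Z) = \varepsilon(Z)$ holds under the canonical embedding $\upsilon(Z) \hookrightarrow \upsilon(X)$ induced by the inclusion $Z \hookrightarrow X$. Granted these, the preceding corollary immediately yields that $\varepsilon(X)$ is (super)commutative if and only if $\varepsilon(Z)$ is.

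Conditions (i) and (ii) are already recorded in the Introduction: each $\varepsilon(X)$ among the ten listed is a closed subsemigroup of $(\upsilon(X),*)$, and the induced map $\upsilon\pi$ preserves being an ultrafilter, a maximal linked upfamily, a filter, a linked upfamily, or a finitely supported upfamily. Combining these preservation properties gives $\upsilon\pi(\varepsilon(X)) \subset \varepsilon(Z) \subset \varepsilon(X)$, with the last inclusion coming from (iii). For (iii) itself, the Introduction records that the inclusion $Z \hookrightarrow X$ induces a topological embedding of $\varepsilon(Z)$ into $\varepsilon(X)$; the remaining content of (iii) is the routine observation that an upfamily on $X$ concentrated on $Z$ (that is, one containing $Z$) is a filter, linked upfamily, or finitely supported upfamily on $X$ exactly when its restriction to $Z$ has the same property.

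The only case in (iii) that merits a brief explicit argument is the maximality clause for $\varepsilon \in \{\beta, \lambda\}$. If $\F \in \varepsilon(Z)$ is viewed as an element of $\upsilon(X)$, a hypothetical proper filter (respectively, linked) enlargement $\F'$ of $\F$ inside $\upsilon(X)$ still satisfies $Z \in \F'$ since $Z \in \F \subset \F'$, so intersecting each member of $\F'$ with $Z$ produces a proper filter (respectively, linked) enlargement of $\F$ inside $\upsilon(Z)$, contradicting the maximality of $\F$ there. Conversely, any maximal enlargement in $\upsilon(Z)$ remains maximal among enlargements in $\upsilon(X)$ that are concentrated on $Z$. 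With (i)--(iii) verified, the preceding corollary gives the claim; the only step requiring genuine (if mild) effort is this maximality check for $\beta$ and $\lambda$, while everything else is read off the Introduction.
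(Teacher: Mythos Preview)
Your proposal is correct and follows exactly the route the paper intends: the paper states this corollary without proof, leaving it as an immediate consequence of the preceding corollary applied to $S=\varepsilon(X)$, together with the preservation facts $\upsilon\pi(\varepsilon(X))\subset\varepsilon(Z)$ and the identification $\varepsilon(Z)\subset\varepsilon(X)$ recorded in the Introduction. You have simply made explicit the routine verifications (including the maximality check for $\beta$ and $\lambda$) that the paper leaves to the reader.
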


\section{Semicomplete digraphs}\label{s4}

In this section we recall some information on digraphs. In the next section this information will be used for describing the structure of square-linear semigroups.

By an {\em directed graph} (briefly, a {\em digraph}) we shall understand a pair $(X,\Delta)$ consisting of a set $X$ and a subset $\Delta\subset X\times X$. Elements $x\in X$ are called {\em vertices} and ordered pairs $(x,y)\in\Delta$ called {\em edges} of the digraph $(X,\Delta)$.
An edge $(x,y)\in\Delta$ is called {\em pure} if $(y,x)\notin\Delta$.
A digraph $(X,\Delta)$ is called
{\em complete} if $\Delta=X\times X$ and {\em semicomplete} if $\Delta\cup\Delta^{-1}=X\times X$, where $\Delta^{-1}=\{(y,x):(x,y)\in \Delta\}$.

A sequence $x_0,\dots,x_n$ of vertices of a digraph $(X,\Delta)$ is called a ({\em pure}) {\em cycle of length $n$} if $x_0=x_n$ and for every $i<n$ the pair $(x_i,x_{i+1})$ is a (pure) edge of the digraph $(X,\Delta)$. A cycle $x_0,x_1,\dots,x_n$ in a digraph $(X,D)$ is called {\em bipartite} if the number $n$ is even and for each numbers $i,j\in\{1,\dots,n\}$ with odd difference $i-j$ we get $(x_i,x_j)\notin\Delta\cap\Delta^{-1}$. Bipartite cycles can be equivalently defined as cycles $x_0,y_1,x_1,y_2,\dots,y_n,x_n$ such that $(x_i,y_j)\notin\Delta\cap\Delta^{-1}$ for any $1\le i,j\le n$.

It is easy to see that a cycle of length 4 is bipartite if and only if it is pure.

\begin{lemma}\label{l:graph} A semicomplete digraph $(X,\Delta)$ contains a pure cycle of length 4 if and only if it contains a bipartite cycle.
\end{lemma}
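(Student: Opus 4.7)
\medskip
\noindent\textit{Proof plan.}
The forward implication is trivial: by the observation stated immediately before the lemma, any pure $4$-cycle is already a bipartite cycle.

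For the converse, I would run a minimal-counterexample argument. Pick a bipartite cycle $C=x_0,y_1,x_1,y_2,\dots,y_n,x_n$ of the smallest possible length $2n$ in $(X,\Delta)$ and aim to show that necessarily $n=2$. Once $n=2$, the cycle $C$ itself has length $4$, and the same observation (a $4$-cycle is bipartite iff it is pure) immediately turns $C$ into the required pure $4$-cycle.

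The heart of the plan is to exclude the case $n\ge 3$. Assuming $n\ge 3$, I would look at the chord between $x_0=x_n$ and $y_2$. Bipartiteness of $C$ rules out $(x_0,y_2)\in\Delta\cap\Delta^{-1}$, while semicompleteness of $(X,\Delta)$ forces at least one of $(x_0,y_2)$, $(y_2,x_0)$ to lie in $\Delta$; so exactly one of them does. If $(x_0,y_2)\in\Delta$, the sequence $x_0,y_2,x_2,y_3,\dots,y_n,x_n$ is a cycle of length $2(n-1)$ whose bipartition is the restriction of the original one, and the bipartite condition is inherited from $C$. If instead $(y_2,x_0)\in\Delta$, then $x_0,y_1,x_1,y_2,x_0$ is a cycle of length $4$ (strictly shorter than $C$ because $n\ge 3$) with $x$-side $\{x_0,x_1\}$ and $y$-side $\{y_1,y_2\}$, again bipartite by inheritance from $C$. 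Either subcase produces a bipartite cycle strictly shorter than $C$, contradicting minimality.

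The only real bookkeeping step I anticipate is verifying that in each subcase the index pairs $(x_i,y_j)$ arising in the new cycle fall within the ranges for which the bipartite condition of $C$ has been assumed; this is routine, since the equivalent reformulation of bipartiteness recalled just before the lemma quantifies uniformly over all $x$- and $y$-vertices of the cycle.
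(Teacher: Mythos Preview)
Your proof is correct and follows essentially the same minimal-length argument as the paper: take a shortest bipartite cycle, examine the chord between $x_0$ and the third vertex along the cycle ($y_2$ in your notation, $x_3$ in the paper's), and use semicompleteness plus bipartiteness to shortcut the cycle in one of two ways, contradicting minimality. The only step the paper makes explicit that you leave implicit is ruling out the degenerate length-$2$ case (your $n=1$), which is immediate since the two cycle edges then force $(x_1,y_1)\in\Delta\cap\Delta^{-1}$.
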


\begin{proof} Let $x_0,x_1,\dots,x_n$ be a bipartite cycle in the digraph of the smallest possible  length $n$. The length $n$ is even and cannot be equal to 2 as otherwise $(x_1,x_2)=(x_1,x_0)\in\Delta\cap\Delta^{-1}$. So, $n\ge 4$.

We claim that $n=4$. Assume conversely that $n>4$ and consider the pair $(x_0,x_3)$. Since the cycle is bipartite and the digraph $(X,\Delta)$ is semicomplete, either $(x_0,x_3)$ or $(x_3,x_0)$ is a pure edge of the digraph. If $(x_3,x_0)\in\Delta$, then $x_0,x_1,x_2,x_3,x_0$ is a bipartite (and pure) cycle of length 4 in $(X,\Delta)$. If $(x_0,x_3)\in\Delta$, then $x_0,x_3,x_5,\dots,x_n$ is a bipartite cycle of length $n-2\ge 4$ in $(X,\Delta)$, which contradicts the minimality of $n$.
\end{proof}

\section{Square-linear semigroups}\label{s5}

A semigroup $S$ is called {\em linear} if $xy\in\{x,y\}$ for any elements $x,y\in S$. It follows that each element $x$ of a linear semigroup is an idempotent. So, linear semigroups are {\em bands}, i.e., semigroups of idempotents. Commutative bands are called {\em semilattices}. So, each linear commutative semigroup is a semilattice. Each semilattice $E$ is endowed with a partial order $\le$ defined by $x\le y$ iff $xy=x$.

A semigroup $S$ is called {\em square-linear} if $xy\in\{x^2,y^2\}$ for all elements $x,y\in S$.

\begin{proposition}\label{p:square-linear} Let $S$ be a square-linear commutative semigroup and $x,y,z\in S$ be any elements. Then
\begin{enumerate}
\item $S$ is periodic and $x^3=x^4=e_x\in E(S)$;
\item the idempotent part $E(S)$ of $S$ is a linear semilattice;
\item the Clifford part $C(S)$ of $S$ coincides with $E(S)$;
\item $xy=e_xe_y$ if $x^2,y^2\in E(S)$;
\item $xyz=e_xe_ye_z$;
\item If $x^2\notin E(S)$, then $e_x$ is the largest element of the semilattice $E(S)$.
\end{enumerate}
\end{proposition}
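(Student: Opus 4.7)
The plan is to prove the six claims in the order stated, since each builds on its predecessors. Part (1) drops out of square-linearity applied to $x\cdot x^2\in\{x^2,x^4\}$: either $x^3=x^2$, in which case $x^2=x^3=x^4$ is idempotent, or $x^3=x^4$, in which case $x^k=x^3$ for all $k\ge 3$ and $(x^3)^2=x^6=x^3$. In both subcases $x^3=x^4=e_x$, and $\{x^k\}_{k\in\IN}\subseteq\{x,x^2,e_x\}$ is finite. Part (2) is routine: $E(S)$ is a subsemigroup of any commutative semigroup, and for $e,f\in E(S)$ square-linearity forces $ef\in\{e^2,f^2\}=\{e,f\}$. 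Part (3) then follows from the one-line identity $e_x\cdot x=x^3\cdot x=x^4=e_x$; since every $x\in H_{e_x}$ must satisfy $e_xx=x$, this forces $x=e_x\in E(S)$, so $C(S)\subseteq E(S)$, and the reverse inclusion is trivial.

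For (4), the assumption $x^2,y^2\in E(S)$ means $x^2=e_x$, $y^2=e_y$, so $xy\in\{e_x,e_y\}$ while $(xy)^2=x^2y^2=e_xe_y\in\{e_x,e_y\}$; a two-case check (in either case $(xy)^2$ equals $xy$, which must therefore equal $e_xe_y$) pins down $xy=e_xe_y$. For (6), suppose $x^2\ne e_x$ and $e\in E(S)$; the option $xe=x^2$ is refuted by computing $(xe)^2$ two ways, namely $(xe)^2=x^4=e_x$ and $(xe)^2=x^2e^2=x^2e=(xe)e=x^2$, which would give $x^2=e_x$. Hence $xe=e$ for all $e\in E(S)$, and then $e\cdot e_x=ex^3=(ex)x^2=ex^2=(ex)x=ex=e$ shows $e\le e_x$ in the semilattice $E(S)$, so $e_x$ is the largest element.

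The substance lies in (5), which I would reduce to an auxiliary lemma $(ab)^2=e_ae_b$ for all $a,b\in S$. This is the same case analysis as (4) with $a^2,b^2$ in place of $a,b$: $(ab)^2=a^2b^2\in\{a^4,b^4\}=\{e_a,e_b\}$ and $((ab)^2)^2=a^4b^4=e_ae_b$, so $(ab)^2=e_ae_b$. Consequently $w:=xy$ always satisfies $w^2=e_w=e_xe_y\in E(S)$, and $xyz=wz\in\{w^2,z^2\}=\{e_xe_y,z^2\}$. If $z^2\in E(S)$, part (4) applied to $w,z$ directly gives $wz=e_we_z=e_xe_ye_z$. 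If $z^2\notin E(S)$, part (6) makes $e_z$ the largest idempotent, so $e_xe_ye_z=e_xe_y$, and it remains to rule out $wz=z^2$. The trick is to use commutativity to reorder: also $xyz=(yz)x\in\{(yz)^2,x^2\}=\{e_ye_z,x^2\}$ and $xyz=(xz)y\in\{e_xe_z,y^2\}$, and since $z^2\notin E(S)$ it cannot equal the idempotents $e_ye_z$ or $e_xe_z$, forcing $z^2=x^2=y^2$. But then $xy\in\{x^2,y^2\}=\{z^2\}$, so $xy=z^2$ and $xyz=z^2\cdot z=z^3=e_z\ne z^2$, a contradiction. Orchestrating these three commutative regroupings of $xyz$ to corner the stray value $z^2$ is the main obstacle.
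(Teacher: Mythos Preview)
Your proof is correct, but the route through parts (5) and (6) is genuinely different from the paper's.

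The paper proves (5) before (6). For (5) it argues directly: assuming without loss of generality $xy=x^2$, it splits on whether $xz=x^2$ or $xz=z^2$ and in either case obtains $xyz\in\{x^3,z^3\}\subset E(S)$; the conclusion $xyz=e_xe_ye_z$ then follows from the homomorphism $e_*:S\to E(S)$. Part (6) is then deduced from (5) by noting that $xe=xee\in E(S)$ forces $xe\ne x^2$ once $x^2\notin E(S)$. You instead prove (6) first, via the pleasant two-way computation of $(xe)^2$ that avoids any appeal to (5), and then turn around and use (6) together with your auxiliary identity $(ab)^2=e_ae_b$ to attack (5). Your (5) is longer---the contradiction argument pitting the three regroupings $(xy)z$, $(yz)x$, $(xz)y$ against each other to exclude $xyz=z^2$---but it is self-contained and does not invoke the homomorphism property of $e_*$, which the paper imports from its Section~2.

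What each approach buys: the paper's (5) is a short two-case check and its (6) is a corollary; your (6) is a short two-way computation standing on its own, at the cost of a more elaborate (5). Your auxiliary lemma $(ab)^2=e_ae_b$ is a nice encapsulation that the paper does not isolate, and your avoidance of the $e_*$-homomorphism makes the argument slightly more elementary. Either ordering is fine; just be aware that if you present the items in the statement's order $(5)$ then $(6)$, you should flag that the proof of (5) uses (6), or else reorder the items.
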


\begin{proof}
1. It follows from $x^3=x\cdot x^2\in\{x^2,x^4\}$ that $x^3=x^4=e_x$ and hence $x^3=x^n$ for all $n\ge 3$. So, the monogenic semigroup $\{x^n\}_{n\in\IN}=\{x,x^2,x^3\}$ is finite and hence $S$ is periodic.
\smallskip

2. If $x,y$ are idempotents, then $xy\in\{x^2,y^2\}=\{x,y\}$ implies that the semilattice $E(S)$ is linear.
\smallskip

3. The identity $x^3=x^4$ implies that each subgroup of $S$ is trivial and hence $C(S)=E(S)$.
\smallskip

4. If $x^2,y^2\in E(S)$, then $x^4=x^2$ and hence $x^2=x^4=x^3=e_x$. Then $xy\in\{x^2,y^2\}=\{e_x,e_y\}$ implies that $xy$ is an idempotent and hence $xy=e_{xy}=e_x\cdot e_y$ as the projection $e_*:S\to E(S)$ is a homomorphism.
\smallskip

5. First we show that $xyz\in E(S)$. Since $S$ is square-linear, we get $xy\in\{x^2,y^2\}$. We lose no generality assuming that $xy=x^2$. Now consider the product $xz\in\{x^2,z^2\}$. If $xz=x^2$, then $xyz=x^2z=x(xz)=x^3\in E(S)$. If $xz=z^2$, then $xyz=x^2z=xxz=xz^2=xzz=z^2z=z^3\in E(S)$. Since the projection $e_*:S\to E(S)$ is a homomorphism, we conclude that $xyz=e_{xyz}=e_xe_ye_z$.
\smallskip

6. Assume that $x^2\notin E(S)$ but the idempotent $e_x$ is not maximal in the linear semilattice $E(S)$. Then there is an idempotent $e\in E(S)$ such that $ee_x=e_x\ne e$. It follows that
$xe\in\{x^2,e^2\}$. We claim that $xe\ne e$. Assuming that $xe=e$, we conclude that $xee=ee=e$.
On the other hand, the preceding item guarantees that $xee=e_xe_ee_e=e_xee=e_x\ne e$. So, $xe=x^2\notin E(S)$, which contradicts $xe=xee\in E(S)$.
\end{proof}

Each square-linear semigroup $S$ endowed with the set of directed edges
$$\Delta=\{(x,y)\in S\times S:xy=x^2\}$$becomes a semicomplete digraph.
In fact, the algebraic structure of a square-linear semigroup $S$ is completely determined by its digraph structure $\Delta$ and the duplication map $S\to S$, $x\mapsto x^2$. The semigroup operation $S\times S\to S$, $(x,y)\mapsto xy$, can be recovered from $\Delta$ and the duplication map  by the formula
$$xy=\begin{cases}
x^2&\mbox{if $(x,y)\in\Delta$},\\
y^2&\mbox{if $(y,x)\in\Delta$}.
\end{cases}
$$

\section{Commutativity of the semigroups $\beta(X)$, $\varphi(X)$ and $\varphi^\bullet(X)$}\label{s6}

The following characterization was proved in Theorem 4.27 of \cite{HS}.

\begin{theorem}\label{t:beta} For a commutative semigroup $X$ the following conditions are equivalent:
\begin{enumerate}
\item the semigroup $\beta(X)$ is commutative;
\item $\{a_kb_n:k,n\in\w,\;k<n\}\cap\{b_ka_n:k,n\in\w,\;k<n\}\ne\emptyset$
for any sequences $(a_n)_{n\in\w}$ and $(b_n)_{n\in\w}$ in $X$.
\end{enumerate}
\end{theorem}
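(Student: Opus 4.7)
The plan is to work directly with the Ellis product formula, which for $p, q \in \beta(X)$ and $A \subseteq X$ reads
$$A \in p \cdot q \iff \{x \in X : \{y \in X : xy \in A\} \in q\} \in p,$$
and to prove each implication by unrolling this formula one level.

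For $(1)\Rightarrow(2)$, I would fix arbitrary sequences $(a_n)_{n \in \w}$ and $(b_n)_{n \in \w}$ in $X$ together with a nonprincipal ultrafilter $\U$ on $\w$, and form the ultrafilter limits $p = \U\text{-}\lim_n \la a_n\ra$ and $q = \U\text{-}\lim_n \la b_n \ra$ in $\beta(X)$. Setting $S := \{a_k b_n : k<n\}$, for every $k \in \w$ the set $\{y \in X : a_k y \in S\}$ contains the cofinitely-indexed set $\{b_n : n > k\}$, which belongs to $q$; hence $\{x \in X : \{y : xy \in S\} \in q\} \in p$, i.e.\ $S \in p \cdot q$. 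By the symmetric argument $T := \{b_k a_n : k < n\} \in q \cdot p$. Assuming (1), we have $p \cdot q = q \cdot p$, so both $S$ and $T$ belong to this common ultrafilter, and therefore $S \cap T \ne \emptyset$.

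For the converse I would argue by contraposition. If $\beta(X)$ is not commutative, pick $p, q \in \beta(X)$ and $A \subseteq X$ with $A \in p \cdot q$ but $A \notin q \cdot p$, and set
$$B := \{x \in X : \{y : xy \in A\} \in q\} \in p, \qquad C := \{y \in X : \{x : yx \notin A\} \in p\} \in q,$$
where membership of $C$ in $q$ uses that $p$ is an ultrafilter. I would then construct sequences $(a_n)_{n\in\w} \subset B$ and $(b_n)_{n\in\w} \subset C$ by simultaneous induction: at stage $n$, pick $a_n$ in $B \cap \bigcap_{i<n}\{x : b_i x \notin A\}$, which lies in $p$ because each $b_i \in C$; then pick $b_n$ in $C \cap \bigcap_{i<n}\{y : a_i y \in A\}$, which lies in $q$ because each $a_i \in B$. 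Both sets are non-empty by the closure of ultrafilters under finite intersections. By construction $a_k b_n \in A$ and $b_k a_n \notin A$ whenever $k < n$, so the two sets in (2) are disjoint, violating (2).

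The main delicacy is the bookkeeping of which auxiliary set belongs to $p$ and which to $q$; once $B$ and $C$ are defined with the correct dualisation between the two orders of multiplication, the inductive selections automatically stay inside the relevant ultrafilter, and both directions reduce to one-level unrolling of the Ellis formula. Notably the commutativity of $X$ is not actually invoked in the argument, which is consistent with \cite{HS} stating the result for arbitrary semigroups; here it is merely inherited from the standing assumption.
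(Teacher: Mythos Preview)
Your argument is correct in both directions; the ultrafilter limits along a nonprincipal $\U$ on $\w$ give exactly the witnesses needed for $(1)\Rightarrow(2)$, and the inductive back-and-forth using the dual sets $B\in p$ and $C\in q$ cleanly produces the disjoint index sets for the contrapositive of $(2)\Rightarrow(1)$.

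There is nothing to compare against, however: the paper does not prove Theorem~\ref{t:beta} at all, but simply records it as Theorem~4.27 of Hindman--Strauss \cite{HS}. What you have written is essentially the standard proof found there, so your proposal supplies precisely the argument the paper chose to omit by citation. Your closing remark that commutativity of $X$ is not used is also accurate and matches the generality in \cite{HS}.
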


\begin{corollary}\label{c:beta} If the semigroup $\beta(X)$ is commutative, then
\begin{enumerate}
\item for each square-linear subsemigroup $S\subset X$ the set $\{x^2:x\in S\}$ is finite;
\item each subsemigroup of $X$ contains a finite ideal;
\item each monogenic subsemigroup of $X$ is finite.
\end{enumerate}
\end{corollary}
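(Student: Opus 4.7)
The plan is to prove each of (1), (2), (3) by contrapositive, using Theorem \ref{t:beta}: if $\beta(X)$ is commutative, then for every pair of sequences $(a_n),(b_n)\subset X$, the sets $\{a_kb_n:k<n\}$ and $\{b_ka_n:k<n\}$ must meet; I will exhibit sequences for which these sets are disjoint, yielding the desired contradiction.

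For part (1), start with a square-linear subsemigroup $S\subset X$ whose squares form an infinite set, and pick $(x_n)\subset S$ with pairwise distinct $x_n^2$. The relation $x_kx_n\in\{x_k^2,x_n^2\}$ gives a $2$-coloring of pairs $\{k,n\}$; Ramsey's theorem produces an infinite monochromatic subsequence on which, say, $x_kx_n=x_k^2$ for $k<n$ (the other color is symmetric). Setting $a_n=x_{2n}$, $b_n=x_{2n+1}$ yields $a_kb_n=x_{2k}^2$ and $b_ka_n=x_{2k+1}^2$ --- two disjoint sets of pairwise distinct squares. For part (3), if $x\in X$ generates an infinite monogenic subsemigroup, put $a_n=x^{4^n}$ and $b_n=x^{2\cdot 4^n}$: for $k<n$ the exponent of $a_kb_n$ is $4^k+2\cdot 4^n$ and of $b_ka_n$ is $2\cdot 4^k+4^n$, and uniqueness of base-$4$ expansions forces disjointness because the digits $1$ and $2$ occupy swapped positions.

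For part (2), the delicate case, let $S\subset X$ be a subsemigroup with no finite ideal. By (3) every element of $S$ has finite order, so $S$ is periodic and $E(S)$ is a semilattice by Section \ref{s2}. Fix any $e\in E(S)$: since $eS$ is an ideal of $S$, it must be infinite. The key dichotomy is on the meet-semilattice $E(eS)$. If it fails the descending chain condition, an infinite strictly descending chain $e_0>e_1>\cdots$ of idempotents gives disjoint product sets via $a_n=e_{2n}$, $b_n=e_{2n+1}$, since then $a_kb_n=e_{2n+1}$ and $b_ka_n=e_{2n}$ for $k<n$. Otherwise $E(eS)$ admits a minimum $f$ (minimal elements of a meet-semilattice coincide), in which case $fS$ coincides with the maximal subgroup $H_f$; this is an ideal of $S$, hence infinite by hypothesis, and by (3) it is torsion abelian. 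The structure theorem for infinite abelian torsion groups shows $H_f$ contains either an infinite direct sum $\bigoplus_m C_m$ of non-trivial cyclic subgroups or a Pr\"ufer subgroup $\IZ/p^\infty$. In the first sub-case, pick non-zero $g_m\in C_m$ and set $a_n=g_{2n}$, $b_n=g_{2n+1}$: the supports $\{2k,2n+1\}$ and $\{2k+1,2n\}$ of the two product sets are distinguished by parity. In the second sub-case, take $a_n,b_n$ of strictly differing orders (say $a_n$ of order $p^{4n+1}$ and $b_n$ of order $p^{4n+3}$): then for $k<n$, $a_kb_n$ has order $p^{4n+3}$ while $b_ka_n$ has order $p^{4n+1}$, so the two product sets consist of elements of different orders and are disjoint.

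The main obstacle will be the final reduction in part (2) to the structure of infinite abelian torsion groups and the two ensuing constructions; parts (1), (3) and the semilattice sub-case of (2) are comparatively short once the Ramsey coloring, the base-$4$ representation, and the descending chain are identified.
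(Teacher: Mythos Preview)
Your argument is correct. Part~(1) matches the paper's Ramsey argument essentially verbatim. The interesting divergence is in (2) and (3): the paper proves (2) in one line by citing Corollary~2.23 of \cite{HLS} (commutativity of $\beta(S)$ forces a unique minimal left ideal, and that reference then yields a finite ideal in $S$), and derives (3) from (2) immediately since a monogenic semigroup with a finite ideal is finite. You reverse the dependency: you prove (3) directly with a neat base-$4$ construction, then use periodicity to attack (2) by reducing to an infinite torsion abelian group $H_f$ sitting as an ideal, and build the witnessing sequences there. Your route has the virtue of being self-contained (no appeal to the external result in \cite{HLS}) at the price of considerable length, and one step is under-justified: the dichotomy ``infinite direct sum of nontrivial cyclics or Pr\"ufer subgroup'' for infinite abelian torsion groups is true but is not a single named ``structure theorem'' --- it follows from the primary decomposition together with the fact that an abelian $p$-group with finite $p$-socle splits as a finite group direct sum a divisible group (this last fact itself takes a short argument). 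If you want to keep your approach fully self-contained you should either supply that argument, or note that in any infinite abelian group one can build the required sequences by a straightforward greedy induction (at each stage only finitely many values are forbidden), which sidesteps the structure theory entirely.
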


\begin{proof} Assume that the semigroup $\beta(X)$ is commutative.

1. Assume that $X$ contains a square-linear subsemigroup $S\subset X$ with infinite subset $\{x^2:x\in S\}$. Then there is a sequence  $\{x_n\}_{n\in\w}$ in $S$ such that $x_n^2\ne x_m^2$ for any $n\ne m$. Define a
2-coloring $\chi:[\w]^2\to \{0,1\}$ of the set
$[\w]^2=\{(n,m)\in\w^2:n<m\}$ letting
$$\chi(n,m)=
\begin{cases}
0&\mbox{if $x_nx_m=x^2_n$}\\
1&\mbox{if $x_nx_m=x^2_m$}.
\end{cases}
$$ By Ramsey's Theorem \cite{Ramsey} (see also \cite[Theorem 5]{GRS}), there is an infinite subset $\Omega\subset\w$ and a color $c\in\{0,1\}$ such that $\chi(n,m)=c$ for any pair $(n,m)\in[\w]^2\cap\Omega^2$. Let $\Omega=\{k_n:n\in\w\}$ be the increasing enumeration of the set $\Omega$. Then for the sequences $a_n=x_{k_{2n}}$ and $b_n=x_{k_{2n+1}}$, $n\in\w$, we get
$$\{a_kb_n\}_{k<n}\cap\{b_ka_n\}_{k<n}\subset\{a_k^2\}_{k\in\w}\cap \{b_k^2\}_{k\in\w}=\{x_{k_{2n}}^2\}_{n\in\w}\cap\{x_{k_{2n+1}}^2\}_{n\in\w}=\emptyset,$$
which implies that the semigroup $\beta(X)$ is not commutative according to Theorem~\ref{t:beta}.
\smallskip

2. Let $S$ be an infinite subsemigroup of $X$. Then the semigroup $\beta(S)\subset\beta(X)$ is commutative and hence contains at most one minimal left ideal. In this case Corollary 2.23 of \cite{HLS} guarantees that the semigroup $S$ contains a finite ideal.

3. By the preceding item, for every $x\in X$ the monogenic semigroup $\{x^k\}_{k\in\IN}$ contains a finite ideal and hence is finite.
\end{proof}

\begin{theorem}\label{t:varphi} For a commutative semigroup $X$ and the semigroup $\varphi(X)$ of filters on $X$ the following conditions are equivalent:
\begin{enumerate}
\item $\varphi(X)$ is commutative;
\item $\varphi(X)$ is supercommutative;
\item $\{a_kb_n\}_{k\le n}\cap\{b_na_{n+1}\}_{n\in\w}\ne\emptyset$ for any sequences $(a_n)_{n\in\w}$ and $(b_n)_{n\in\w}$ in $X$.
\end{enumerate}
\end{theorem}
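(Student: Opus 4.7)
The plan is to prove (2)$\Rightarrow$(1), (2)$\Leftrightarrow$(3), and (1)$\Rightarrow$(3) separately. The implication (2)$\Rightarrow$(1) is immediate because supercommutativity is stronger than commutativity by definition.

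For (2)$\Rightarrow$(3), I will fix arbitrary sequences $(a_n), (b_n)$ in $X$ and work with their tail filters $\A, \BB$ having bases $A_N = \{a_n : n\ge N\}$ and $B_N = \{b_n : n\ge N\}$. The set $C := \{a_kb_n : k\le n\}$ equals $\bigcup_k a_k\cdot B_k$, so $C\in\A*\BB$ by definition of the extended Ellis product. Supercommutativity gives $C\in\A\ostar\BB$, producing $N,M$ with $A_N\cdot B_M\subset C$. Taking $k = n+1$ for sufficiently large $n$ yields $a_{n+1}b_n = a_ib_j$ for some $i\le j$, and commutativity of $X$ rewrites this as $b_na_{n+1}\in\{a_kb_n : k\le n\}$, giving (3).

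For (3)$\Rightarrow$(2), I will argue by contrapositive. If $\varphi(X)$ is not supercommutative, pick $C\in\A*\BB\setminus\A\ostar\BB$ together with a witness $A\in\A$ and $\{B_a\}_{a\in A}\subset\BB$ satisfying $\bigcup_{a\in A}a\cdot B_a\subset C$. I will inductively construct $(a_n), (b_n)$: choose $a_0\in A$; given $a_0,\dots,a_n$, the intersection $B^{(n)} := \bigcap_{k\le n}B_{a_k}$ lies in $\BB$ because $\BB$ is closed under finite intersections, and since $A\cdot B^{(n)}\not\subset C$ one can select $a_{n+1}\in A$ and $b_n\in B^{(n)}$ with $a_{n+1}b_n\notin C$. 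Because $b_n\in B_{a_k}$ for each $k\le n$, we have $a_kb_n\in a_k\cdot B_{a_k}\subset C$; combined with $a_{n+1}b_n\notin C$ and commutativity of $X$, this yields $\{a_kb_n : k\le n\}\cap\{b_na_{n+1}\} = \emptyset$, violating (3).

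The main obstacle is (1)$\Rightarrow$(3). By contrapositive, suppose (3) fails for some $(a_n),(b_n)$. The tail filters give $C := \{a_kb_n : k\le n\}\in\A*\BB$, and (1) forces $C\in\BB*\A$, producing $M\in\w$ and a nondecreasing $h:\w\to\w$ with $b_n\cdot A_{h(n)}\subset C$ for $n\ge M$; however, failure of (3) only forces $h(n)\ge n+2$, not the desired $h(n)=n+1$. My plan is to close this gap via a Ramsey argument: color each pair $n<k$ in $\w$ by whether $a_kb_n\in\{a_ib_j : i\le j\}$, and extract an infinite monochromatic $\Omega$. In the homogeneous ``no'' case, passing to the subsequence indexed by $\Omega$ produces a \emph{strong} failure of (3) in which $a_kb_n\notin\{a_ib_j : i\le j\}$ for all subsequence pairs $n<k$; the function $h$ coming from (1) applied to the subsequence's tail filters would then force $a_kb_n\in\{a_ib_j : i\le j\}$ for arbitrarily large $k$, a direct contradiction. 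In the homogeneous ``yes'' case, either $\Omega$ already contains consecutive integers so that (3) reads off immediately, or a secondary Ramsey coloring recording whether $a_kb_n$ has the form $a_{p+1}b_p$ for some $p$ either exhibits (3) directly or refines the situation; iterating finitely many times returns to the strong-failure case. This Ramsey-based refinement is the technical heart of the argument.
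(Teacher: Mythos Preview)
Your arguments for $(2)\Rightarrow(1)$, $(2)\Rightarrow(3)$, and $(3)\Rightarrow(2)$ are correct; the last one matches the paper's proof essentially verbatim. The gap is in $(1)\Rightarrow(3)$. Your Ramsey scheme is incomplete in the homogeneous ``yes'' case (where $a_kb_n\in C$ for all $n<k$ in $\Omega$): passing to the subsequence indexed by $\Omega$ no longer produces a witness against (3), since now $b'_na'_{n+1}=a_{\omega_{n+1}}b_{\omega_n}\in C$ by the ``yes'' color and may perfectly well lie in $C'=\{a'_ib'_j:i\le j\}\subset C$. Your proposed ``secondary Ramsey coloring recording whether $a_kb_n$ has the form $a_{p+1}b_p$'' and ``iterating finitely many times'' are not spelled out, and there is no argument that this process terminates in the strong-failure case rather than cycling through ``yes'' cases indefinitely.

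The paper avoids all of this with a one-line trick: instead of taking the tail filter on \emph{both} sequences, take $\A$ to be the \emph{principal} filter $\la A\ra$ generated by the full set $A=\{a_n:n\in\w\}$, and only $\BB$ to be the tail filter on $(b_n)$. The set $C=\{a_kb_n:k\le n\}$ still lies in $\A*\BB$ (assign $B_{a}=\{b_n:n\ge k(a)\}\in\BB$ for each $a\in A$, where $k(a)$ is any index with $a_{k(a)}=a$). By (1), $C\in\BB*\A$, so there is $B\in\BB$ and a family $\{A_b\}_{b\in B}\subset\A$ with $\bigcup_{b\in B}bA_b\subset C$. But now every $A_b\in\A=\la A\ra$ contains \emph{all} of $A$, so $B\cdot A\subset C$. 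Since $B\in\BB$ contains some $b_m$, we get $b_ma_{m+1}\in B\cdot A\subset C$, which is exactly the nonempty intersection required by (3). Choosing the principal filter on one side is precisely what removes the ``$h(n)\ge n+2$'' obstacle you identified; no Ramsey argument is needed.
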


\begin{proof} We shall prove the implications $(2)\Ra(1)\Ra(3)\Ra(2)$.
The implication $(2)\Ra(1)$ is trivial.

\smallskip

$(1)\Ra(3)$ Assume that the semigroup $\varphi(X)$ is commutative and take any sequences $(a_n)_{n\in\w}$ and $(b_n)_{n\in\w}$ in $X$. Consider the filter $\A=\la A\ra$ generated by the set $A=\{a_n\}_{n\in\w}$ and the filter $\BB=\big\{B\subset X:\exists n\;\forall m\ge n\;\; b_m\in B\big\}$.
It follows that the set $C=\{a_kb_n\}_{k\le n}$ belongs to the product $\A*\BB$. Since the semigroup $\varphi(X)$ is commutative, $C\in\A*\BB=\BB*\A$ and hence there is a set $B\in\BB$ such that $BA\subset C$. By the definition of the filter $\BB$, the set $B$ contains some element $b_m$. Then $b_ma_{m+1}\in BA=AB\subset C$ and hence the intersection $\{a_kb_n\}_{k\le n}\cap\{b_na_{n+1}\}_{n\in\w}\ni b_ma_{m+1}$ is not empty.
\smallskip

$(3)\Ra(2)$ Assume that $\A*\BB\ne\A\ostar\BB$ for some filters
$\A,\BB\in\varphi(X)$. Then $\A*\BB\not\subset\A\ostar\BB$ and
some set $C\in\A*\BB$ does not belong to the filter $\A\ostar\BB$.
This means that $A*B\not\subset C$ for any sets $A\in\A$ and
$B\in\BB$. We lose no generality assuming that the set $C$ is of
the basic form $C=\bigcup_{a\in A}a*B_a$ for some
set $A\in\A$ and family $(B_a)_{a\in A}\in\BB^A$. Pick any point
$a_0\in A$ and consider the set $B_0=B_{a_0}\in\BB$. Since
$A*B_0\not\subset C$, there are points $b_0\in B_{0}$ and $a_1\in
A$ such that $a_1b_{0}\notin C$. Now consider the set $B_1=B_0\cap
B_{a_1}\in\BB$. Since $A*B_1\not\subset C$, there are points
$b_1\in B_1$ and $a_2\in A$ such that $a_2b_1\notin C$. Proceeding
by induction, for every $n\in\w$ we shall construct two sequences
of points $(a_n)_{n\in\w}$ and $(b_n)_{n\in\w}$ in $X$ such that
\begin{enumerate}
\item $a_n\in A$;
\item $b_n\in \bigcap_{i=0}^n B_{a_i}$;
\item $a_{n+1}b_n\notin C$
\end{enumerate}
for every $n\in\w$.

Observe that for each $i\le n$ we get $a_ib_n\in a_iB_{a_i}\subset C$ and hence $\{a_kb_n\}_{k\le n}\cap \{a_{n+1}b_n\}_{n\in\w}\subset C\cap (X\setminus C)=\emptyset$.
\end{proof}

\begin{proposition}\label{p6.4}
For each commutative semigroup $X$ the semigroup
$\varphi^\bullet(X)$ is supercommutative. Moreover,
$\A*\BB=\A\ostar\BB$ for each $\A\in\upsilon^\bullet(X)$,
$\BB\in\varphi(X)$.
\end{proposition}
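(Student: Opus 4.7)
The plan is to prove the ``moreover'' clause first, since the supercommutativity of $\varphi^\bullet(X)$ is a direct consequence. Throughout, recall that we always have $\A\ostar\BB\subset\A*\BB$, so I only need the reverse inclusion.

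\textbf{Step 1: The moreover clause.} Fix $\A\in\upsilon^\bullet(X)$ and $\BB\in\varphi(X)$. Write $\A=\la F_1,\dots,F_n\ra$ for some non-empty finite sets $F_i\subset X$. Take any $C\in\A*\BB$; I may assume $C$ is a basic element, namely $C=\bigcup_{a\in A}a*B_a$ for some $A\in\A$ and some family $(B_a)_{a\in A}\subset\BB$. Because $A\in\A=\la F_1,\dots,F_n\ra$, some $F_i\subset A$; write $F_i=\{a_1,\dots,a_k\}$. Here I use the two hypotheses in a crucial way: the finite support of $\A$ gives me a \emph{finite} sub-selection $F_i$ of indices, and the filter property of $\BB$ then lets me form the single set
$$B=\bigcap_{j=1}^{k}B_{a_j}\in\BB.$$
Now $F_i*B=\bigcup_{j=1}^k a_j*B\subset\bigcup_{a\in A}a*B_a\subset C$ with $F_i\in\A$ and $B\in\BB$, so $C\in\la F_i*B\ra\subset\A\ostar\BB$. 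Hence $\A*\BB\subset\A\ostar\BB$, giving equality.

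\textbf{Step 2: Supercommutativity of $\varphi^\bullet(X)$.} Let $\A,\BB\in\varphi^\bullet(X)$. Both upfamilies belong simultaneously to $\upsilon^\bullet(X)$ and $\varphi(X)$, so the moreover clause applies in both orders. Combining this with the trivial identity $\A\ostar\BB=\BB\ostar\A$ (which holds because the underlying operation on $X$ is commutative, and hence $A*B=B*A$ for all $A,B\subset X$), we get
$$\A*\BB\;=\;\A\ostar\BB\;=\;\BB\ostar\A\;=\;\BB*\A,$$
which is precisely supercommutativity.

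\textbf{Main obstacle.} There is no deep obstacle; the essence is the single-line observation that the filter axiom permits the finite intersection $B=\bigcap_{j=1}^k B_{a_j}$ in Step 1, and the finite-support hypothesis on $\A$ is exactly what guarantees $k$ is finite. The only thing to be careful about is verifying that the general element of $\A*\BB$ can be taken of the basic form $\bigcup_{a\in A}a*B_a$ (which is precisely how the extended operation was defined in formula~(\ref{formula})), and that an element $C$ lies in $\A\ostar\BB$ as soon as it contains some set $F*B$ with $F\in\A$ and $B\in\BB$ (which follows directly from the definition of the generated upfamily).
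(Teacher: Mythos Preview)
Your proof is correct and takes essentially the same approach as the paper: reduce to a basic element $C=\bigcup_{a\in A}a*B_a$, pass to a finite index set, and use the filter property of $\BB$ to intersect the finitely many $B_a$'s. The only cosmetic difference is that the paper assumes directly that $A\in\A$ is finite (valid since $\A\in\upsilon^\bullet(X)$), whereas you first pick a finite generator $F_i\subset A$; the effect is the same.
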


\begin{proof} It is sufficient to prove that $\A*\BB\subset\A\ostar\BB$ for
each $\A\in\upsilon^\bullet(X)$, $\BB\in\varphi(X)$. Let
$C\in\A*\BB$. We lose no generality assuming that the set $C$ is
of the basic form $C=\bigcup_{a\in A}a*B_a$ for some finite set
$A\in\A$ and a family $(B_a)_{a\in A}\in\BB^A$. Since the set $A$
is finite, by definition of a filter, the intersection
$\bigcap_{a\in A}B_a$ is nonempty and belongs to $\BB$. Hence
$C\supset \bigcup_{a\in A}a*\big(\bigcap_{a\in A}B_a\big)=A*\big(\bigcap_{a\in
A}B_a\big)\in\A\ostar\BB$.
\end{proof}

\begin{problem} Characterize semigroups $X$ whose Stone-\v Cech extension $\beta(X)$ is supercommutative.
\end{problem}

\section{The (super)commutativity of the semigroups $\upsilon(X)$ and $\upsilon^\bullet(X)$}\label{s7}

In this section we shall characterize semigroups $X$ whose extensions $\upsilon^\bullet(X)$ and $\upsilon(X)$ are commutative or supercommutative.
The characterization will be given in terms of square-linear semigroups $X$ endowed with the digraph structure $$\Delta=\{(x,y)\in X\times X:xy=x^2\}.$$


\begin{theorem}\label{t:upsilon-bul} For a commutative semigroup $X$ the following conditions are equivalent:
\begin{enumerate}
\item the semigroup $\upsilon^\bullet(X)$ is commutative;
\item $\upsilon^\bullet(X)$ is supercommutative;
\item $\A*\BB^\perp=\BB^\perp*\A$ for any filters $\A,\BB\in\varphi^\bullet(X)\subset\upsilon^\bullet(X)$;
\item $\A*\BB=\A\ostar \BB$ for any upfamilies $\A\in\upsilon^\bullet(X)$ and $\BB\in\upsilon(X)$;
\item $\{xu,yv\}\cap\{xv,yu\}\ne\emptyset$ for any points $x,y,u,v\in X$;
\item $X$ is a square-linear semigroup whose digraph $(X,\Delta)$ contains no bipartite  cycles.
\end{enumerate}
\end{theorem}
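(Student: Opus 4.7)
The plan is to establish the equivalences via the cyclic chain $(2)\Ra(1)\Ra(3)\Ra(5)\Ra(6)\Ra(4)\Ra(2)$. Three of the steps are essentially immediate. $(2)\Ra(1)$ is trivial. $(1)\Ra(3)$ holds because the transversal of a principal filter $\BB=\la B\ra\in\varphi^\bullet(X)$ is the upfamily $\BB^\perp=\la\{b\}:b\in B\ra\in\upsilon^\bullet(X)$, so $(3)$ is literally an instance of commutativity inside $\upsilon^\bullet(X)$. And $(4)\Ra(2)$ follows by applying $(4)$ to both orderings $(\A,\BB)$ and $(\BB,\A)$ for $\A,\BB\in\upsilon^\bullet(X)$ and invoking the automatic commutativity of $\ostar$ on the commutative semigroup $X$, yielding $\A*\BB=\A\ostar\BB=\BB\ostar\A=\BB*\A$, i.e., supercommutativity.

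For $(3)\Ra(5)$ I would argue by contrapositive: given $x,y,u,v\in X$ violating $(5)$, consider the filters $\A=\la\{x,y\}\ra$ and $\BB=\la\{u,v\}\ra$ in $\varphi^\bullet(X)$, with transversal $\BB^\perp=\la\{u\},\{v\}\ra$. The set $C=\{xu,yv\}$ lies in $\A*\BB^\perp$ via the Ellis-type formula with $A=\{x,y\}$, $B_x=\{u\}$, $B_y=\{v\}$. On the other hand, every basic generator of $\BB^\perp*\A$ has the form $\bigcup_{b\in B}bA_b$ with $A_b\supset\{x,y\}$, and hence contains one of $\{ux,uy\}$, $\{vx,vy\}$, or $\{ux,uy,vx,vy\}$; none of these fits inside $\{xu,yv\}$ under the failure of $(5)$, which blocks $C$ from belonging to $\BB^\perp*\A$. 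For $(5)\Ra(6)$: setting $u=x,v=y$ in $(5)$ and using commutativity gives $xy\in\{x^2,y^2\}$, so $X$ is square-linear. By Lemma~\ref{l:graph} it then remains to exclude pure $4$-cycles in $(X,\Delta)$; for such a cycle $x_0,x_1,x_2,x_3,x_0$ the choice $x=x_0,y=x_2,u=x_1,v=x_3$ makes $\{xu,yv\}=\{x_0^2,x_2^2\}$ and $\{yu,xv\}=\{x_1^2,x_3^2\}$, and purity of the four cycle edges forces these two pairs to be disjoint, contradicting $(5)$.

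The main work is $(6)\Ra(4)$. Since $\ostar\subseteq *$ always, it suffices to show $\A*\BB\subseteq\A\ostar\BB$ for $\A\in\upsilon^\bullet(X)$ and $\BB\in\upsilon(X)$. Decomposing $\A=\la F_1,\dots,F_n\ra$ into its minimal finite generators lets us reduce to $\A=\la F\ra$ for one finite $F\subset X$: given $C\supseteq\bigcup_{a\in F}aB_a$ with $B_a\in\BB$, the goal is to find $a_0\in F$ with $F\cdot B_{a_0}\subseteq C$. If no such $a_0$ exists, pick for each $a_0\in F$ elements $\phi(a_0)\in F\setminus\{a_0\}$ and $\psi(a_0)\in B_{a_0}$ with $\phi(a_0)\psi(a_0)\notin C$; as $\phi$ is a self-map of the finite set $F$, it admits a shortest cycle of length $k\ge 2$. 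Taking two consecutive vertices $a,a'$ of this cycle together with appropriate witnesses $b,b'$ (for $k=2$, $b=\psi(a)$ and $b'=\psi(a')$; for $k\ge 3$, $b=\psi(a^-)$ with $a^-$ the predecessor of $a$, so that minimality—exclusion of shortcuts—provides the required ``crossing'' memberships), one arrives at a configuration of four elements $\{a,a',b,b'\}$ in which two products among $\{ab,ab',a'b,a'b'\}$ lie in $C$ and two lie outside $C$, in a matching pattern. A case analysis on the square-linear dichotomy $xy\in\{x^2,y^2\}$ discards the two ``mixed'' sub-cases as immediately contradictory (both possibilities of the relevant product would lie in $C$) and in each of the two remaining sub-cases exhibits a pure $4$-cycle on $\{a,a',b,b'\}$, whose purity is witnessed by the in/out-of-$C$ dichotomy of the squares $a^2,a'^2,b^2,b'^2$; Lemma~\ref{l:graph} then yields a bipartite cycle, contradicting $(6)$. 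The main obstacle is this last case analysis, especially the selection of the correct quadruple $\{a,a',b,b'\}$ when $k\ge 3$ (where only one of the two ``crossing'' products is outside $C$, unlike the symmetric $k=2$ case), together with the verification that the four edges of the exhibited cycle are pure.
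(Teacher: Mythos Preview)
Your chain of implications matches the paper's, and the steps $(2)\Ra(1)\Ra(3)\Ra(5)\Ra(6)$ and $(4)\Ra(2)$ are correct and essentially identical to the paper's treatment.

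For $(6)\Ra(4)$ your route differs from the paper's. The paper builds an infinite sequence $a_0,b_0,a_1,b_1,\dots$ in $A$ with $a_kb_k\in C$ and $a_{k+1}b_k\notin C$, uses finiteness of $A$ to find indices $p<q<r$ with $a_p=a_q=a_r$, and then runs an induction along the sequence (splitting on whether $a_qb_q=a_q^2$ or $=b_q^2$) to exhibit a \emph{long} bipartite cycle $a_r,b_{r-1},\dots,a_q$ or $a_p,b_p,\dots,a_q$. You instead aim for a pure $4$-cycle directly, via a combinatorial argument on $F$.

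There is, however, a genuine gap in your $k\ge 3$ case. Once you fix $\phi$ as a \emph{function} $F\to F$, every cycle of $\phi$ is automatically chord-free (each vertex has out-degree~$1$), so ``minimality---exclusion of shortcuts'' carries no information: nothing forces $a'\cdot\psi(a^-)$ to lie in $C$ or outside it, and without that fourth membership your case analysis cannot start. The fix is to replace the function $\phi$ by the directed graph $G$ on $F$ with an edge $a\to a'$ whenever there exists $b\in B_a$ with $a'b\notin C$. This graph has no loops (since $a\cdot B_a\subset C$) and every vertex has an outgoing edge (since $F\cdot B_a\not\subset C$), so it contains a shortest directed cycle $a_0\to a_1\to\cdots\to a_{k-1}\to a_0$. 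Now minimality \emph{does} exclude chords: for $k\ge 3$ there is no edge $a_0\to a_2$, which means $a_2\cdot b\in C$ for \emph{every} $b\in B_{a_0}$. Taking $a=a_1$, $a'=a_2$, $b\in B_{a_0}$ a witness for $a_0\to a_1$, and $b'\in B_{a_1}$ a witness for $a_1\to a_2$, you obtain
\[
ab\notin C,\qquad ab'\in C,\qquad a'b\in C,\qquad a'b'\notin C,
\]
the same ``anti-diagonal'' pattern as in your $k=2$ case (with $C$ and its complement swapped), and the identical square-linear case analysis then yields a pure $4$-cycle on $\{a,a',b,b'\}$. With this correction your argument is complete and in fact shorter than the paper's inductive construction of a long bipartite cycle.
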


\begin{proof} We shall prove the implications $(4)\Ra(2)\Ra(1)\Ra(3)\Ra(5)\Ra(6)\Ra(4)$ among which the implications $(4)\Ra(2)\Ra(1)\Ra(3)$ are trivial.
\smallskip

$(3)\Ra(5)$ Assume that  $\{xu,yv\}\cap\{xv,yu\}=\emptyset$ for some points $x,y,u,v\in X$, and consider the filters $\A=\la\{x,y\}\ra$ and $\BB=\la\{u,v\}\ra$, which belong to the semigroup $\varphi^\bullet(X)$. It is easy to see that $\BB^\perp=\la\{u\},\{v\}\ra$.
Observe that $\{xu,yv\}\in\A*\BB^\perp$ and $\BB^\perp*\A=\la\{ux,uy\},\{vx,vy\}\ra$. Since $\{xu,yv\}\notin \la\{ux,uy\},\{vx,vy\}\ra$, we conclude that $\A*\BB^\perp\ne\BB^\perp*\A$.
\smallskip

$(5)\Ra(6)$ To show that the semigroup $X$ is square-linear, take any two points $a,b\in X$ and put $x=v=a$ and $y=u=b$. Then $\{ab\}=\{xu,yv\}\subset \{xv,yu\}=\{a^2,b^2\}$, which means that the semigroup $X$ is square-linear. Next, we show that its digraph $(X,\Delta)$ contains no bipartite cycle.
Assuming the converse and applying Lemma~\ref{l:graph}, we conclude that $X$ contains a pure cycle $x_0,x_1,x_2,x_3,x_4$ of length 4.
For every $0\le i\le 3$ the inclusion $(x_i,x_{i+1})\in\Delta\setminus\Delta^{-1}$ implies $x_ix_{i+1}=x_{i}^2\ne x_{i+1}^2$. Since $x_4=x_0$, we get $x_4x_1=x_0x_1=x_4^2\ne x_1^2$.
Then for the points  $x=x_1,y=x_3,u=x_2,v=x_4$, we get
$$\{xu,yv\}\cap\{uy,vx\}=\{x_1x_2,x_3x_4\}\cap\{x_2x_3,x_4x_1\}=\{x_1^2,x_3^2\}\cap\{x_2^2,x_4^2\}=\emptyset.$$
So, the condition (4) does not hold.
\smallskip

$(6)\Ra(4)$ Assume that the subgroup $X$ is square-linear, but $\A*\BB\ne\A\ostar\BB$ for some upfamilies $\A\in\upsilon^\bullet(X)$ and $\BB\in\upsilon(X)$. Then $\A*\BB\not\subset\A\ostar\BB$ and hence $C\notin \A\ostar\BB$ for some set $C\in\A*\BB$.
We lose no generality assuming that $C$ is of the basic form $C=\bigcup_{a\in A}a*B_a$ for some set $A\in\A$ and sets $B_a\in\BB$, $a\in A$. Since $\A\in\upsilon^\bullet(X)$, we can assume that the set $A$ is finite.

Taking into account that $C\notin\A\ostar\BB$, we conclude that $A*B_a\not\subset C$ for each $a\in A$. Choose any element $a_0\in A$. By induction, for every $k\in\w$ we shall choose points $b_k\in B_{a_k}$ and $a_{k+1}\in A$ with $a_{k+1}*b_{k}\notin C$ as follows.
Assume that for some $k\in\w$ a point $a_k\in A$ has been constructed. Consider the set $B_{a_k}*A=A*B_{a_k}\not\subset C$ and find two points $a_{k+1}\in A$ and $b_k\in B_{a_k}$ such that $b_{k}a_{k+1}\notin C$.

Since the set $A\supset\{a_k\}_{k\in\w}$ is finite, for some point $a\in A$ the set $\Omega=\{k\in\w:a_k=a\}$ is infinite. Fix any three numbers $p,q,r\in\Omega$ such that $1<p<p+1<q<q+1<r$. Since $X$ is a square-linear semigroup, $a_qb_q\in\{a_q^2,b_q^2\}$.

Now consider two cases.
\smallskip

(i) $a_qb_q=b_q^2$. In this case we shall show that
$$(b_{q+i},a_{q+i})\in\Delta\mbox{ and }(a_{q+i+1},b_{q+i})\in\Delta$$for every $i\in\w$. This will be proved by induction on $i\in\w$. If $i=0$, then the inclusion $(b_q,a_q)\in\Delta$ follows from the equality $a_{q}b_q=b_q^2$. Assume that for some $i\in\w$ we have proved that $(b_{q+i},a_{q+i})\in\Delta$, which is equivalent to $a_{q+i}b_{q+i}=b_{q+i}^2$. It follows from $b_{q+i}^2=a_{q+i}b_{q+i}\ne b_{q+i}a_{q+i+1}\in \{b_{q+i}^2,a_{q+i+1}^2\}$ that $b_{q+i}a_{q+i+1}=a_{q+i+1}^2$ and hence $(a_{q+i+1},b_{q+i})\in\Delta$. Taking into account that
$a_{q+i+1}^2=b_{q+i}a_{q+i+1}\ne a_{q+i+1}b_{q+i+1}\in\{a_{q+i+1}^2,b_{q+i+1}^2\}$, we see that $a_{q+i+1}b_{q+i+1}=b_{q+i+1}^2$ and $(b_{q+i+1},a_{q+i+1})\in\Delta$, which completes the inductive step.

Taking into account that $\{b_{q+i}^2\}_{i\in\w}=\{a_{q+i}b_{q+i}\}_{i\in\w}\subset \{a_kb_k\}_{k\in\w}\subset C$ and $\{a_{q+i+1}^2\}_{i\in\w}=\{b_{q+i}a_{q+i+1}\}_{i\in\w}\subset \{b_ka_{k+1}\}_{k\in\w}\subset X\setminus C$,
we conclude that $\{b_{q+i}^2\}_{i\in\w}\cap\{a^2_{q+i+1}\}_{i\in\w}=\emptyset$, which implies that $(b_{q+i},a_{q+j+1})\notin\Delta\cap\Delta^{-1}$ for every $i,j\in\w$.

Now we see that $a_r,b_{r-1},a_{r-1},\dots,b_q,a_q$ is a bipartite cycle in the digraph $(X,\Delta)$.
\smallskip

(ii) $a_qb_q=a_q^2$. In this case we shall show that
$$(a_{q-i},b_{q-i})\in\Delta\mbox{ and }(b_{q-i-1},a_{q-i})\in\Delta$$for every $0\le i<q$. This will be proved by induction on $i<q$. If $i=0$, then the inclusion $(a_q,b_q)\in\Delta$ follows from the equality $a_qb_q=a_q^2$. Assume that for some non-negative number $i<q-1$ we have proved that $(a_{q-i},b_{q-i})\in\Delta$, which is equivalent to $a_{q-i}b_{q-i}=a_{q-i}^2$. It follows from $a_{q-i}^2=a_{q-i}b_{q-i}\ne b_{q-i-1}a_{q-i}\in \{b_{q-i-1}^2,a_{q-i}^2\}$ that $b_{q-i-1}a_{q-i}=b_{q-i-1}^2$ and hence $(b_{q-i-1},a_{q-i})\in\Delta$. Taking into account that
$b_{q-i-1}^2=b_{q-i-1}a_{q-i}\ne a_{q-i-1}b_{q-i-1}\in\{a_{q-i-1}^2,b_{q-i-1}^2\}$, we see that $a_{q-i-1}b_{q-i-1}=a_{q-i-1}^2$ and $(a_{q-i-1},b_{q-i-1})\in\Delta$, which completes the inductive step.

Taking into account that $\{a_{q-i}^2\}_{i=0}^{q-1}=\{a_{q-i}b_{q-i}\}_{i=0}^{q-1}\subset \{a_kb_k\}_{k\in\w}\subset C$ and $\{b_{q-i-1}^2\}_{i=0}^{q-1}=\{b_{q-i-1}a_{q-i}\}_{i=0}^{q-1}\subset \{b_ka_{k+1}\}_{k\in\w}\subset X\setminus C$,
we conclude that $\{b_{q-i-1}^2\}_{i=0}^{q-1}\cap\{a^2_{q-i}\}_{i=0}^{q-1}=\emptyset$, which implies that $(b_{q-i-1},a_{q-j})\notin\Delta\cap\Delta^{-1}$ for every $0\le i,j<q$.

Now we see that $a_p,b_p,a_{p+1},b_{p+1},\dots,a_{q-1},b_{q-1},a_q$ is a bipartite cycle in the digraph $(X,\Delta)$.
\end{proof}


\begin{theorem}\label{t:upsilon} For a commutative semigroup $X$ the following conditions are equivalent:
\begin{enumerate}
\item the semigroup $\upsilon(X)$ is commutative;
\item $\upsilon(X)$ is supercommutative;
\item the semigroups $\upsilon^\bullet(X)$ and $\beta(X)$ are commutative;
\item $\A*\BB^\perp=\BB^\perp*\A$ for any filters $\A,\BB\in\varphi(X)$;
\item $\{a_nb_n\}_{n\in\w}\cap \{b_na_{n+1}\}_{n\in\w}\ne\emptyset$ for any sequences $(a_n)_{n\in\w}$ and $(b_n)_{n\in\w}$ in $X$.
\end{enumerate}
\end{theorem}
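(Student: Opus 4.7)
The plan is to prove the cycle $(2)\Ra(1)\Ra(3)\Ra(5)\Ra(2)$, supplemented with $(1)\Ra(4)\Ra(5)$ to close the equivalence with $(4)$. The trivial implications are: $(2)\Ra(1)$ by definition; $(1)\Ra(3)$ since $\upsilon^\bullet(X)$ and $\beta(X)$ are subsemigroups of $\upsilon(X)$; and $(1)\Ra(4)$ since filters and their transversals lie in $\upsilon(X)$.

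For $(5)\Ra(2)$ I follow the pattern of $(6)\Ra(4)$ in Theorem~\ref{t:upsilon-bul}, but in a cleaner form. Assuming some $\A,\BB\in\upsilon(X)$ satisfy $\A*\BB\ne\A\ostar\BB$, I pick a basic set $C=\bigcup_{a\in A}a*B_a\in\A*\BB\setminus\A\ostar\BB$ and inductively build sequences $(a_n)\subset A$ and $(b_n)$ with $b_n\in B_{a_n}$ and $a_{n+1}b_n\notin C$, using that $A*B_{a_n}\not\subset C$. Then $a_nb_n\in a_n*B_{a_n}\subset C$ while $b_na_{n+1}=a_{n+1}b_n\notin C$, so $\{a_nb_n\}_{n\in\w}\cap\{b_na_{n+1}\}_{n\in\w}=\emptyset$, contradicting (5). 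For $(4)\Ra(5)$, given sequences $(a_n),(b_n)$ in $X$, I apply $\A*\BB^\perp=\BB^\perp*\A$ to the principal filters $\A=\la\{a_n:n\in\w\}\ra$ and $\BB=\la\{b_n:n\in\w\}\ra$: the set $C=\{a_nb_n:n\in\w\}$ lies in $\A*\BB^\perp$ via the basic choice $E_{a_n}=\{b_n\}\in\BB^\perp$, hence in $\BB^\perp*\A$; a corresponding basic representative $\bigcup_{e\in E}e*A_e\subset C$ with $E\in\BB^\perp$ then contains some $b_k\in E\cap\{b_n\}_{n\in\w}$ with $\{a_n\}_{n\in\w}\subset A_{b_k}$, giving $b_ka_{k+1}\in C=\{a_nb_n\}_{n\in\w}$.

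The main substantive step is $(3)\Ra(5)$. Commutativity of $\upsilon^\bullet(X)$ gives, via Theorem~\ref{t:upsilon-bul}, that $X$ is square-linear with no bipartite cycle in its digraph $(X,\Delta)$; commutativity of $\beta(X)$ gives, via Corollary~\ref{c:beta}(1), that $\{x^2:x\in X\}$ is finite. Hence $a_nb_n,b_na_{n+1}\in\{a_n^2,b_n^2,a_{n+1}^2\}$ take values in a finite set, and assuming for contradiction that $\{a_nb_n\}_{n\in\w}\cap\{b_na_{n+1}\}_{n\in\w}=\emptyset$, a pigeonhole argument on the finite coloring $n\mapsto(a_n^2,b_n^2,a_{n+1}^2,a_nb_n,b_na_{n+1})$ yields an infinite index set $\Omega$ on which all five coordinates are constants $\alpha,\beta,\alpha',c,c'$ with $c\ne c'$. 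A case analysis on which of the two square-linear alternatives each of $a_nb_n$ and $b_na_{n+1}$ realises determines which edges between $a_n, b_n, a_{n+1}$ lie in $\Delta$; selecting four suitable vertices from distinct indices in $\Omega$, possibly combined with a further Ramsey refinement applied to the cross-index products $a_{n_i}b_{n_j}$ to fix their orientations in $\Delta$, then produces a pure $4$-cycle, which by Lemma~\ref{l:graph} contradicts the no-bipartite-cycle hypothesis. The execution of this case analysis---verifying in every configuration that the resulting edges are pure and assemble into a valid bipartite cycle---is the main obstacle.
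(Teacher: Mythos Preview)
Your organization of the implications and your arguments for $(5)\Ra(2)$ and $(4)\Ra(5)$ are correct and essentially match the paper's (the paper proves $(4)\Ra(5)$ by contrapositive, but the content is the same).

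The gap is in $(3)\Ra(5)$. Your plan is to extract from the commutativity of $\beta(X)$ \emph{only} the finiteness of $\{x^2:x\in X\}$ (Corollary~\ref{c:beta}(1)), then pigeonhole and Ramsey your way to a pure $4$-cycle in every case. This cannot work as stated. After your pigeonhole and a Ramsey pass on the cross products $a_{n_i}b_{n_j}$, one of the possible outcomes is that \emph{all} pure edges between the $a$'s and the $b$'s point the same way (e.g.\ $a_{n_i}b_{n_j}=a_{n_i}^2$ for $i<j$ and $b_{n_i}a_{n_j}=b_{n_i}^2$ for $i<j$). In that configuration there is no bipartite cycle among the chosen vertices at all: every pure edge strictly increases (or strictly decreases) the index, so no cycle can close. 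What \emph{does} fail in this subcase is precisely condition~(2) of Theorem~\ref{t:beta}: the sets $\{a_{n_i}b_{n_j}\}_{i<j}$ and $\{b_{n_i}a_{n_j}\}_{i<j}$ are disjoint, so $\beta(X)$ is not commutative. Thus the contradiction in this branch comes from the \emph{full} characterization of commutativity of $\beta(X)$, not from the finiteness corollary, and not from the no-bipartite-cycle hypothesis.

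The paper's argument avoids pigeonhole and Ramsey entirely. It first observes (using only the disjointness hypothesis $\{a_nb_n\}\cap\{b_na_{n+1}\}=\emptyset$ and square-linearity) that the edge pattern propagates along the \emph{whole} sequence: either $a_nb_n=a_n^2$ and $b_na_{n+1}=b_n^2$ for all $n$, or from some index onward $a_nb_n=b_n^2$ and $b_na_{n+1}=a_{n+1}^2$. In each case one then looks at a single cross product $a_ib_j$ (or $b_ia_j$) with $i<j$: if some cross product breaks the pattern, the resulting edge closes a bipartite cycle $a_i,b_i,a_{i+1},\dots,b_j,a_i$; if all cross products respect the pattern, then $\{a_ib_j\}_{i<j}\cap\{b_ia_j\}_{i<j}=\emptyset$ and Theorem~\ref{t:beta} is contradicted directly. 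So the dichotomy ``bipartite cycle vs.\ Theorem~\ref{t:beta}'' is built into the argument from the start, and no passage to subsequences is needed.
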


\begin{proof} We shall prove the implications $(2)\Ra(1)\Ra(4)\Ra(5)\Ra(2)$ and $(1)\Ra(3)\Ra(5)$.
\smallskip

The implications $(2)\Ra(1)\Ra(4)$ are trivial.
\smallskip

$(4)\Ra(5)$ Assume that there are sequences $A=\{a_n\}_{n\in\w}$ and $B=\{b_n\}_{n\in\w}$ in $X$ such that $\{a_nb_n\}_{n\in\w}\cap\{b_{n}a_{n+1}\}_{n\in\w}=\emptyset$. Consider the filters $\A=\la A\ra$ and $\BB=\la B\ra$. It follows that $\{b_n\}\in \BB^\perp=\{C\subset X:C\cap B\ne\emptyset\}$ for every $n\in\w$. Assume that $\A*\BB^\perp=\BB^\perp*\A$.

Since $\{a_nb_n\}_{n\in\w}\in\A*\BB^\perp=\BB^\perp*\A$, there is $k\in\w$ such that $b_k*A\subset \{a_nb_n\}_{n\in\w}$, which is not possible as $b_ka_{k+1}\notin\{a_nb_n\}_{n\in\w}$. So, $\A*\BB^\perp\ne \BB^\perp*\A$.
\smallskip

$(5)\Ra(2)$ Assume that $\A*\BB\ne\A\ostar\BB$ for some upfamilies $\A,\BB\in\upsilon(X)$. Then $\A*\BB\not\subset\A\ostar\BB$ and hence $C\notin \A\ostar\BB$ for some set $C\in\A*\BB$.
We lose no generality assuming that $C$ is of basic form $C=\bigcup_{a\in A}aB_a$ for some set $A\in\A$ and sets $B_a\in\BB$, $a\in A$.

Taking into account that $C\notin\A\ostar\BB$, we conclude that $B_a*A=A*B_a\not\subset C$ for each $a\in A$. Choose any elements $a_0\in A$. By induction, for every $k\in\w$ we can choose points $b_k\in B_{a_k}$ and  $a_{k+1}\in A$ such that $b_{k}a_{k+1}\notin C$. Then the sequences $(a_n)_{n\in\w}$ and $(b_n)_{n\in\w}$ have the required property $\{a_nb_n\}_{n\in\w}\cap\{b_na_{n+1}\}_{n\in\w}\subset C\cap(X\setminus C)=\emptyset$, which shows that (5) does not hold.
\smallskip

The implication $(1)\Ra(3)$ is trivial.
\smallskip

$(3)\Ra(5)$. Assume that the semigroups $\beta(X)$ and $\upsilon^\bullet(X)$ are commutative but $\{a_nb_n\}_{n\in\w}\cap\{b_{n}a_{n+1}\}_{n\in\w}=\emptyset$ for some sequences $(a_n)_{n\in\w}$ and $(b_n)_{n\in\w}$. By Theorem~\ref{t:upsilon-bul}, the semigroup $X$ is square-linear and its digraph $(X,\Delta)$ contains no bipartite cycles.

 Two cases are possible.
\smallskip

(i) $a_nb_n\ne b_n^2$ for all $n\in\w$, and then $a_nb_n=a_n^2$ for all $n\in\w$. Then for each $n\in\w$ we get $\{b_n^2,a_{n+1}^2\}\ni b_na_{n+1}\notin\{a_kb_k\}_{k\in\w}=\{a_k^2\}_{k\in\w}$ and hence $b_na_{n+1}=b_{n}^2$. Then $\{a_n^2\}_{n\in\w}\cap\{b_n^2\}_{n\in\w}=\{a_nb_n\}_{n\in\w}\cap\{b_na_{n+1}\}_{n\in\w}=\emptyset$.  If for every $i<j$ we get $a_{i}b_{j}=a^2_{i}$ and $b_{i}a_{j}=b_{i}^2$, then $\{a_{i}b_{j}\}_{i<j}\cap\{b_{i}a_{j}\}_{i<j}=\emptyset$ and the semigroup $\beta(X)$ is not commutative by Theorem~\ref{t:beta}. So, there are numbers $i<j$ such that $a_{i}b_{j}\ne a^2_{i}$ or $b_{i}a_{j}\ne b_{i}^2$.

If $a_{i}b_{j}\ne a^2_{i}$, then $a_{i}b_{j}=b_{j}^2$, and $a_i,b_i,a_{i+1},b_{i+1},\dots,a_j,b_j,a_i$ if a bipartite cycle in the digraph $(X,\Delta)$, which is not possible.

If $b_{i}a_{j}\ne b_{i}^2$, then $b_{i}a_{j}=a_j^2$, and then
$b_i,a_{i+1},b_{i+1},\dots,b_{j-1},a_j,b_i$ is a bipartite cycle in the digraph $(X,\Delta)$, which is not possible.
\smallskip

(ii) $a_mb_m=b_m^2$ for some $m\in\w$. Repeating the argument of the proof of the implication $(5)\Ra(3)$ of Theorem~\ref{t:upsilon-bul}, we can check that for every $i\in\w$ \  $a_{m+i}b_{m+i}=b_{m+i}^2\ne a_{m+i+1}^2=b_{m+i}a_{m+i+1}$ and hence $\{b_{m+i}^2\}_{i\in\w}\cap\{a_{m+i+1}^2\}_{i\in\w}\subset \{a_kb_k\}_{k\in\w}\cap\{b_ka_{k+1}\}_{k\in\w}=\emptyset$. If for every $i<j$ we get $a_{m+i}b_{m+j}=b^2_{m+j}$ and $b_{m+i}a_{m+j}=a_{m+j}^2$, then $\{a_{m+i}b_{m+j}\}_{i<j}\cap\{b_{m+i}a_{m+j}\}_{i<j}=\emptyset$ and the semigroup $\beta(X)$ is not commutative by Theorem~\ref{t:beta}. So, there are numbers $i<j$ such that $a_{m+i}b_{m+j}\ne b^2_{m+j}$ or $b_{m+i}a_{m+j}\ne a_{m+j}^2$.

If $a_{m+i}b_{m+j}\ne b^2_{m+j}$, then $a_{m+i}b_{m+j}=a_{m+i}^2$, and  $a_{m+i},b_{m+j},a_{m+j},\dots,b_{m+i},a_{m+i}$ is a bipartite cycle in the digraph $(X,\Delta)$, which is not possible.

If $b_{m+i}a_{m+j}\ne a_{m+j}^2$, then $b_{m+i}a_{m+j}=b_{m+i}^2$, and $b_{m+i},a_{m+j},\dots,b_{m+i+1},a_{m+i+1},b_{m+i}$ is a bipartite cycle in the digraph $(X,\Delta)$, which is a contradiction.
\end{proof}


\section{(Super)commutativity of semigroups $N_2^\bullet(X)$ and $N_2(X)$}\label{s8}

In this section we detect semigroups with (super) commutative extensions $N_2(X)$ or $N^\bullet_2(X)$.

\begin{theorem}\label{t:N2-bul} For a commutative semigroup $X$ the following conditions are equivalent:
\begin{enumerate}
\item the semigroup $N^\bullet_2(X)$ is commutative;
\item  $N^\bullet_2(X)$ is supercommutative;
\item $\{xu,yv\}\cap\{xv,yu,xw,yw\}\ne\emptyset$ for any points $x,y,u,v,w\in X$;
\item $\A*\BB=\A\ostar \BB$ for any upfamilies $\A\in N_2^\bullet(X)$ and $\BB\in N_2(X)$;
\item $\A*\BB=\BB*\A$ for any $\A\in \varphi^\bullet(X)$ and $\BB\in N_2^\bullet(X)$;
\item Either $X$ is a square-linear semigroup whose digraph $(X,\Delta)$ contains no bipartite cycles or else $X$ contains a 2-element subgroup $H$ such that $x^3\in H$ and $xy=x^3y^3$ for each points $x,y\in X$.
\end{enumerate}
\end{theorem}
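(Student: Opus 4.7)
The plan is to establish the cycle of implications $(4)\Ra(2)\Ra(1)\Ra(5)\Ra(3)\Ra(6)\Ra(4)$. The three initial steps are essentially formal: for $(4)\Ra(2)$ I shall apply condition (4) to both $(\A,\BB)$ and the swapped pair $(\BB,\A)$ (both legal since $N_2^\bullet(X)\subset N_2(X)$) and chain through the commutative operation $\ostar$ to obtain $\A*\BB=\A\ostar\BB=\BB\ostar\A=\BB*\A$, while $(2)\Ra(1)$ is trivial and $(1)\Ra(5)$ follows from $\varphi^\bullet(X)\subset N_2^\bullet(X)$.

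For $(5)\Ra(3)$, given arbitrary $x,y,u,v,w\in X$, I will probe (5) with the filter $\A=\la\{x,y\}\ra\in\varphi^\bullet(X)$ and the upfamily $\BB=\la\{u,v\},\{u,w\},\{v,w\}\ra$, which is linked since its three generators pairwise intersect, so $\BB\in N_2^\bullet(X)$. The basic set $C=x*\{v,w\}\cup y*\{u,w\}=\{xv,xw,yu,yw\}$ lies in $\A*\BB$, and (5) places $C$ in $\BB*\A$. Unfolding this membership, some basic set $\bigcup_{b\in B}b*A_b$ with $B\in\BB$ and $A_b\supset\{x,y\}$ sits inside $C$; enumerating the three minimal choices $B=\{u,v\},\{u,w\},\{v,w\}$ forces either $xu$ or $yv$ into $\{xv,yu,xw,yw\}$, which is exactly condition (3).

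The step $(3)\Ra(6)$ is the heart of the argument and branches along the two alternatives of (6). If $X$ is square-linear, I will forbid bipartite cycles in $(X,\Delta)$; by Lemma~\ref{l:graph} it suffices to rule out pure 4-cycles. Assuming such a cycle $x_0,x_1,x_2,x_3$ exists, I will apply (3) with the two diagonal quadruples $(x,y,u,v)=(x_1,x_3,x_2,x_0)$ and $(x_0,x_2,x_3,x_1)$, letting $w$ vary through $X$; using $xw\in\{x^2,w^2\}$ to simplify, the combined restrictions on every $w$ should clash with the distinctness of consecutive squares guaranteed by the purity of the cycle. Otherwise $X$ is not square-linear, and one can pick $a,b\in X$ with $c:=ab\notin\{a^2,b^2\}$; applying (3) to $(x,y,u,v)=(a,b,b,a)$ gives $aw=c$ or $bw=c$ for every $w\in X$, and further specialisations of (3) with $w$ ranging over elements like $c$, $a^2$, $b^2$, $ax$ and $bx$ should progressively collapse the multiplication, yielding a 2-element subgroup $H=\{e,h\}$ with $c=h$, $x^3\in H$ for every $x\in X$, and finally the global identity $xy=x^3y^3$.

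Finally, for $(6)\Ra(4)$, the first alternative feeds directly into Theorem~\ref{t:upsilon-bul}: its condition (4) yields the stronger identity $\A*\BB=\A\ostar\BB$ on all of $\upsilon^\bullet(X)\times\upsilon(X)$, which restricts to the required statement on $N_2^\bullet(X)\times N_2(X)$. For the second alternative, the map $\pi\colon X\to H$, $\pi(x)=x^3$, fixes $H$ pointwise (since $h^2=e$ forces $h^3=h$ for $h\in H$) and satisfies $xy=\pi(x)\pi(y)$, thereby presenting $X$ as a projection extension of $H$ in the sense of Section~\ref{s3}; Corollary~\ref{c:pext2} then reduces (4) for $X$ to the corresponding identity in the two-element group $H$, where $N_2^\bullet(H)=N_2(H)$ has only three elements and the required identity is verified by direct inspection. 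I expect the hardest step to be the non-square-linear branch of $(3)\Ra(6)$: converting the weak combinatorial condition (3) into the rigid algebraic identity $xy=x^3y^3$ demands careful bookkeeping across many specialisations of (3), together with an argument that every element of $X$ cubes into the prescribed 2-element subgroup $H$.
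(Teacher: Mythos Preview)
Your cycle of implications $(4)\Ra(2)\Ra(1)\Ra(5)\Ra(3)\Ra(6)\Ra(4)$ matches the paper exactly, and the steps $(4)\Ra(2)\Ra(1)\Ra(5)$ and $(6)\Ra(4)$ are handled the same way. In $(5)\Ra(3)$ you test with the ``triangle'' linked system $\BB=\la\{u,v\},\{u,w\},\{v,w\}\ra$, whereas the paper uses the smaller $\BB=\la\{u,w\},\{v,w\}\ra$; both choices work and the verification is the same.

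The only substantive difference is the dichotomy you choose in $(3)\Ra(6)$. The paper splits on whether $\upsilon^\bullet(X)$ is commutative: if so, Theorem~\ref{t:upsilon-bul} instantly gives the first alternative of (6); if not, Theorem~\ref{t:upsilon-bul} supplies a quadruple $x,y,u,v$ with $\{xu,yv\}\cap\{xv,yu\}=\emptyset$, which is then used to extract the $2$-element subgroup structure. You instead split on whether $X$ is square-linear. Your non-square-linear branch is essentially the paper's second case, since the witness $c=ab\notin\{a^2,b^2\}$ yields exactly such a quadruple via $(x,y,u,v)=(a,b,b,a)$, after which the paper's chain of specialisations of (3) (first $a^3=a^5$, then $E(X)$ linear with a least idempotent $e$, then $|H_e|=2$, then $E(X)=\{e\}$, then $a^2\in H_e$, then $ab\in H_e$) goes through verbatim with this particular quadruple. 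Your square-linear branch, by contrast, must do work the paper gets for free; but it succeeds more directly than your sketch suggests: for a pure $4$-cycle $x_0,x_1,x_2,x_3$ your second diagonal quadruple $(x,y,u,v)=(x_0,x_2,x_3,x_1)$ gives $\{xu,yv\}=\{x_1^2,x_3^2\}$ and $\{xv,yu\}=\{x_0^2,x_2^2\}$, and then the single choice $w=x_0$ already violates (3), since $x_0w=x_0^2$ and $x_2w\in\{x_2^2,x_0^2\}$ in a square-linear semigroup, neither of which lies in $\{x_1^2,x_3^2\}$. So no ``combined restrictions over all $w$'' are needed. The paper's route is slicker because it recycles Theorem~\ref{t:upsilon-bul}; yours is more self-contained but requires this extra computation.
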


\begin{proof} We shall prove the implications $(4)\Ra(2)\Ra(1)\Ra(5)\Ra(3)\Ra(6)\Ra(4)$ among which $(4)\Ra(2)\Ra(1)\Ra(5)$ are trivial.
\smallskip

To prove that $(5)\Ra(3)$, assume that $\{xu,yv\}\cap\{xv,yu,xw,yw\}=\emptyset$ for some points $x,y,u,v,w\in X$. Consider the filter $\A=\la\{x,y\}\ra$ and the linked upfamily $\BB=\la \{u,w\},\{v,w\}\ra$. By (5), $\A*\BB=\BB*\A$. Observe that the set $\{xv,xw,yu,yw\}=x\cdot\{v,w\}\cup
y\cdot\{u,w\}$ belongs to the upfamily $\A*\BB=\BB*\A$. Then either $\{u,w\}\cdot\{x,y\}\subset\{xv,xw,yu,yw\}$ or $\{v,w\}\cdot\{x,y\}\subset\{xv,xw,yu,yw\}$. None of the inclusions is possible as $xu,yv\notin \{xv,yu,xw,yw\}$.
\smallskip

$(3)\Ra(6)$ If the semigroup $\upsilon^\bullet(X)$ is commutative,
then by Theorem~\ref{t:upsilon-bul}, $X$ is a square-linear
semigroup whose digraph $(X,\Delta)$ contains no bipartite cycles.
So, we assume that the semigroup $\upsilon^\bullet(X)$ is not
commutative. Given any element $a\in X$, put $x=v=a$, $y=u=a^2$,
and $w=a^3$. Then the condition (3) implies
$xu=yv=a^3\in\{xv,yu,xw,yw\}=\{a^2,a^4,a^5\}$, which yields
$a^3=a^5$ for each $a\in X$.  So, the semigroup $X$ is periodic
and its set of idempotents $E=\{e\in X:e^2=e\}$ is not empty. We
claim that the semilattice $E$ is linear. Assuming the converse,
find two idempotents $x,y\in E$ with $xy\notin\{x,y\}=\{x^2,y^2\}$
and put $u=x$, $v=y$, $w=xy$. Then
$\{xu,yv\}\cap\{xv,yu,xw,yw\}=\{x^2,y^2\}\cap\{xy\}=\emptyset$,
which contradicts the condition (3).

Next, we show that the semilattice $E$ has the smallest element. Assume the opposite. Since the semigroup $\upsilon^\bullet(X)$ is not commutative, Theorem~\ref{t:upsilon-bul} yields four points $x,y,u,v\in X$ such that $\{xu,yv\}\cap\{xv,yu\}=\emptyset$. Consider the projection $e_*:X\to E,$ $e_*:x\mapsto e_x$, of $X$ onto its idempotent band. Since the linear semilattice $E$ does not have the smallest idempotent, there is an idempotent $w\in E$ such that $we_{xu}=w\ne e_{xu}$ and $we_{yv}=w\ne e_{yv}$. It follows that $e_{xw}=e_x\cdot e_w=w\ne e_{xu}$ and hence $xw\ne xu$. By analogy we can prove that $\{xu,yv\}\cap\{xw,yw\}=\emptyset$, which implies $\{xu,yv\}\cap\{xv,yu,xw,yw\}=\emptyset$ and contradicts (3).

Therefore, the semilattice $E$ has the smallest element, which will be denoted by $e$. We claim that the maximal group $H_e$ containing this idempotent is not trivial. It follows from $\{xu,yv\}\cap\{xv,yu\}=\emptyset$ and $\{xu,yv\}\cap\{xv,yu,xe,ye\}\ne\emptyset\ne\{xv,yu\}\cap\{xu,yv,xe,ye\}$ that the set $\{xe,ye\}$ contains two elements and lies in the maximal subgroup $H_e$ of the idempotent $e$.
So, the group $H_e$ is not trivial. The equality $a^3=a^5$ holding for each element $a\in X$ implies that $a^2=e$ for each element $a$ of the group $H_e$. We claim that $|H_e|=2$. In the other case, we could find three pairwise distinct points $a,b,ab\in H_e\setminus\{e\}$. Put $x=u=a$, $y=v=b$, and $w=e$. Then $\{xu,yv\}\cap\{xv,yu,xw,yw\}=\{e\}\cap\{ab,a,b\}=\emptyset$, which contradicts (3).

So, $H_e=\{e,h\}$ for some element $h\in H_e$. Next, we show that $e$ is the unique element of the semilattice $E$. Assume that $E$ contains some idempotent $f\ne e$ and consider the points $x=f$, $y=h$, $u=e$, $v=h$, $w=f$. Observe that $\{xu,yv\}\cap\{xv,yu,xw,yw\}=\{fe,h^2\}\cap\{fh,he,ff,hf\}=\{e,e\}\cap\{h,f\}=\emptyset$, which contradicts (3).

Next, we check that $a^2\in H_e$ for each $a\in X$. Assume conversely that $a^2\notin H_e$. It follows from $a^3=a^5$ that $a^4$ is an idempotent which coincides with $e$ and hence $a^3\in H_e$. If $a^3=e$, then we can consider the points $x=a$, $y=h$, $u=a^2$, $v=h$ and $w=a$. Then $\{xu,yv\}\cap\{xv,yu,xw,yw\}=\{a^3,h^2\}\cap\{ah,ha^2,a^2,ha\}=\{e\}\cap\{h,a^2\}=\emptyset$, which contradicts (2). So, $a^3=h$ and then $a^{2i+1}=h$ and $a^{2i+2}=e$ for all $i\in\IN$. Consider the points $x=a$, $y=a^2$, $u=a^3$, $v=a^2$, and $w=a$. Then $\{xu,yv\}\cap\{xv,yu,xw,yw\}=\{a^4\}\cap\{a^3,a^5,a^2,a^3\}=\emptyset$, which contradicts (3).

Finally, we show that $ab\in H_e$ for any points $a,b\in X$. Assuming that $ab\notin H_e$ for some $a,b\in X$, consider the points $x=a$, $y=b$, $u=b$, $v=a$, and $w=e$. Then $\{xu,yv\}\cap\{xv,yu,xw,yw\}=\{ab\}\cap\{a^2,b^2,ae,be\}\subset \{ab\}\cap H_e=\emptyset$, which contradicts (2). So, $ab\in H_e$, and then $ab=(ab)^3=a^3b^3$.
\smallskip

$(6)\Ra(4)$ If $X$ is a square-linear semigroup whose digraph
$(X,\Delta)$ contains no bipartite cycle, then by
Theorem~\ref{t:upsilon-bul}, $\A*\BB=\A\ostar \BB$ for any
upfamilies $\A\in \upsilon^\bullet(X)$ and $\BB\in \upsilon(X)$.
Now assume that $X$ contains a two-element subgroup $H\subset X$
such that $x^3\in H$ and $xy=x^3y^3$ for any points $x,y\in X$.
This means that for the projection $\pi:X\to H$, $\pi:x\mapsto
x^3$, the semigroup $X$ is a projection extension of the subgroup
$H$. Then the semigroup $N_2(X)$ is a projection extension of the
subsemigroup $N_2(H)$. Since $|H|=2$, by Proposition~\ref{p6.4},
the semigroup $N_2(H)=\varphi^\bullet(H)$ is supercommutative and
hence for any linked upfamilies $\A,\mathcal B\in N_2(X)$ we get
$$\A*\BB=\upsilon\pi(\A)*\upsilon\pi(\mathcal B)=\upsilon\pi(\A)\ostar\upsilon\pi(\mathcal B)=\A\ostar\mathcal B.$$
\end{proof}


\begin{theorem}\label{t:N2} For a semigroup $X$ the following conditions are equivalent:
\begin{enumerate}
\item the semigroup $N_2(X)$ is commutative;
\item $N_2(X)$ is supercommutative;
\item the semigroups $N_2^\bullet(X)$ and $\beta(X)$ are commutative;
\item $\A*\BB=\A\ostar \BB$ for any upfamilies $\A\in \varphi(X)$ and $\BB\in N_2(X)$;
\item for every sequence $(a_{i})_{i\in\w}\in X^\w$ and symmetric matrix $(b_{ij})_{i,j\in\w}\subset X^{\w\times\w}$ we get\newline $\{a_{i}\cdot b_{ij}\}_{i,j\in\w}\cap \{b_{ii}\cdot a_{i+1}\}_{i\in\w}\ne\emptyset$.
\item either the semigroup $\upsilon(X)$ is commutative or else $X$ contains a 2-element subgroup $H$ such that $x^3\in H$ and $xy=x^3y^3$ for each points $x,y\in X$.
\end{enumerate}
\end{theorem}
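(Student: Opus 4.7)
The plan is to close the cycle $(2)\Ra(1)\Ra(3)\Ra(6)\Ra(2)$ together with the cycle $(2)\Ra(4)\Ra(5)\Ra(2)$. The implications $(2)\Ra(1)$, $(1)\Ra(3)$, and $(2)\Ra(4)$ are immediate from the inclusions $\beta(X)\subset\varphi(X)\subset N_2(X)$ and the definition of supercommutativity.

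For the combinatorial heart of the argument, to prove $(5)\Ra(2)$ I would adapt the one-sequence construction of Theorem~\ref{t:upsilon}~$(5)\Ra(2)$ to a two-parameter construction calibrated to a linked upfamily. Assuming $\A,\BB\in N_2(X)$ witness $\A*\BB\ne\A\ostar\BB$, pick a basic set $C=\bigcup_{a\in A}aB_a\in(\A*\BB)\setminus(\A\ostar\BB)$; this forces $A\cdot B_a\not\subset C$ for every $a\in A$. Choose $a_0\in A$, and inductively, given $a_0,\dots,a_k\in A$, set $b_{kj}=b_{jk}$ to be any point of the nonempty intersection $B_{a_j}\cap B_{a_k}$ for $j<k$ (this is where linkedness of $\BB$ is used), and use $A\cdot B_{a_k}\not\subset C$ to pick $b_{kk}\in B_{a_k}$ and $a_{k+1}\in A$ with $b_{kk}a_{k+1}\notin C$. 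Every $b_{ij}$ then lies in $B_{a_i}$, so $a_ib_{ij}\in a_iB_{a_i}\subset C$, while $b_{kk}a_{k+1}\notin C$; consequently $\{a_ib_{ij}\}_{i,j\in\w}\cap\{b_{kk}a_{k+1}\}_{k\in\w}=\emptyset$, contradicting (5). Conversely, for $(4)\Ra(5)$, given sequences $(a_n)$ and a symmetric matrix $(b_{ij})$ violating (5), set $\A=\la\{a_n:n\in\w\}\ra\in\varphi(X)$ and $\BB=\la B_i:i\in\w\ra$ with $B_i=\{b_{ij}:j\in\w\}$; symmetry of the matrix gives $b_{ij}\in B_i\cap B_j$, so $\BB$ is linked. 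The set $C=\{a_ib_{ij}:i,j\in\w\}$ lies in $\A*\BB$ via the union $\bigcup_i a_iB_i$; if also $C\in\A\ostar\BB$, then $\{a_n:n\in\w\}\cdot B_{j_0}\subset C$ for some $j_0$, and the case $n=j_0+1$, $j=j_0$ yields $b_{j_0j_0}a_{j_0+1}\in C\cap\{b_{kk}a_{k+1}:k\in\w\}$, contrary to the hypothesis.

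The structural implications reduce to the preceding theorems. For $(3)\Ra(6)$, Theorem~\ref{t:N2-bul} applied to the commutative semigroup $N_2^\bullet(X)$ produces two alternatives: either $X$ is square-linear with no bipartite cycles in its digraph, or $X$ contains a two-element subgroup satisfying $xy=x^3y^3$. The second is exactly the second alternative of (6). In the first, Theorem~\ref{t:upsilon-bul} gives commutativity of $\upsilon^\bullet(X)$, which together with the commutativity of $\beta(X)$ hypothesized in (3) triggers Theorem~\ref{t:upsilon}~$(3)\Ra(1)$ and forces $\upsilon(X)$ to be commutative, which is the first alternative of (6). For $(6)\Ra(2)$: under the first alternative, Theorem~\ref{t:upsilon} upgrades commutativity of $\upsilon(X)$ to supercommutativity, which is inherited by the subsemigroup $N_2(X)$. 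Under the second alternative, the map $\pi:x\mapsto x^3$ is a homomorphic projection making $X$ a projection extension of the two-element subgroup $H$, and Corollary~\ref{c:pext2} reduces supercommutativity of $N_2(X)$ to that of $N_2(H)=\varphi^\bullet(H)$, which is provided by Proposition~\ref{p6.4}. The main bookkeeping challenge is the induction in $(5)\Ra(2)$: the linkedness of $\BB$ is exactly the resource that populates the off-diagonal of the symmetric matrix, freeing the diagonal to encode the failure of $\A\ostar\BB$ at the points $b_{kk}a_{k+1}$.
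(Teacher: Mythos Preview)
Your proposal is correct and follows essentially the same route as the paper: the same two cycles $(2)\Ra(1)\Ra(3)\Ra(6)\Ra(2)$ and $(2)\Ra(4)\Ra(5)\Ra(2)$, the same reduction of $(3)\Ra(6)$ and $(6)\Ra(2)$ to Theorems~\ref{t:upsilon-bul}, \ref{t:upsilon}, \ref{t:N2-bul} and the projection-extension machinery, and the same inductive construction in $(5)\Ra(2)$ using linkedness of $\BB$ to populate the off-diagonal entries $b_{ij}\in B_{a_i}\cap B_{a_j}$ while the diagonal encodes the failure $b_{kk}a_{k+1}\notin C$. The only cosmetic difference is that in $(3)\Ra(6)$ the paper argues contrapositively (assuming $\upsilon(X)$ is not commutative and deducing the second alternative), whereas you do a direct case split on the dichotomy in Theorem~\ref{t:N2-bul}(6); the logic is the same.
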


\begin{proof} It suffices to prove the implications $(2)\Ra(1)\Ra(3)\Ra(6)\Ra(2)$ and $(2)\Ra(4)\Ra(5)\Ra(2)$. In fact, the implications $(2)\Ra(1)\Ra(3)$ and $(2)\Ra(4)$ are trivial.
\smallskip

$(3)\Ra(6)$ Assume that the semigroups $N_2^\bullet(X)$ and $\beta(X)$ are commutative but the semigroup $\upsilon(X)$ is not commutative. By Theorem~\ref{t:upsilon}, the semigroup $\upsilon^\bullet(X)$ is not commutative. Combining Theorems~\ref{t:upsilon-bul} and \ref{t:N2-bul}, we conclude that $X$  contains a 2-element subgroup $H$ such that $x^3\in H$ and $xy=x^3y^3$ for each points $x,y\in X$.
\vskip5pt

$(6)\Ra(2)$ If $\upsilon(X)$ is commutative, then by Theorem~\ref{t:upsilon}, it is supercommutative and so is its subsemigroup $N_2(X)$. If  $X$  contains a 2-element subgroup $H$ such that $x^3\in H$ and $xy=x^3y^3$ for each points $x,y\in X$, then for the projection $\pi:X\to H$, $\pi:x\mapsto x^3$, the semigroup $X$ is a projection extension of the subgroup $H$. By Proposition~\ref{p3.1},
the semigroup $N_2(X)$ is a projection extension of the subsemigroup $N_2(H)$. Since $|H|=2$, the semigroup $N_2(H)=\varphi^\bullet(H)$ is supercommutative by Proposition~\ref{p6.4}. Being a projection extension of the supercommutative semigroup $N_2(H)$, the semigroup $N_2(X)$ is supercommutative by Corollary~\ref{c:pext2}.
\smallskip

$(4)\Ra(5)$ Assume that for some sequence $(a_{i})_{i\in\w}\in X^\w$ and some symmetric matrix $(b_{ij})_{i,j\in\w}\subset X^{\w\times\w}$ we get $\{a_{i} b_{ij}\}_{i,j\in\w}\cap \{b_{ii}a_{i+1}\}_{i\in\w}=\emptyset$. Consider the filter $\A=\la A\ra\in\varphi(X)\subset N_2(X)$ generated by the set $A=\{a_i\}_{i\in\w}$ and the linked system $\mathcal B$ generated by the family $\{B_i\}_{i\in\w}$ of sets $B_i=\{b_{ij}\}_{j\in\w}$, $i\in\w$. Observe that the set $C=\{a_ib_{ij}\}_{i,j\in\w}$ belongs to $\A*\mathcal B$. Assuming that $\A*\mathcal B=\A\ostar\mathcal B$, we would find a number $i\in\w$ such that $A*B_i\subset C$, which is not possible as $a_{i+1}b_{ii}\notin C$.
\smallskip

$(5)\Ra(2)$ Assuming that $\A*\mathcal B$ is not supercommutative, we could find two linked upfamilies  $\A,\mathcal B\in N_2(X)$ such that $\A*\mathcal B\not\subset \A\ostar\mathcal B$. Then for some set $A\in\A$ and a family $(B_a)_{a\in A}\in\mathcal B^A$, we get $\bigcup_{a\in A}aB_a\notin\A\ostar \mathcal B$. It follows that for every $a\in A$ the product $A*B_a$ is not contained in the set $C=\bigcup_{a\in A}a*B_a$, which allows us to construct inductively two sequences of points $(a_i)_{i\in\w}\subset A^\w$ and $(b_i)_{i\in\w}\in X^\w$ such that $b_i\in B_{a_i}$ and $a_{i+1}b_i\notin C$ for every $i\in\w$. For every numbers $i<j$ put $b_{ii}=b_i$ and let $b_{ij}=b_{ji}$ be some point of the intersection $B_{a_i}\cap B_{a_j}$ (which is not empty by the linkedness of the upfamily $\mathcal B$).
Then the sequence $(a_i)_{i\in\w}$ and the symmetric matrix $(b_{ij})_{i,j\in\w}$ have the required property $\{a_ib_{ij}\}_{i,j\in\w}\cap\{b_{ii}a_{i+1}\}\subset C\cap (X\setminus C)=\emptyset.$
\end{proof}

\section{The commutativity of the superextension $\lambda(X)$}\label{s9}

In this section we characterize semigroups having commutative extensions $\lambda(X)$ and $\lambda^\bullet(X)$.

\begin{theorem}\label{lambda-matrix} For a commutative semigroup $X$ the following conditions are equivalent:
\begin{enumerate}
\item the semigroup $\lambda(X)$ is commutative;
\item for any symmetric matrices $(a_{ij})_{i,j\in\w}, (b_{ij})_{i,j\in\w}\in X^{\w\times\w}$ we get  $\{a_{ii}\cdot b_{ij}\}_{i,j\in\w}\cap\{b_{ii}\cdot a_{i+1,j}\}_{i,j\in\w}\ne\emptyset$.
\end{enumerate}
\end{theorem}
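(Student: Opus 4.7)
The plan is to prove the equivalence along the lines of Theorem~\ref{t:N2}, replacing the sequence on the $\A$-side by a second symmetric matrix to reflect that in $\lambda(X)$ neither factor is a filter: both upfamilies are merely (maximally) linked, so only pairwise intersections are available.

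For $(1)\Ra(2)$ I proceed by contraposition. Given symmetric matrices $(a_{ij})$ and $(b_{ij})$ with $\{a_{ii}b_{ij}\}_{i,j\in\w}\cap\{b_{ii}a_{i+1,j}\}_{i,j\in\w}=\emptyset$, put $A_i=\{a_{ij}:j\in\w\}$, $B_i=\{b_{ij}:j\in\w\}$, $A'=\{a_{ii}:i\in\w\}$, $B'=\{b_{ii}:i\in\w\}$. Symmetry gives $a_{ij}=a_{ji}\in A_i\cap A_j$ and $a_{ii}\in A_i\cap A'$, so $\{A'\}\cup\{A_i:i\in\w\}$ is linked and extends by Zorn's lemma to some $\A\in\lambda(X)$; construct $\BB\in\lambda(X)$ analogously. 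For $C:=\{a_{ii}b_{ij}:i,j\in\w\}$ the witness $A'\in\A$ with the choice $a\mapsto B_i$ (for any $i$ with $a_{ii}=a$) shows $C\in\A*\BB$. If $C\in\BB*\A$ were to hold via some $B\in\BB$ and $(A^*_b)_{b\in B}\in\A^B$ with $\bigcup_{b\in B}bA^*_b\subset C$, then linkedness $B\cap B'\ne\emptyset$ would yield $k$ with $b_{kk}\in B$, and linkedness $A^*_{b_{kk}}\cap A_{k+1}\ne\emptyset$ would yield $j$ with $a_{k+1,j}\in A^*_{b_{kk}}$, forcing $b_{kk}a_{k+1,j}\in C$ in contradiction with the hypothesis. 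Hence $\A*\BB\ne\BB*\A$.

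For $(2)\Ra(1)$, again by contraposition, assume $\A,\BB\in\lambda(X)$ and $C\in\A*\BB\setminus\BB*\A$, with witness representation $\bigcup_{a\in A^*}aB^*_a\subset C$. Introduce the auxiliary sets
$$A^+=\{a\in X:\exists B\in\BB,\ aB\subset C\},\qquad B^-=\{b\in X:\forall A\in\A,\ bA\not\subset C\}.$$
Since $A^*\subset A^+$, we have $A^+\in\A$; and if $B^-$ failed to lie in $\BB=\BB^\perp$, some $B\in\BB$ would miss $B^-$, allowing a choice $A_b\in\A$ with $bA_b\subset C$ for every $b\in B$ and thereby placing $C\in\BB*\A$ against assumption. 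Using $\A=\A^\perp$ and $\BB=\BB^\perp$, for each $a\in A^+$ the set $M_a=\{b:ab\in C\}$ lies in $\BB$ (it contains a witness from $\BB$), and for each $b\in B^-$ the set $N_b=\{a:ba\notin C\}$ lies in $\A$ (it meets every $A\in\A$).

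The matrix is then built inductively: set $A_0=A^+$, pick $a_{00}\in A_0$, set $B_0=M_{a_{00}}$, pick $b_{00}\in B_0\cap B^-$; having $A_i\in\A$, $B_i\in\BB$, $a_{ii}\in A_i\cap A^+$, $b_{ii}\in B_i\cap B^-$, put $A_{i+1}=N_{b_{ii}}$, pick $a_{i+1,i+1}\in A_{i+1}\cap A^+$, set $B_{i+1}=M_{a_{i+1,i+1}}$, and pick $b_{i+1,i+1}\in B_{i+1}\cap B^-$; each intersection is non-empty by linkedness of $\A$ or $\BB$. For the off-diagonal entries take $a_{ij}=a_{ji}\in A_i\cap A_j$ and $b_{ij}=b_{ji}\in B_i\cap B_j$. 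Then $B_i=M_{a_{ii}}$ forces $a_{ii}b_{ij}\in C$ for all $j$, while $A_{i+1}=N_{b_{ii}}$ forces $b_{ii}a_{i+1,j}\notin C$ for all $j$, so the two collections in (2) are disjoint, contradicting the hypothesis. The main obstacle is precisely the placement of $A^+,B^-,M_a,N_b$ into their respective maximal linked upfamilies via the self-transversal identity $\A=\A^\perp$; this machinery, absent for general linked families and unnecessary in the filter setting of Theorem~\ref{t:N2}, is exactly what upgrades the condition from ``sequence plus matrix'' to ``matrix plus matrix''.
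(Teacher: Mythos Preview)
Your proof is correct. The direction from the failure of (2) to non-commutativity of $\lambda(X)$ is essentially identical to the paper's argument: you build the same linked families $\{A'\}\cup\{A_i\}$ and $\{B'\}\cup\{B_i\}$, enlarge them to $\A,\BB\in\lambda(X)$, and witness $\A*\BB\ne\BB*\A$ (you do it by showing $C\in\A*\BB\setminus\BB*\A$, the paper by exhibiting the disjoint pair $C\in\A*\BB$, $C'=\{b_{ii}a_{i+1,j}\}\in\BB*\A$; these are equivalent one-line observations).

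The other direction is where you take a genuinely different route. The paper exploits the elementary fact that two \emph{distinct} maximal linked upfamilies must contain disjoint sets: from $\A*\BB\ne\BB*\A$ it immediately obtains disjoint $C\in\A*\BB$ and $C'\in\BB*\A$, then reads off basic-form witnesses $A,(B_a)_{a\in A}$ for $C$ and $B,(A_b)_{b\in B}$ for $C'$ and builds the two matrices directly from these (with the ad hoc patch $a_{0j}=a_{00}$ for the zeroth row). You instead work with a single set $C\in\A*\BB\setminus\BB*\A$ and manufacture the ``$C'$-side'' information internally via the self-transversal identity $\A=\A^\perp$, $\BB=\BB^\perp$: the sets $A^+$, $B^-$, $M_a$, $N_b$ you introduce play exactly the role of $A$, $B$, $B_a$, $A_b$ in the paper's construction. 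Your argument is a bit longer but has the minor advantage that $A_0=A^+$ is defined from the outset, so the zeroth row of $(a_{ij})$ needs no special treatment. Either way, the inductive building of the diagonal by linkedness and the filling of the off-diagonal by pairwise intersections is the same mechanism.
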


\begin{proof}
$(1)\Ra(2)$ Assuming that the semigroup $\lambda(X)$ is not commutative, find two maximal linked systems $\mathcal A,\mathcal B\in\lambda(X)$ such that $\mathcal A*\mathcal B\ne\mathcal B*\mathcal A$. The maximal linked upfamilies $\mathcal A*\mathcal B$ and $\mathcal B*\mathcal A$ are distinct and hence contain two disjoint sets $C\in \mathcal A*\mathcal B$ and $C'\in\mathcal B*\mathcal A$. For these sets there are sets $A\in\A$, $B\in\mathcal B$ and families of sets $(B_a)_{a\in A}\in\mathcal B^A$, $(A_b)_{b\in B}\in \mathcal A^B$ such that $\bigcup_{a\in A}aB_a\subset C$ and $\bigcup_{b\in B}bA_a\subset C'$.

By induction we can construct two sequences $\{a_{ii}\}_{i\in\w}\subset A$ and $\{b_{ii}\}_{i\in\w}$ such that $b_{ii}\in B\cap B_{a_{ii}}$ and $a_{i+1,i+1}\in A\cap A_{b_{ii}}$ for every $i\in\w$.
Since the upfamilies $\mathcal B$ and $\mathcal A$ are linked, for every numbers $i<j$ we can choose points $b_{ij}\in B_{a_{ii}}\cap B_{a_{jj}}$ and $a_{i+1,j+1}\in A_{b_{ii}}\cap A_{b_{jj}}$, and put and $b_{ji}=b_{ij}$ and $a_{j+1,i+1}=a_{i+1,j+1}$. Also put $a_{0i}=a_{i0}=a_{00}$ for all $i\in\w$. In such way we have defined two symmetric matrices $(a_{ij})_{i,j\in\w}$ and $(b_{ij})_{i,j\in\w}$ with coefficients in the semigroup $X$. Observe that for each $i,j\in\w$ we get $a_{ii}*b_{ij}\in a_{ii}*B_{a_{ii}}\subset C$ and $b_{ii}*a_{i+1,j}\in b_{ii}*A_{b_{ii}}\subset C'$, which implies that the sets
$\{a_{ii}\cdot b_{ij}\}_{i,j\in\w}$ and $\{b_{ii}\cdot a_{i+1,j}\}_{i,j\in\w}$ are disjoint.
\smallskip

$(2)\Ra(1)$ Assume that there are two symmetric matrices $(a_{ij})_{i,j\in\w},(b_{ij})_{i,j\in\w}\in X^{\w\times\w}$ such that the sets $\{a_{ii}\cdot b_{ij}\}_{i,j\in\w}$ and $\{b_{ii}\cdot a_{i+1,j}\}_{i,j\in\w}$ are disjoint. Consider the sets $A=\{a_{ii}\}_{i\in\w}$ and $A_i=\{a_{ij}\}_{j\in\w}$ which form a linked system $\{A,A_i\}_{i\in\w}$ which can be enlarged to a maximal linked system $\A$. On the other hand, the sets $B=\{b_{ii}\}_{i\in\w}$ and $B_i=\{b_{ij}\}_{j\in\w}$ form a linked upfamily, which can be enlarged to a maximal linked upfamily $\mathcal B$. We claim that $\A*\mathcal B\ne\mathcal B*\A$. This follows from the fact that the maximal linked upfamilies $\mathcal A*\mathcal B$ and $\mathcal B*\A$ contains the disjoint sets
$$\{a_{ii}b_{ij}\}_{i,j\in\w}=\bigcup_{a_{ii}\in A}a_{ii}B_i\in\mathcal A*\mathcal B$$ and
$$\{b_{ii}a_{i+1,j}\}_{i,j\in\w}=\bigcup_{b_{ii}\in B}b_{ii}A_{i+1}\in\mathcal B*\mathcal A.$$
Therefore the semigroup $\mathcal A$ is not commutative.
%
\end{proof}

For a set $X$ consider the subset $$\lambda^\bullet_3(X)=\{\A\in\lambda(X):\exists Y\subset X\mbox{  such that $|Y|\le 3$ and $\A\in\lambda(Y)\subset\lambda(X)$}\}\subset\lambda^\bullet(X).$$

\begin{theorem}\label{lambda2} For a commutative semigroup $X$ the following conditions are equivalent:
\begin{enumerate}
\item the semigroup $\lambda^\bullet(X)$ is commutative;
\item any two maximal linked systems $\A,\mathcal B\in\lambda^\bullet_3(X)$ commute;
\item any two maximal linked systems $\A\in\lambda^\bullet(X)$ and $\mathcal B\in\lambda(X)$ commute;
\item for any elements $a,b,c,x,y,z\in X$ the sets $\{ax,ay,cy,cz\}$ and
    $\{xc,xb,za,zb\}$ are not disjoint;
\item for any elements $x_0,x_1,x_2,x_3,x_4,x_5$ the sets $\{x_1x_2,x_2x_3,x_3x_4,x_4x_5\}$ and $\{x_1x_4,x_2x_5,x_0x_1,x_0x_5\}$ are not disjoint.
\end{enumerate}
\end{theorem}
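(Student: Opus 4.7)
The plan is to establish the cycle $(3)\Ra(1)\Ra(2)\Ra(4)\Ra(5)\Ra(3)$ together with the direct equivalence $(4)\Leftrightarrow(5)$. The implications $(3)\Ra(1)\Ra(2)$ are immediate from the inclusions $\lambda^\bullet_3(X)\subset\lambda^\bullet(X)\subset\lambda(X)$. The equivalence $(4)\Leftrightarrow(5)$ is a pure renaming: under the substitution $a=x_2$, $b=x_0$, $c=x_4$, $x=x_1$, $y=x_3$, $z=x_5$, commutativity of $X$ identifies the two sets of $(4)$ with those of $(5)$ term by term.

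For $(2)\Ra(4)$, given $a,b,c,x,y,z\in X$, I would consider the triangle maximal linked systems $\A=\la\{a,b\},\{a,c\},\{b,c\}\ra$ and $\BB=\la\{x,y\},\{x,z\},\{y,z\}\ra$, both elements of $\lambda^\bullet_3(X)$. When $|\{a,b,c\}|=|\{x,y,z\}|=3$, the set $\{ax,ay,cy,cz\}$ realises the basic element $\bigcup_{a'\in\{a,c\}}a'B_{a'}\in\A*\BB$ with $B_a=\{x,y\}$, $B_c=\{y,z\}$, and $\{xc,xb,za,zb\}$ is the analogous basic element of $\BB*\A$; disjointness of these two sets would force the linked system $\A*\BB=\BB*\A$ to contain two disjoint members, contradicting $(2)$. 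In each degenerate case with $|\{a,b,c\}|\le 2$ or $|\{x,y,z\}|\le 2$, commutativity of $X$ already produces a common element of the two sets of $(4)$ (for instance, when $a=c$ one has $ax=xa$ in both sets), so $(4)$ holds automatically and no non-commuting pair is needed.

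The technical core is $(5)\Ra(3)$. Assuming $\A\in\lambda^\bullet(X)$ and $\BB\in\lambda(X)$ fail to commute, the distinct maximal linked systems $\A*\BB$ and $\BB*\A$ contain disjoint sets $C\supset\bigcup_{a\in A_0}aB_a$ and $C'\supset\bigcup_{b\in B_0}bA_b$, with $A_0\in\A$, $B_0\in\BB$, $(B_a)_{a\in A_0}\subset\BB$, and $(A_b)_{b\in B_0}\subset\A$. Using only the linkedness of $\A$ and $\BB$, I select sequentially $x_2\in A_0$, then $x_1\in B_{x_2}\cap B_0$, then $x_4\in A_0\cap A_{x_1}$, then $x_5\in B_{x_4}\cap B_0$, then $x_3\in B_{x_2}\cap B_{x_4}$, and finally $x_0\in A_{x_1}\cap A_{x_5}$. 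Direct substitution then yields $\{x_1x_2,x_2x_3\}\subset x_2B_{x_2}\subset C$ and $\{x_3x_4,x_4x_5\}\subset x_4B_{x_4}\subset C$, while $\{x_0x_1,x_1x_4\}\subset x_1A_{x_1}\subset C'$ and $\{x_2x_5,x_0x_5\}\subset x_5A_{x_5}\subset C'$, so the two sets of $(5)$ are disjoint, contradicting $(5)$.

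The principal obstacle I anticipate is the combinatorial design behind $(5)\Ra(3)$: one must recognize the correct bipartite scheme in which $x_0,x_2,x_4$ play the ``$\A$-roles'' and $x_1,x_3,x_5$ play the ``$\BB$-roles'', and arrange the order of choices so that each new $x_i$ is simultaneously available as an index for every $A_b$ or $B_a$ it must later belong to. Once this scheme is laid out, the linkedness of $\A$ and $\BB$ delivers the needed non-empty intersections for free, and the eight product-memberships required to contradict $(5)$ reduce to reading off the definitions of $C$ and $C'$.
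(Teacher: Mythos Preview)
Your argument for $(5)\Ra(3)$ contains a genuine gap. You claim $\{x_2x_5,x_0x_5\}\subset x_5A_{x_5}\subset C'$, which requires $x_2\in A_{x_5}$. But your selection scheme only places $x_2\in A_0$; nothing forces $x_2$ into $A_{x_5}$, and you cannot repair this by reordering. The constraints form a genuine $4$-cycle: you need $x_2\in A_{x_5}$, $x_5\in B_{x_4}$, $x_4\in A_{x_1}$, and $x_1\in B_{x_2}$, so each of $x_1,x_2,x_4,x_5$ must be chosen after another one in the list. Pairwise linkedness of $\A$ and $\BB$ gives nonempty intersections of \emph{two} members, but it cannot close this dependency cycle in one shot.

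The paper's proof of $(4)\Ra(3)$ confronts exactly this obstruction, and this is why it is considerably more elaborate than a single application of~$(5)$. One first builds an open-ended alternating chain $a_0,b_0,a_1,b_1,\dots$ with $a_i\in A_0$, $b_i\in B_0\cap B_{a_i}$, $a_{i+1}\in A_0\cap A_{b_i}$; the hypothesis $\A\in\lambda^\bullet(X)$ makes $A_0$ finite, so some $a_k=a_m$ with $k<m-1$ closes the chain into a cycle in $\IZ_n$, $n=m-k$. Off-diagonal elements $a_{ij},b_{ij}$ are then chosen via linkedness. The contradiction is obtained not from one instance of~$(5)$ but by \emph{induction} along the cycle: assuming $a_{00}b_{ii}\in U_{\A\BB}$, a suitable assignment of $x_0,\dots,x_5$ and condition~$(5)$ force $a_{00}b_{i+1,i+1}\in U_{\A\BB}$; at $i=n-1$ the cycle closure gives $a_{00}b_{n-1,n-1}=b_{m-1}a_m\in U_{\BB\A}$, the desired contradiction. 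Your bipartite role-assignment ($x_0,x_2,x_4$ as $\A$-elements, $x_1,x_3,x_5$ as $\BB$-elements) is the right intuition, but the finiteness of $\A$ and the inductive walk around the cycle are essential and missing from your outline.
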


\begin{proof} It suffices to prove the implications $(3)\Ra(1)\Ra(2)\Ra(4)\Leftrightarrow (5)\Ra(1)$.
In fact, the implications $(3)\Ra(1)\Ra(2)$ are trivial while the equivalence $(4)\Leftrightarrow(5)$ follows from the observation that for any points $b=x_0$, $x=x_1$, $a=x_2$, $y=x_3$, $c=x_4$, $z=x_5$ in $X$ we get $$\{ax,ay,cy,cz\}\cap\{xc,xb,za,zb\}=\{x_1x_2,x_2x_3,x_3x_4,x_4x_5\}\cap\{x_1x_4,x_1x_0,x_5x_2,x_5x_0\}.$$

$(2)\Ra(4)$ Assume that for some elements $a,b,c,x,y,z\in X$ the sets $\{ax,ay,cy,cz\}$ and $\{xc,xb,za,zb\}$ are disjoint. Consider the maximal linked systems $\A=\{A\subset X:|A\cap\{a,b,c\}|\ge 2\}$ and $\mathcal X=\{A\subset X:|A\cap\{x,y,z\}|\ge 2\}$ and observe that $\A,\mathcal X\in\lambda^\bullet_3(X)$ and the products $\mathcal A*\mathcal X$ and $\mathcal X*\A$ are distinct since they contain disjoint sets
$$a\{x,y\}\cup c\{y,z\}\in\mathcal A*\mathcal X\mbox{ and }x\{c,b\}\cup z\{a,b\}\in\mathcal X*\mathcal A.$$

$(4)\Ra(3)$ The proof of this implication is the most difficult part of the proof. Assume that $(4)$ holds but there are two non-commuting maximal linked systems $\A\in\lambda^\bullet(X)$ and $\mathcal B\in\lambda(X)$. Then the maximal linked systems $\mathcal A*\mathcal B$ and $\mathcal B*\A$ contain disjoint sets. Consequently, we can find sets $A\in\A$ and $B\in\mathcal B$ and families  $(B_a)_{a\in A}\in\mathcal B^A$ and $(A_b)_{b\in B}\in\mathcal A^B$ such that the sets $U_{\mathcal A\mathcal B}=\bigcup_{a\in A}a*B_a\in\A*\mathcal B$ and $U_{\mathcal B\A}=\bigcup_{b\in B}b*A_b\in\mathcal B*\A$ are disjoint. Since $\mathcal A\in\lambda^\bullet(X)$, we can additionally assume that the set $A$ is finite.

By analogy with the proof of Theorem~\ref{lambda-matrix}, construct inductively two sequences $(a_{i})_{i\in\w}\in A^\w$ and $(b_{i})_{i\in\w}\in B^\w$ such that $b_{i}\in B\cap B_{a_i}$ and $a_{i+1}\in A\cap A_{b_i}$.
Since the set $A$ is finite, there are two numbers $k,m$ such that $0<k<m-1$ and $a_k=a_m$.

Let $n=m-k\ge 2$ and consider the group $\IZ_n=\{0,1,\dots,n-1\}$ endowed with the group operation of addition modulo $n$, which will be denoted by the symbol $\oplus$. So, $1\oplus(n-1)=0$. For each $i\in \IZ_n$ let $a_{ii}=a_{k+i}$ and $b_{ii}=b_{k+i}$. For every numbers $i<j$ in $\IZ_n$ choose points $b_{ij}=b_{ji}\in B_{a_{ii}}\cap B_{a_{jj}}$ and $a_{ij}=a_{ji}\in A_{b_{i',i'}}\cap A_{b_{j',j'}}$ where $i',j'\in\IZ_n$ are unique numbers such that $i=i'\oplus 1$ and $j'=j\oplus 1$. It follows that $a_{ii}b_{ij}\in a_{ii}B_{a_{ii}}\subset U_{\A\mathcal B}$ and
$b_{ii}a_{i\oplus1,j}\in b_{ii}A_{b_{ii}}\in U_{\mathcal B\A}$.
So, $$\{a_{ii}*b_{ij}\}_{i,j\in\IZ_n}\cap\{b_{ii}*a_{i\oplus1,j}\}_{i,j\in\IZ_n}\subset U_{\mathcal A\mathcal B}\cap U_{\mathcal B\A}=\emptyset.$$

By induction on $i\in\IZ_n$ we shall prove that $a_{00}*b_{ii}\in U_{\A\mathcal B}$. This is trivial for $i=0$. Assume that for some positive number $i<n-1$ we have proved that $a_{00}*b_{ii}\in U_{\A\mathcal B}$.
Let $$x_0=a_{i+1,i\oplus2},\;\;x_1=b_{i,i},\;\;x_2=a_{00},\;\;x_3=b_{0,i+1},\;\;x_4=a_{i+1,i+1},\;\; x_5=b_{i+1,i+1}.$$
It follows that
$$
\begin{aligned}
\{x_1x_2,x_2x_3,x_3x_4,x_4x_5\}&=
\{b_{i,i}*a_{00},a_{00}*b_{0,i+1},b_{0,i+1}*a_{i+1,i+1},a_{i+1,i+1}*b_{i+1,i+1}\}\subset\\
&\subset U_{\A\mathcal B}\cup a_{00}*B_{a_{00}}\cup a_{i+1,i+1}*B_{a_{i+1,i+1}}\cup a_{i+1,i+1}*B_{a_{i+1,i+1}}\subset U_{\A\mathcal B}.
\end{aligned}
$$
On the other hand,
$$
\begin{aligned}
\{x_0x_1,x_0x_5,x_1x_4\}&=\{a_{i+1,i\oplus2}*b_{i,i},
a_{i+1,i\oplus 2}*b_{i+1,i+1},b_{i,i}*a_{i+1,i+1}\}\subset\\
&\subset b_{i,i}*A_{b_{i,i}}\cup b_{i+1,i+1}*A_{b_{i+1,i+1}} \cup b_{i,i}*A_{b_{i,i}}\subset U_{\mathcal B\A}.
\end{aligned}
$$
Then $\{x_1x_2,x_2x_3,x_3x_4,x_4x_5\}\cap\{x_0x_1,x_0x_5,x_1x_4\}\subset U_{\A\mathcal B}\cap U_{\mathcal B\A}=\emptyset.$
By the condition (4), the intersection
$$\{x_1x_2,x_2x_3,x_3x_4,x_4x_5\}\cap\{x_0x_1,x_0x_5,x_1x_4,x_2x_5\}$$is not empty, which implies that $a_{00}*b_{i+1,i+1}=x_2x_5\in \{x_1x_2,x_2x_3,x_3x_4,x_4x_5\}\subset U_{\A\mathcal B}$.
After completing the inductive construction, we conclude that $a_{00}*b_{n-1,n-1}\in U_{\A\mathcal B}$ which is impossible as $$a_{00}*b_{n-1,n-1}=a_k*b_{k+n-1}=a_m*b_{m-1}=b_{m-1}*a_m\in U_{\mathcal B\A}.$$
\end{proof}

We shall apply Theorem~\ref{lambda2} to detecting monogenic semigroups that have commutative superextensions.

\begin{theorem}\label{t9.3}
For a monogenic semigroup $X=\{x^k\}_{k\in\IN}$ the following conditions are equivalent
\begin{enumerate}
\item $\lambda(X)$ is commutative;
\item $\lambda^\bullet(X)$ is commutative;
\item $x^n=x^m$ for some $(n,m)\in
\{(1,2),\;(1,3),(2,3),\;(1,4),(2,4),(3,4),\;(1,5),(2,5),(3,5),(4,5),\;(2,6)\}$.
\end{enumerate}
\end{theorem}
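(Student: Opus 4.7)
The plan is to verify $(1)\Rightarrow(2)\Rightarrow(3)\Rightarrow(1)$, where $(1)\Rightarrow(2)$ is immediate from $\lambda^\bullet(X)\subseteq\lambda(X)$. For $(3)\Rightarrow(1)$, I will split the eleven listed cases into three groups. The four $n=1$ cases realize $X=\IZ_{m-1}$ as a cyclic group of order at most $4$, so commutativity of $\lambda(X)$ is the cited result of \cite{BGN}. For the four $n=2$ cases I will define $\pi\colon X\to C_x$ by $\pi(x)=x^{m-1}$ and $\pi(x^i)=x^i$ for $i\geq 2$, and verify that $\pi$ is a projection of $X$ onto its Clifford part $C_x\cong\IZ_{m-2}$; the key identity $\pi(x)^2=x^{2(m-1)}=x^2$ follows from $2(m-1)\equiv 2\pmod{m-2}$, and the remaining products $\pi(x)\pi(x^i)=x^{m-1+i}=x^{i+1}$ (for $i\geq 1$) are verified analogously. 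Corollary~\ref{c:pext2} then reduces commutativity of $\lambda(X)$ to that of $\lambda(\IZ_{m-2})$, which holds by \cite{BGN} since $m-2\leq 4$. The three residual cases $(3,4),(3,5),(4,5)$ have $|X|\leq 4$, and I will verify condition (5) of Theorem~\ref{lambda2} directly, since it is a finite check over at most $4^6$ tuples in $X^6$.

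For $(2)\Rightarrow(3)$ I argue the contrapositive via Theorem~\ref{lambda2}(5). If $X$ is infinite (so $X\cong(\IN,+)$), the tuple $(x_0,\dots,x_5)=(1,2,4,8,16,32)$ yields the sums $\{x_1+x_2,x_2+x_3,x_3+x_4,x_4+x_5\}=\{6,12,24,48\}$ and $\{x_1+x_4,x_2+x_5,x_0+x_1,x_0+x_5\}=\{18,36,3,33\}$, which are disjoint, so $\lambda^\bullet(X)$ is not commutative. If $X$ is finite with $m-n\geq 5$, then the Clifford part $C_x\cong\IZ_{m-n}$ is a finite subsemigroup of order at least $5$, so $\lambda(C_x)$ is non-commutative by \cite{BGN}; since all ultrafilters and maximal linked systems on the finite set $C_x$ are finitely supported, the natural embedding $\lambda(C_x)\hookrightarrow\lambda^\bullet(X)$ propagates non-commutativity.

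The main obstacle is the residual subcase of $(2)\Rightarrow(3)$ in which $X$ is finite with $n\geq 3$, $m-n\leq 4$, and $(n,m)\notin\{(3,4),(3,5),(4,5)\}$ --- namely the pairs $(3,6),(4,6),(5,6),(3,7),(4,7),(5,7),(6,7),\dots$ ranging up to $(n,n+4)$ for every $n\geq 3$. For each such $(n,m)$ I must explicitly construct a 6-tuple in $X^6$ whose two quadrics in Theorem~\ref{lambda2}(5) are disjoint subsets of $X$. Since $n\geq 3$ guarantees $x^2\notin C_x$, the external elements $x,x^2,\dots,x^{n-1}$ should be exploited to escape the collapsing effect of the $(m-n)$-periodicity in $C_x$, and the precise witnesses will depend delicately on the residues of the chosen exponents modulo $m-n$. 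Carrying out this combinatorial construction uniformly across the infinite parametric family of bad pairs $(n,m)$ is the principal technical step; I expect to handle it by a case analysis of the possible Clifford periods $p=m-n\in\{1,2,3,4\}$ together with a small finite list of ``base'' witnesses that can be propagated along the semigroup's periodic structure.
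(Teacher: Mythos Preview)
Your scheme for $(3)\Rightarrow(1)$ is essentially the paper's: the $n=1$ cases via \cite{BGN}, the $n=2$ cases via the projection $\pi$ onto the Clifford part and Corollary~\ref{c:pext2}, and the three remaining small cases by hand. The paper handles $(3,5)$ by writing out the $12\times 12$ Cayley table of $\lambda(X)$ rather than checking condition~(5) of Theorem~\ref{lambda2} over $4^6$ tuples, but both are finite verifications.

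For $(2)\Rightarrow(3)$ your reductions for infinite $X$ and for $m-n\ge 5$ are correct, and your propagation idea is sound: since for fixed period $p=m-n$ there is a surjective homomorphism $\langle x\mid x^n=x^{n+p}\rangle\to\langle x\mid x^{n_0}=x^{n_0+p}\rangle$ whenever $n\ge n_0$, a witness at the minimal bad $n_0$ for each $p\in\{1,2,3,4\}$ does lift. So four base witnesses would close the argument---but you have not produced them, and this is the acknowledged gap.

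The paper avoids this parametric family entirely. Rather than organizing by $(n,m)$, it assumes $\lambda^\bullet(X)$ commutative and argues directly in the abstract monogenic semigroup: if $|X|\ge 5$ and $x^6\ne x^2$, then $x,x^2,x^3,x^4,x^5$ are pairwise distinct, and a \emph{single} witness $(x_0,\dots,x_5)=(x^4,x^3,x,x^2,x^2,x)$ in Theorem~\ref{lambda2}(5) forces $x^7\in\{x^3,x^4\}$. Each of these two alternatives is then killed by one further explicit witness (using $x^6\ne x^2$ in the first). Thus three $6$-tuples of powers of $x$ suffice for the whole residual range, with no case split on the period $p$ and no induction on $n$. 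This is both shorter and conceptually cleaner than the propagation strategy you outline; you may wish to adopt it.
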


\begin{proof} We shall prove the implications $(3)\Ra(1)\Ra (2)\Ra(3)$, among which the implication $(1)\Ra(2)$ is trivial.
\smallskip

$(3)\Ra(1)$ Assume that $x^n=x^m$ for some  $$(n,m)\in
\{(1,2),(1,3),(2,3),(1,4),(2,4),(3,4),(1,5),(2,5),(3,5),(4,5),(2,6)\}.$$
If $(n,m)\in \{(1,2),(1,3),(1,4),(1,5)\}$ then $X$ is isomorphic
to a cyclic group of order $\leq 4$ and $\lambda(X)$ is
commutative by Theorem 5.1 of \cite{BGN}.

If $(n,m)=(2,3)$, then the semigroup $\lambda(X)=X$ is commutative.

If $(n,m)\in \{(2,4),(3,4)\}$, then $|X|=3$ and $\lambda(X)=X\cup\{\triangle\}$ where $\triangle=\{A\subset X:|A|\ge 2\}$. Taking into account that $xy=yx$ and $\triangle\cdot x=x\cdot\triangle$ for all $x,y\in X$, we see that the semigroup $\lambda(X)$ is commutative.

If $(n,m)=(2,5)$, then $xa=x^4a$ for every
$a\in X$ and hence $X=\{x,x^2,x^3,x^2\}$ is a projective extension of the cyclic subgroup $\{x^2,x^3,x^4\}$. In this case the commutativity of $\lambda(X)$ follows from the commutativity of $\lambda(C_3)$ according to Proposition~\ref{c:pext2}.

By analogy, for $(n,m)=(2,6)$ the commutativity of the semigroup $\lambda(X)$ follows from the commutativity of the semigroup $\lambda(C_4)$.

Now consider the case $(n,m)=(3,5)$. In this case $X=\{x,x^2,x^3,x^4\}$ and the semigroup
$\lambda(X)$ contains 12 elements:
$$
\begin{aligned}
&k=\la\{x^k\}\ra,\\
&\triangle_k=\la \{A\subset X:|A|=2,\;x^k\notin
A\}\ra\mbox{ \  and \ }\\
&\square_k=\la\{X\setminus\{x^k\},A:A\subset
X,\;|A|=2,\;x^k\in A\}\ra,
\end{aligned}
$$where$k\in \{1,2,3,4\}$. The
following Cayley table of multiplication in the semigroup $\lambda(X)$ implies the commutativity of $\lambda(X)$:

$$
\begin{array}{|c|cccccccc|}
\hline
* & \triangle_1 & \triangle_2 & \triangle_3 & \triangle_4 & \square_1 & \square_2 & \square_3 & \square_4\\
\hline
\triangle_1 & 4 & 3 & 4 & 3 &  3 & 4 & 3 & 4 \\
\triangle_2 & 3 & \triangle_1 & 3 & \triangle_1 &  \triangle_1 & 3 & \triangle_1 & 3 \\
\triangle_3 & 4 & 3 & 4 & 3 &  3 & 4 & 3 & 4 \\
\triangle_4 & 3 & \triangle_1 & 3 & \triangle_1 &  \triangle_1 & 3 & \triangle_1 & 3 \\
\square_1 & 3 & \triangle_1 & 3 & \triangle_1 &  \triangle_1 & 3 & \triangle_1 & 3 \\
\square_2 & 4 & 3 & 4 & 3 &  3 & 4 & 3 & 4 \\
\square_3 & 3 & \triangle_1 & 3 & \triangle_1 &  \triangle_1 & 3 & \triangle_1 & 3 \\
\square_4 & 4 & 3 & 4 & 3 &  3 & 4 & 3 & 4 \\
\hline
\end{array}
$$
\smallskip

In the final case $(n,m)=(4,5)$, the product of
any two nonprincipal maximal linked upfamilies is equal to the principal
ultrafilter $\la \{x^4\}\ra$, which implies that the semigroup $\lambda(X)$ is commutative.
\smallskip

$(2)\Ra(3)$ Let $X=\{x^k\}_{k\in\IN}$ be a monogenic semigroup with commutative extension $\lambda^\bullet(X)$. If $|X|\le 4$, then $x^n=x^m$ for some $(n,m)\in  \{(1,2),(1,3),(2,3),(1,4),(2,4),(3,4),(1,5),(2,5),(3,5),(4,5)\}$. If $x^6=x^2$, then we are done. So, we assume that $x^6\ne x^2$ and $|X|\ge 5$. In this case the elements $x,x^2,x^3,x^4,x^5$ are pairwise distinct.

We claim that $x^7\in \{x^3,x^4\}$. In the opposite case we can put $x_0=x^{4}$, $x_1=x^3$,
$x_2=x$, $x_3=x^2$, $x_4=x^2$, $x_5=x$ and observe that
$$\{x_1x_2,x_2x_3,x_3x_4,x_4x_5\}\cap \{x_1x_4,x_2x_5,x_0x_1,x_0x_5\}=\{x^3, x^4\}\cap \{x^2, x^5,x^7\}=\emptyset,$$which implies that the semigroup $\lambda^\bullet(X)$ is not commutative according to Theorem~\ref{lambda2}. This contradiction shows that $x^7\in\{x^3,x^4\}$ and hence the monogenic semigroup $X$ is finite.

If $x^7=x^3$, then we can put $x_0=x^5$, $x_1=x_2=x$, $x_3=x^3$, $x_4=x_5=x^2$ and observe that
$$\{x_1x_2,x_2x_3,x_3x_4,x_4x_5\}\cap \{x_1x_4,x_2x_5,x_0x_1,x_0x_5\}=\{x^2,x^4,x^5\}\cap\{x^3,x^6,x^7\}=\emptyset$$since $x^6\ne x^2$.  By Theorem~\ref{lambda2}, the semigroup $\lambda^\bullet(X)$ is not commutative.

If $x^7=x^4$, then we can put
 $x_0=x_1=x$, $x_2=x^4$, $x_3=x^3$,
$x_4=x_5=x^2$ and observe that
$$\{x_1x_2,x_2x_3,x_3x_4,x_4x_5\}\cap \{x_1x_4,x_2x_5,x_0x_1,x_0x_5\}=\{x^5,x^7,x^4\}\cap \{x^3,x^6,x^2,x^3\}=\emptyset,$$ which implies that the semigroup $\lambda^\bullet(X)$ is not commutative according to Theorem~\ref{lambda2}.
\end{proof}

Now we establish some structural properties of semigroups $X$ having commutative superextensions $\lambda(X)$.

A semigroup $X$ is called a {\em $0$-bouquet} of its subsemigroups $X_\alpha$, $\alpha\in I$, if
\begin{itemize}
\item $X=\bigcup_{\alpha\in A}X_\alpha$;
\item $X$ has two-sided zero $0$;
\item $X_\alpha\cap X_\beta=X_\alpha*X_\beta=\{0\}$ for any distinct indices $\alpha,\beta\in I$.
\end{itemize}
In this case we write $X=\bigvee_{\alpha\in I}X_\alpha$.

\begin{proposition}\label{bouquet} Assume that a semigroup $X=\bigvee_{\alpha\in I}X_\alpha$ is a $0$-bouquet of its subsemigroups $X_\alpha$, $\alpha\in I$. The superextension $\lambda(X)$ is commutative if and only if for each $\alpha\in I$ the semigroup $\lambda(X_\alpha)$ is commutative.
\end{proposition}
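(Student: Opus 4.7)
For the direct implication, each inclusion $j_\alpha\colon X_\alpha\hookrightarrow X$ is a semigroup homomorphism and induces an injective semigroup homomorphism $\upsilon j_\alpha\colon\upsilon(X_\alpha)\to\upsilon(X)$ whose restriction embeds $\lambda(X_\alpha)$ as a subsemigroup of $\lambda(X)$. Commutativity of $\lambda(X)$ therefore descends to every $\lambda(X_\alpha)$.

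The converse is the substantial direction. Assume each $\lambda(X_\alpha)$ is commutative; I will verify the matrix criterion of Theorem~\ref{lambda-matrix} for $\lambda(X)$. Given arbitrary symmetric matrices $(a_{ij}),(b_{ij})\in X^{\w\times\w}$, the goal is to show that $\{a_{ii}b_{ij}\}_{i,j\in\w}\cap\{b_{ii}a_{i+1,j}\}_{i,j\in\w}\neq\emptyset$. The driving algebraic feature is the bouquet identity $X_\alpha*X_\beta=\{0\}$ for $\alpha\neq\beta$.

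If both product sets contain $0$, the intersection is nonempty and we are done. Otherwise, one of the two sets---say $\{a_{ii}b_{ij}\}_{i,j\in\w}$---avoids $0$; the symmetric situation $0\notin\{b_{ii}a_{i+1,j}\}_{i,j\in\w}$ is dispatched by a parallel argument. In this case, every product $a_{ii}b_{ij}$ is nonzero, so by the bouquet identity $a_{ii}$ and $b_{ij}$ must lie in a common nonzero component $X_{\alpha(i)}$. The symmetry $b_{ij}=b_{ji}$, together with the uniqueness of the component of a nonzero element, forces $\alpha(i)=\alpha(j)$, so $\alpha(i)=\alpha$ is independent of $i$, and every $a_{ii}$ and every $b_{ij}$ lies in $X_\alpha$. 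The retraction $r_\alpha=j_\alpha\circ\pi_\alpha\colon X\to X_\alpha$ (fixing $X_\alpha$ and collapsing its complement onto $0$) is a semigroup homomorphism, so $(r_\alpha(a_{ij}))$ and $(b_{ij})$ are symmetric matrices in $X_\alpha$. Applying the matrix criterion to the commutative semigroup $\lambda(X_\alpha)$ yields indices $i,j,k,l$ and an element $x\in X_\alpha$ with
\[
x=a_{ii}b_{ij}=b_{kk}\,r_\alpha(a_{k+1,l}).
\]
Since $x=a_{ii}b_{ij}\neq 0$, the factor $r_\alpha(a_{k+1,l})$ is nonzero, which is possible only if $a_{k+1,l}\in X_\alpha$; hence $r_\alpha(a_{k+1,l})=a_{k+1,l}$ and $x=b_{kk}a_{k+1,l}$ in $X$, placing $x$ in the required intersection.

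The main obstacle is the bookkeeping of the case analysis---particularly in the parallel situation $0\notin\{b_{ii}a_{i+1,j}\}_{i,j\in\w}$, where only the single entry $a_{00}$ of $(a_{ij})$ may fail to be pinned into the common component $X_\beta$; but the matching element produced in $X_\beta$ cannot vanish (else it would contradict the assumed absence of $0$ from the second set), and its non-vanishing forces $a_{00}\in X_\beta$ in any instance where this entry enters the match. The bouquet identity together with the nonzero assumption is precisely what makes the lift from $X_\alpha$ back to $X$ faithful.
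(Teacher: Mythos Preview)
Your proof is correct and follows essentially the same route as the paper's: both arguments reduce to the matrix criterion of Theorem~\ref{lambda-matrix}, split into the two cases according to which of the product sets avoids $0$, pin all the relevant entries into a single component $X_\alpha$, and replace the remaining entries by their image under the retraction $r_\alpha$ (the paper writes this as the ad hoc substitution $a_{ij}\mapsto a'_{ij}$). The only cosmetic difference is that the paper argues by contraposition whereas you verify the criterion directly; the handling of the exceptional entry $a_{00}$ in the second case is also identical in substance.
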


\begin{proof} The ``only if'' part is trivial. To prove the ``if'' part, assume that the semigroup $\lambda(X)$ is not commutative. By Theorem~\ref{lambda-matrix}, there are two symmetric matrices $(a_{ij})_{i,j\in\w}$ and $(b_{ij})_{i,j\in\w}$ with the coefficients in $X$ such that the sets
$A=\{a_{ii}*b_{ij}\}_{i,j\in\w}$ and $B=\{b_{ii}*a_{i+1,j}\}_{i,j\in\w}$ are disjoint. Then $0\notin A$ or $0\notin B$.

First assume that $0\notin A$. Find an index $\alpha\in I$ such that $a_{00}\in X_\alpha$. It follows from $0\notin\{a_{00}b_{0j}\}_{j\in\w}$ that $b_{0j}\in X_\alpha$ for all $j\in\w$. Observe that for every $i\in\w$ we get $a_{ii}b_{i0}=a_{ii}b_{0i}\ne 0$ and hence $a_{ii}\in X_\alpha$. Finally, for each $i,j\in\w$, the inequality $a_{ii}b_{ij}\ne 0$ implies that $b_{ij}\in X_\alpha$. So, $\{a_{ii}\}_{i\in\w}\cup \{b_{ij}\}_{i,j\in A}\subset X_\alpha$. Now for every $i,j\in\w$ put
$$a_{ij}'=\begin{cases}
a_{ij}&\mbox{if $a_{ij}\in X_\alpha$},\\
0&\mbox{otherwise}
\end{cases}
$$
and observe that $(a_{ij})_{i,j\in\w}$ is a symmetric matrix with coefficients in $X_\alpha$.
It follows that $\{a_{ii}'b_{ij}\}_{i,j\in\w}=\{a_{ii}b_{ij}\}_{i,j\in\w}=A$ and $\{b_{ii}a_{i+1,j}\}_{i,j\in\w}\subset (B\cap X_\alpha)\cup\{0\}$. Since $A\cap (B\cup\{0\})=\emptyset$, Theorem~\ref{lambda-matrix} implies that the semigroup $\lambda(X_\alpha)$ is not commutative.
\smallskip

By analogy, we can treat the case $0\notin B=\{b_{ii}*a_{i+1,j}\}_{i,j\in\w}$. In this case there is $\alpha\in I$ such that $\{b_{ii}\}_{i\in\w}\cup\{a_{i+1,j}\}_{i,j\in\w}\subset X_\alpha\setminus\{0\}$.
Changing the element $a_{00}$ by $0$, if necessary, we get $\{a_{ij}\}_{i,j\in\w}\subset X_\alpha$.
Now for every $i,j\in \w$ put $$b_{ij}'=\begin{cases}
b_{ij}&\mbox{if $b_{ij}\in X_\alpha$},\\
0&\mbox{otherwise}.
\end{cases}
$$
Observe that $(a_{ij})_{i,j\in\w}$ and $(b'_{ij})_{i,j\in\w}$ are symmetric matrices with coefficients in $X_\alpha$ such that $\{a_{ii}b_{ij}'\}_{i,j\in\w}\subset A\cup\{0\}$ and $\{b'_{ii}a_{i+1,j}\}_{i,j\in\w}=(b_{ii}a_{i+1,j}\}_{i,j\in\w}=B$. Since $(A\cup\{0\})\cap B=\emptyset$, Theorem~\ref{lambda-matrix} implies that the semigroup $\lambda(X_\alpha)$ is not commutative.
\end{proof}



Now we detect regular semigroups $X$ whose superextensions $\lambda(X)$ are commutative.

In the following theorem for a natural number $n\in\IN$ by
$$C_n=\{z\in\mathbb C:z^n=1\}$$we denote the cyclic group of order $n$ and by $$L_n=\{0,\dots,n-1\}$$ the linear semilattice endowed with the operation of minimum.

For two semigroups $(X,*)$ and $(Y,\star)$ by $X\sqcup Y$ we denote the semigroup $X\times\{0\}\cup Y\times\{1\}$ endowed with the semigroup operation
$$(a,i)\circ(b,j)=\begin{cases} (a*b,0)&\mbox{if $i=0$ and $j=0$},\\
(a,0)&\mbox{if $i=0$ and $j=1$},\\
(b,0)&\mbox{if $i=1$ and $j=0$},\\
(a\star b,1)&\mbox{if $i=1$ and $j=1$}.
\end{cases}
$$
The semigroup $X\sqcup Y$ is called the {\em ordered union} of the semigroups $X$ and $Y$. For example, the ordered union $L_1\sqcup C_2$ is isomorphic to the multiplicative semigroup $\{-1,0,1\}$.

\begin{theorem}\label{t2} The superextension $\lambda(X)$ of a regular semigroup $X$ is commutative if and only if one of the following conditions holds:
\begin{itemize}
\item $X$ is isomorphic to one of the semigroups: $C_2$, $C_3$, $C_4$, $C_2\times C_2$, $C_2\times L_2$, $L_1\sqcup C_2$, $C_2\bigsqcup L_n$ for some $n\in\IN$;
\item $X=\bigvee_{\alpha\in
A}X_\alpha$ for some subsemigroups $X_\alpha$, $\alpha\in A$, isomorphic
to $L_1\sqcup C_2$ or $L_n$ for $n\in\IN$.
\end{itemize}
\end{theorem}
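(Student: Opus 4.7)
The plan is to prove both implications, with the necessity direction requiring the bulk of the work.

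\textbf{Sufficiency.} For each semigroup in the list I would verify directly that $\lambda(X)$ is commutative. The cyclic groups $C_2,C_3,C_4$ and the Klein group $C_2\times C_2$ all have cardinality at most $4$, so Theorem~5.1 of \cite{BGN} applies. For the ordered unions $L_1\sqcup C_2$ and $C_2\sqcup L_n$ and for the semilattice of groups $C_2\times L_2$, I would apply the symmetric-matrix criterion of Theorem~\ref{lambda-matrix}: the multiplication in each of these semigroups is rigid enough (a zero absorbs all mixed products, or the transition map between group fibres is trivial) that the required intersection $\{a_{ii}b_{ij}\}\cap\{b_{ii}a_{i+1,j}\}$ is always forced. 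The bouquet case then follows from Proposition~\ref{bouquet}, noting that $\lambda(L_n)$ is commutative because $L_n$ is a finite linearly ordered semilattice (cf.\ \cite{BGs}) and $\lambda(L_1\sqcup C_2)$ is commutative by the previous step.

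\textbf{Necessity, reduction to Clifford structure.} Assume $\lambda(X)$ is commutative. Since $X\subset\lambda(X)$, the semigroup $X$ is commutative, and regularity then gives $X=C(X)=\bigcup_{e\in E}H_e$ with $E=E(X)$ a semilattice and each $H_e$ an abelian group. Each $H_e$ is a subsemigroup of $X$, so $\lambda(H_e)\subset\lambda(X)$ is commutative; by \cite{BGN}, $|H_e|\le 4$, i.e.\ $H_e$ is one of $C_1,C_2,C_3,C_4,C_2\times C_2$.

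\textbf{Necessity, structural analysis.} To identify the admissible configurations I would use Theorem~\ref{lambda2}(5) systematically. First I show that if $e,f\in E$ are incomparable, then the idempotent $ef$ must behave as an absorbing zero on the subsemigroup generated by $H_e\cup H_f$; otherwise an explicit six-tuple drawn from $H_e\cup H_f\cup\{ef\}$ violates condition~(5). A similar analysis of the homomorphism $H_e\to H_f$ induced by multiplication by $f$ (when $e>f$) shows that it must be trivial whenever $H_f$ is nontrivial. These two observations force a $0$-bouquet decomposition $X=\bigvee_\alpha X_\alpha$ in which each summand $X_\alpha$ has linearly ordered idempotents and a common zero. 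Proposition~\ref{bouquet} reduces commutativity of $\lambda(X)$ to that of each $\lambda(X_\alpha)$. A further six-tuple analysis inside a single chain summand shows that at most one $H_e$ is nontrivial, that this $H_e$ must equal $C_2$ unless the chain is a single point, and that the position of this $C_2$ in the chain is limited to: at the bottom ($L_1\sqcup C_2$), at the top ($C_2\sqcup L_n$), or, in a two-level chain, with a parallel $C_2$ fibre ($C_2\times L_2$). If the bouquet has a single summand whose chain is a single point, then $X$ is one of the five small groups, which yields the remaining cases $C_2,C_3,C_4,C_2\times C_2$.

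\textbf{Main obstacle.} The hardest step is the last, namely the exclusion of every chain-group configuration not on the list. This requires producing, for each forbidden candidate such as $C_3\sqcup L_1$, $C_4\sqcup L_1$, two stacked copies of $C_2$, or $L_n\sqcup C_2\sqcup L_m$ with $m\ge 1$, an explicit six-tuple $(x_0,\dots,x_5)$ whose two sets in Theorem~\ref{lambda2}(5) are disjoint. The casework is delicate because the criterion involves twelve products and one must carefully exploit the interplay between the group structure and the semilattice order; the dual task of verifying that the permitted configurations \emph{do} satisfy Theorem~\ref{lambda-matrix} is more uniform but still relies on the very special collapsing behaviour of $C_2$.
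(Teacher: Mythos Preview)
Your overall architecture matches the paper's: reduce to a commutative Clifford semigroup, bound the maximal subgroups using \cite{BGN}, understand the idempotent semilattice, and then exclude forbidden chain-group configurations. However, there are two genuine errors in the structural analysis.

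First, your claim that the transition homomorphism $H_e\to H_f$ ``must be trivial whenever $H_f$ is nontrivial'' is backwards. In $C_2\times L_2$ (which \emph{is} on the list) both fibres are $C_2$ and the transition map is the identity, hence nontrivial; whereas in $C_2\sqcup C_2$ (which the paper excludes via an explicit pair of non-commuting maximal linked systems, Lemma~\ref{l9.5}(7)) the transition map is trivial. So your criterion would admit the forbidden $C_2\sqcup C_2$ and reject the admissible $C_2\times L_2$. You also have the orientation of $L_1\sqcup C_2$ reversed: in that semigroup the $C_2$ sits at the \emph{top}, not the bottom.

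Second, you treat the bouquet decomposition uniformly and then analyse ``a single chain summand'', but this misses a crucial asymmetry. When $E(X)$ is genuinely non-linear (more than one summand), the shared zero $e_0$ has $H_{e_0}$ trivial --- the paper proves this with a direct construction of two non-commuting maximal linked systems supported on $\{e_0,a,e_1,e_2\}$ --- and this is exactly why the summands in the bouquet case are restricted to $L_1\sqcup C_2$ and $L_n$, with $C_2\sqcup L_n$ and $C_2\times L_2$ appearing only when $E(X)$ is a single chain. Your outline allows all chain types as summands, which is false.

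Methodologically, the paper does not run the six-tuple criterion of Theorem~\ref{lambda2} to kill the forbidden cases; instead it isolates seven prototype semigroups (Lemma~\ref{l9.5}) and for each one exhibits an explicit pair $\mathcal A,\mathcal B\in\lambda(X)$ with $\mathcal A*\mathcal B\ne\mathcal B*\mathcal A$. For sufficiency of $C_2\times L_2$ and $C_2\sqcup L_n$, and for the fact that $E(X)$ is a $0$-bouquet of finite linear semilattices, the paper cites \cite{BGi} rather than re-deriving these from Theorem~\ref{lambda-matrix}. Your plan to obtain all of this from the matrix and six-tuple criteria is possible in principle, but is substantially more work than your sketch suggests and would still need to be corrected for the two points above.
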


\begin{proof} To prove the ``if'' part, assume that a semigroup $X$ satisfies conditions (1) or (2). If $X$ is isomorphic to one of the groups $C_2$, $C_3$, $C_4$, or $C_2\times
C_2$, then its superextension $\lambda(X)$ is commutative according to Theorem 5.1 of \cite{BGN}.
If $X$ is isomorphic to $C_2\times L_2$ or $C_2\bigsqcup L_n$ for some
$n\in\IN$, then $\lambda(X)$ is commutative by Theorem 1.1 of \cite{BGi}.

Next, assume that $X=\bigvee_{\alpha\in A}X_\alpha$ is a $0$-bouquet of its subsemigroups $X_\alpha$, $\alpha\in A$, isomorphic to $L_1\sqcup C_2$ or $L_n$, $n\in\IN$. By Theorem 1.1 of \cite{BGi}, the superextension of the semigroups $L_1\sqcup C_2$ and $L_n$, $n\in\IN$, are commutative. Consequently, for every $\alpha\in X_\alpha$ the superextension $\lambda(X_\alpha)$ is commutative and by Proposition \ref{bouquet}, the superextension $\lambda(X)$ is commutative too. This completes the proof of the  ``if'' part.
\smallskip

The prove the ``only if'' part we shall use the following:

\begin{lemma}\label{l9.5} The superextension $\lambda(X)$ of a semigroup $X$ is not commutative if $X$ is isomorphic to one of the semigroups:
\begin{enumerate}
\item $L_1\sqcup C_n$ for $n\ge 3$;
\item $C_n\sqcup L_1$ for $n\ge 3$;
\item $L_1\sqcup C_2\sqcup L_1$;
\item $L_2\sqcup C_2$;
\item $(C_2\times C_2)\sqcup L_1$;
\item $L_1\sqcup (C_2\times C_2)$;
\item $C_2\sqcup C_2$.
\end{enumerate}
\end{lemma}

\begin{proof} 1. If $X=L_1\sqcup C_n=\{e_1\}\sqcup \{a^i\}_{i=0}^{n-1}$ for some $n\ge 3$, then
the maximal linked upfamilies\break
$\Box=\big\la\{e_1,a^0\},\{e_1,a\},\{e_1,a^{-1}\},\{a^0,a,a^{-1}\}\big\ra$
and $\triangle=\big\la\{a^0,a\},\{a^0,a^{-1}\},\{a,a^{-1}\}\big\ra$ do not
commute, since $\{e_1,a^0\}=a^0\{e_1,a^0\}\cup a\{e_1,
a^{-1}\}\in\triangle*\Box$ while $\{e_1,a^0\}\notin \Box*\triangle$.
\smallskip

2. If $X=C_n\sqcup L_1=\{a^i\}_{i=0}^{n-1}\sqcup \{e_2\}$ for some
$n\ge 3$, then  the maximal linked upfamilies\break
$\Box=\big\la\{a^2,a\}, \{a^2,a^0\}, \{a^2,e_2\}, \{a^0,a,e_2\}\big\ra$
and $\triangle=\big\la\{a^0,e_2\},\{a^0,a^2\},\{e_2,a^2\}\big\ra$ do not
commute, since\break $\{a^2,e_2\}=a^2\{a^0,
e_2\}\cup e_2\{a^2,e_2\}\in\Box*\triangle$ while $\{e_2,a^2\}\notin \triangle*\Box$.
\smallskip

3. If $X=L_1\sqcup C_2\sqcup L_1=\{e_1\}\sqcup\{e_2,a\}\sqcup
\{e_3\}$ where $a\ne a^2=e_2$, then the maximal linked upfamilies
$\Box_3=\la\{e_1,e_3\},\{e_2,e_3\},\{a,e_3\},\{e_1,e_2,a\}\ra$ and
$\Box_a=\la\{a,e_1\},\{a,e_2\},\{a, e_3\},\{e_1,e_2,e_3\}\ra$ do
not commute, since  $\{e_1,e_2\}=e_1\{e_1,e_2,e_3\}\cup
e_2\{e_1,e_2,e_3\}\cup a\{a,e_1\}\in\Box_3*\Box_a$ while
$\{e_1,e_2\}\notin\Box_a*\Box_3$.
\smallskip

4. If $X=L_2\sqcup C_2=\{e_1,e_2\}\sqcup \{e_3,a\}$ where $a\ne
a^2=e_3$, then the maximal linked upfamilies\break
$\Box=\la\{e_1,e_2\},\{e_1,e_3\},\{e_1,a\},\{e_2,e_3,a\}\ra$ and
$\triangle=\la\{e_2,a\},\{e_2,e_3\},\{a,e_3\}\ra$ do not commute,
since\break $\{e_2,e_3\}=e_2\{e_2,e_3\}\cup e_3\{e_2,e_3\}\cup
a\{e_2,a\}\in \Box*\triangle$ while
$\{e_2,e_3\}\notin\triangle*\Box$.


5. If $X=(C_2\times C_2)\sqcup\{e_2\}$ where $C_2\times
C_2=\{e_1,a,b,ab\}$ and $a^2=b^2=(ab)^2=e_1$, then the maximal linked
upfamilies $\Box=\la\{a,b\},\{a,e_1\},\{a,e_2\},\{e_1,e_2,b\}\ra$
and $\triangle=\la\{e_1,e_2\},\{e_1,a\},\{e_2,a\}\ra$ do not
commute, since $\{a,e_2\}=a\{e_1,
e_2\}\cup e_2\{e_2,a\}\in\Box*\triangle$ and $\{a,e_2\}\notin \triangle*\Box$.

6. If $X=\{e_1\}\sqcup (C_2\times C_2)$ where $C_2\times
C_2=\{e_2,a,b,ab\}$ and $a^2=b^2=(ab)^2=e_2$, then the maximal linked
upfamilies
$\Box=\la\{e_1,e_2\},\{e_1,a\},\{e_1,b\},\{e_2,a,b\}\ra$ and
$\triangle=\la\{e_2,a\},\{e_2,b\},\{a,b\}\ra$ do not commute,
since $\{e_1,e_2\}=e_2\{e_1,e_2\}\cup a\{e_1,
a\}\in\triangle*\Box$ and $\{e_1,e_2\}\notin \Box*\triangle$.

7. Finally assume that $X=C_2\sqcup C_2=\{e_1,a_1\}\cup\{e_2,a_2\}$ where $e_1<e_2$ are idempotents of $X$, $a_1^2=e_1$, $a_2^2=e_2$, and $e_1*a_2=e_1$. In this case the maximal linked upfamilies
$$\square_e=\big\la\{e_1,a_1\},\{e_1,a_2\},\{e_1,e_2\},\{a_1,a_2,e_2\}\big\ra\mbox{ \ and \ }
\square_a=\big\la\{a_1,e_1\},\{a_1,e_2\},\{a_1,a_2\},\{e_1,e_2,a_2\}\big\ra$$
do not commute as $\{e_1,e_2\}=e_1\{e_1,e_2\}\cup
e_2\{e_1,e_2\}\cup a_2\{e_1,a_2\}\in\square_a*\square_e $ while
$\{e_1,e_2\}\notin\square_e*\square_a$.

\end{proof}

Now we are ready to prove the ``only if'' part of Theorem~\ref{t2}. Assume that the superextension $\lambda(X)$ is commutative.
In this case the regular semigroup $X$ is commutative and consequently $X$ is a Clifford
inverse semigroup. By Theorem~5.1 of \cite{BGN}, the commutativity of $\lambda(X)$ implies that each subgroup of $X$ has cardinality $\le 4$. By Theorem 2.7 \cite{BGi}, the idempotent
band $E(X)=\{x\in X:xx=x\}$ of $X$ is a $0$-bouquet of
finite linear semilattices.

First we assume that $E(X)$ is a finite linear semilattice, which can be written as $E(X)=\{e_1,\dots,e_n\}$ for some idempotents $e_1<\dots<e_n$. For every $i\in\{1,\dots,n\}$ by $H_{e_i}$ we denote the maximal subgroup of $X$ containing the idempotent $e_i$. As we have shown the group $H_{e_i}$ has cardinality $|H_{e_i}|\le 4$.

If $n=1$, then the Clifford inverse semigroup $X$ coincides with the group $H_{e_1}$ and hence is isomorphic to $C_1=L_1$, $C_2$, $C_3$, $C_4$ or $C_2\times C_2$.

So, we assume that $n\ge 2$. Lemma~\ref{l9.5}(2,5) implies that for every $i<n$ the maximal subgroup $H_{e_i}$ has cardinality $|H_{e_i}|\le 2$.
For the maximal idempotent $e_n$ of $E(X)$ the complement $I=X\setminus H_{e_n}$ is an ideal in $X$. So, we can consider the quotient semigroup $X/I$, which is isomorphic to $L_1\sqcup H_{e_n}$. The commutativity of $\lambda(X)$ implies the commutativity of the semigroup $\lambda(X/I)$. Now Lemma~\ref{l9.5}(1,6) implies that $|H_{e_n}|\le 2$.

If $|E(X)|\ge 3$, then for any $1<i<n$, the maximal subgroup
$H_{e_i}$ is trivial according to Lemma~\ref{l9.5}(3) and then for
the maximal idempotent $e_n$, the subgroup $H_{e_n}$ is trivial
according to Lemma~\ref{l9.5}(4). Therefore, all maximal groups
$H_{e_i}$, $1<i\le n$, are trivial. If the group $H_{e_1}$ is
trivial, then $X=E(X)$ is isomorphic to the linear semilattice
$L_n$. If $H_{e_1}$ is not trivial, then $H_{e_1}$ is isomorphic
to $C_2$ and $X$ is isomorphic to $C_2\sqcup L_{n-1}$.

It remains to consider the case $|E(X)|=2$. In this case the groups $H_{e_1}$, $H_{e_2}$ have cardinality $\le 2$ and then $X$ is isomorphic to $L_2$, $C_2\sqcup L_1$, $L_1\sqcup C_2$, $C_2\times L_2$ or $C_2\sqcup C_2$. However the case $X\cong C_2\sqcup C_2$ is excluded by Lemma~\ref{l9.5}(7).
This completes the proof of the case of linear semilattice $E(X)$.
\smallskip

Now we consider the case of non-linear semilattice $E(X)$. Write $E(X)$ as a $0$-bouquet $E(X)=\bigvee_{\alpha\in I}E_\alpha$ of finite linear semilattices $E_\alpha$. Let $e_0$ be the minimal idempotent of the semilattice $E(X)$. Since $E(X)$ is not linear, there are two idempotents $e_1,e_2\in E(X)\setminus\{e_0\}$ such that $e_1e_2=e_0$. We claim that the maximal subgroup $H_{e_0}$ containing the idempotent $e_0$ is trivial. It follows from the ``linear'' case, that $|H_{e_0}|\le 2$. Assuming that $H_{e_0}$ is not trivial, write $H_{e_0}=\{a,e_0\}$ and consider the maximal linked upfamilies
$\triangle_0=\la\{a,e_1\},\{e_1,e_2\},\{a,e_2\}\ra$ and
$\triangle_a=\la\{e_0,e_1\},\{e_1,e_2\},\{e_0,e_2\}\ra$ which do not commute since $\triangle_0*\triangle_a=\la\{e_0\}\ra\ne\la\{a\}\ra=\triangle_a*\triangle_0$.
Consequently, the maximal subgroup $H_{e_0}$ is trivial and hence for every $\alpha\in A$ the subsemigroup $X_\alpha=\bigcup_{e\in E_\alpha}H_e$ is isomorphic to $L_1\sqcup C_2$ or $L_n$, $n\in\IN$, by the preceding ``linear'' case.
\end{proof}

Theorems~\ref{t9.3} and \ref{t2} imply:

\begin{corollary} If a semigroup $X$ has commutative superextension $\lambda(X)$, then
\begin{enumerate}
\item for each $x\in X$ there is a pair $(n,m)\in \{(2,5),(2,6),(3,5),(4,5)\}$ such that $x^n=x^m$;
\item the idempotent semilattice $E(X)=\{x\in X:xx=x\}$ of $X$ is
a $0$-bouquet of finite linear semilattices;
\item the regular part $R(X)=\{x\in X:x\in xXx\}$ of $X$ is isomorphic to one of the
following semigroups:
\begin{itemize}
\item $L_1$, $C_2$, $C_3$, $C_4$, $C_2\times C_2$, $C_2\times L_2$,  $C_2\bigsqcup L_n$ for some $n\in\IN$;
 \item
a 0-bouquet $\bigvee_{\alpha\in
A}X_\alpha$ of subsemigroups $X_\alpha$, $\alpha\in I$, isomorphic
to $L_1\sqcup C_2$ or $L_n$ for $n\ge 2$.
\end{itemize}
\end{enumerate}
\end{corollary}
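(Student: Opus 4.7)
The plan is to derive each of the three claims by applying Theorem~\ref{t9.3} or Theorem~\ref{t2} to a carefully chosen subsemigroup of $X$. First I would record two general reductions that make everything go through. Since the identification of points with principal ultrafilters embeds $X$ as a subsemigroup of $\lambda(X)$, the commutativity of $\lambda(X)$ forces $X$ itself to be commutative, and then by Section~\ref{s2} both $E(X)$ and $R(X)=C(X)$ are subsemigroups of~$X$. Moreover, since the inclusion $Y\hookrightarrow X$ of any subsemigroup is injective, the functoriality of $\upsilon$ recalled in the introduction identifies $\lambda(Y)$ with a closed subsemigroup of $\lambda(X)$; hence $\lambda(Y)$ inherits commutativity from $\lambda(X)$ for every subsemigroup $Y\subset X$.

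For~(1) I would apply Theorem~\ref{t9.3} to the monogenic subsemigroup $\la x\ra=\{x^k\}_{k\in\IN}$, obtaining a pair $(n,m)$ from the eleven-element list of that theorem. Using the identity $x^k=x^\ell$ valid for $k,\ell\ge n$ with $k\equiv\ell\pmod{m-n}$, I would then check case by case that every such pair implies $x^{n'}=x^{m'}$ for some $(n',m')\in\{(2,5),(2,6),(3,5),(4,5)\}$. For instance $x=x^2$ gives $x^2=x^5$; $x=x^3$ gives $x^2=x^6$; $x^2=x^4$ gives $x^2=x^6$; $x^3=x^4$ gives $x^3=x^5$; $x=x^4$ gives $x^2=x^5$; $x=x^5$ gives $x^2=x^6$; and the remaining pairs already lie in the small list.

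For~(2) I would apply Theorem~\ref{t2} to $E(X)$ itself. Every idempotent $e$ satisfies $e=e\cdot e\cdot e\in eE(X)e$, so $E(X)$ is a regular (commutative) semigroup, and its superextension is commutative by the second reduction. Among the possibilities listed in Theorem~\ref{t2}, every option containing a non-trivial cyclic group $C_n$ or a copy of $L_1\sqcup C_2$ has non-idempotent elements and is thus ruled out, leaving only the $0$-bouquets of finite linear semilattices $L_n$ (with the one-summand case covering $E(X)\cong L_n$). Item~(3) is the most direct application: $R(X)=C(X)$ is the union of maximal subgroups $H_e$, hence a regular subsemigroup of~$X$ by Section~\ref{s2}, and $\lambda(R(X))$ is commutative by the second reduction, so the classification of Theorem~\ref{t2} applied to $R(X)$ reads off exactly as the list displayed in~(3).

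The only real labor in this plan is the mechanical bookkeeping for the case analysis in~(1); that step is tedious but completely routine, and none of the three implications involves a conceptual obstruction beyond pulling back along a subsemigroup inclusion.
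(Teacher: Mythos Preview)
Your proposal is correct and follows the same route the paper indicates: the paper simply records that the corollary is implied by Theorems~\ref{t9.3} and~\ref{t2}, and you have spelled out precisely how—passing to the subsemigroups $\langle x\rangle$, $E(X)$, and $R(X)$, using the functorial embedding $\lambda(Y)\hookrightarrow\lambda(X)$ to inherit commutativity, and then reading off the conclusion from the two theorems. The only cosmetic point is that the list in~(3) differs from Theorem~\ref{t2} by moving $L_1$ into the first bullet and absorbing $L_1\sqcup C_2$ into the second as a one-summand $0$-bouquet; this is a trivial reorganization and your phrase ``reads off exactly'' could acknowledge it.
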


\section{The supercommutativity of the superextensions $\lambda(X)$}\label{s10}

By Theorems~\ref{t:varphi}, \ref{t:upsilon-bul}, \ref{t:upsilon}, \ref{t:N2-bul}, \ref{t:N2}, for any semigroup $X$, the semigroups $\upsilon(X)$,
$\upsilon^\bullet(X)$, $\varphi(X)$, $\varphi^\bullet(X)$,
$N_2(X)$, $N_2^\bullet(X)$ are supercommutative if and only if
they are commutative. In contrast, the supercommutativity of the superextension $\lambda(X)$ is not equivalent to its commutativity.

\begin{theorem}\label{t10.1} For a monogenic semigroup $X=\{x^k\}_{k\in\IN}$ the following conditions are equivalent:
\begin{enumerate}
\item the semigroup $\lambda(X)$ is supercommutative;
\item the semigroup $\lambda^\bullet(X)$ is supercommutative;
\item $x^n=x^m$ for some $(n,m)\in \{(1,2),(1,3),(2,3),(2,4),(3,4),(4,5)\}$.
\end{enumerate}
\end{theorem}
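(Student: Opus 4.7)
The implication $(1)\Rightarrow(2)$ is immediate from $\lambda^\bullet(X)\subset\lambda(X)$, so the plan is to prove $(3)\Rightarrow(1)$ and $(2)\Rightarrow(3)$. For $(3)\Rightarrow(1)$ I treat the six pairs individually. The cases $(1,2),(1,3),(2,3)$ are trivial because $|X|\le 2$ forces $\lambda(X)=X$ with $*$ and $\ostar$ both reducing to multiplication in $X$. For $(2,4)$ a direct check shows that $xy=x^3y$ for every $y\in X$, so the map $\pi(x)=x^3$, $\pi(x^k)=x^k$ for $k\ge 2$, exhibits $X$ as a projection extension of $C_x=\{x^2,x^3\}\cong C_2$; by Proposition~\ref{p3.1} and Corollary~\ref{c:pext2} supercommutativity of $\lambda(X)$ descends from that of $\lambda(C_2)=C_2$. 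In cases $(3,4)$ and $(4,5)$ the element $x^{m-1}$ is a two-sided zero, and a short inspection of the (four and twelve, respectively) MLSs in $\lambda(X)$ shows that the product of any two non-principal MLSs collapses, under both $*$ and $\ostar$, to the principal ultrafilter $\la\{x^{m-1}\}\ra$; products involving a principal MLS are handled directly.

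For $(2)\Rightarrow(3)$, commutativity of $\lambda^\bullet(X)$ together with Theorem~\ref{t9.3} forces $(n,m)$ into the eleven-pair list of that theorem, so only the five extra pairs $(1,4),(1,5),(2,5),(3,5),(2,6)$ must be excluded. For the groups $X\cong C_n$ with $n\in\{3,4\}$ (pairs $(1,4),(1,5)$) I use the MLS $\triangle$ generated by the 2-subsets of a 3-element subset $Y\subset X$ (with $Y=X$ when $n=3$); every pointwise product $A\cdot B$ of two such generators contains $a^2$ or $a^{-1}$, hence no such product is a subset of $\{e,a\}$, so $\{e,a\}\notin\triangle\ostar\triangle$. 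On the other hand, the choice $A=B_e=\{e,a\}$, $B_a=\{e,a^{-1}\}$ gives $\bigcup_{c\in A}cB_c=\{e,a\}\cup\{a,e\}=\{e,a\}$, whence $\{e,a\}\in\triangle*\triangle$ and supercommutativity fails. The pairs $(2,5)$ and $(2,6)$ reduce to the above group cases: the assignment $\pi(x)=x^{m-1}$ projects $X$ onto its cyclic subgroup $C_x$ isomorphic to $C_3$ (respectively $C_4$) and one verifies $xy=\pi(x)y$ on generators, so by Corollary~\ref{c:pext2} supercommutativity of $\lambda(X)$ would imply it for $\lambda(C_x)$, contradicting the step just performed.

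The main obstacle is the pair $(3,5)$: here $C_x=\{x^3,x^4\}\cong C_2$, but $x\cdot x=x^2\notin C_x$ rules out any projection onto $C_x$, so no structural reduction is available. I handle this case directly with the MLS $\triangle_2=\la\{x,x^3\},\{x,x^4\},\{x^3,x^4\}\ra$ used in Theorem~\ref{t9.3}. Computing the six pointwise products of its generators yields $\triangle_2\ostar\triangle_2=\la\{x^2,x^4\},\{x^3,x^4\}\ra$, and since neither minimal generator is contained in $\{x^2,x^3\}$ one has $\{x^2,x^3\}\notin\triangle_2\ostar\triangle_2$. However, the choice $A=\{x,x^4\}$, $B_x=\{x,x^4\}$, $B_{x^4}=\{x,x^3\}$ yields $\bigcup_{c\in A}cB_c=\{x^2,x^5\}\cup\{x^5,x^7\}=\{x^2,x^3\}$, so $\{x^2,x^3\}\in\triangle_2*\triangle_2$. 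Hence $\triangle_2*\triangle_2\ne\triangle_2\ostar\triangle_2$, which completes the exclusion and thereby the theorem.
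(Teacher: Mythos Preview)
Your proof is correct and follows the same overall architecture as the paper's: the cycle $(3)\Rightarrow(1)\Rightarrow(2)\Rightarrow(3)$, invoking Theorem~\ref{t9.3} to cut the work in $(2)\Rightarrow(3)$ down to excluding the five extra pairs, and explicit maximal linked systems for the non-reducible cases. The details differ only tactically. For $(n,m)=(2,4)$ in $(3)\Rightarrow(1)$ you invoke the projection extension onto $C_x\cong C_2$, whereas the paper computes $\triangle\ostar\triangle=\la x^2\ra=\triangle*\triangle$ directly; for $(3,4)$ and $(4,5)$ the paper's one-line reason behind your ``collapse to the zero'' claim is that any non-principal MLS on $X$ contains a set avoiding $x$, and the product of two such sets is the singleton $\{x^{m-1}\}$. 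In $(2)\Rightarrow(3)$ the paper disposes of $(1,4),(1,5),(2,5),(2,6)$ uniformly by observing that $|m-n|\ge 3$ yields a cyclic subgroup $C$ with $|C|\ge 3$ and then applying exactly your $\triangle$-argument inside $C$; your route via projection extensions for $(2,5),(2,6)$ is an equally valid shortcut (note that $X$ is finite here, so $\lambda^\bullet(X)=\lambda(X)$ and Corollary~\ref{c:pext2} transfers the failure back). For $(3,5)$ your single witness $\triangle_2$ works; the paper instead uses the pair $\square,\triangle$ and exhibits $\square\ostar\triangle\notin\lambda(X)$---different witnesses, same conclusion.
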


\begin{proof} We shall prove the implications $(3)\Ra(1)\Ra(2)\Ra(3)$ among which the implication $(1)\Ra(2)$ is trivial.
\vskip5pt

$(3)\Ra(1)$. Assume that $x^n=x^m$ for some $(n,m)\in \{(1,2),(1,3),(2,3),(2,4),(3,4),(4,5)\}$.
For $(n,m)\in \{(1,2),(1,3),(2,3)\}$ the monogenic semigroup $X$ has cardinality $|X|\le 2$ and then the semigroup $\lambda(X)=X$ is supercommutative.

If $(n,m)=(2,4)$, then the monogenic semigroup $X$ has cardinality $|X|=3$ and for the unique non-principal maximal linked system $\triangle=\{A\subset X:|A|\ge 2\}$ in $\lambda(X)$ the product
$\triangle\ostar\triangle$ is equal to the principal ultrafilter $\la x^2\ra=\triangle*\triangle$, which implies that the semigroup $\lambda(X)$ is supercommutative.

If $(n,m)\in \{(3,4),(4,5)\}$, then any two nonprincipal maximal
linked systems $\A, \mathcal B$ contain sets $A\in\A,
B\in\mathcal B$ such that $x\notin A, x\notin B$. Then $AB$ is a
singleton, which implies $\A\ostar\mathcal B=\A*\mathcal B$. Consequently, the semigroup $\lambda(X)$ is supercommutative.
\smallskip

$(2)\Ra(3)$ Assume that for a monogenic semigroup
$X=\{x^k\}_{k\in\IN}$ the superextension $\lambda^\bullet(X)$   is
supercommutative. Then it is commutative and by
Theorem~\ref{t9.3}, $x^n=x^m$ for some pair $$(n,m)\in
\{(1,2),(1,3),(2,3),(1,4),(2,4),(3,4),(1,5),(2,5),(3,5),(4,5),(2,6)\}.$$
We claim that $|m-n|\le 2$. In the opposite case $X$ contains a
cyclic subgroup $C$ of cardinality $|C|\ge 3$. The subgroup $C$
contains an element $x\in C$ such that the points $x^{-1},x^0,x^1$
are pairwise distinct. Then for the maximal linked system
$\triangle=\la\{x^{-1},x^0\},\{x^0,x^1\},\{x^{-1},x^1\}\ra\in\lambda^\bullet(C)\subset
\lambda^\bullet(X)$ the product
$$\triangle\ostar\triangle=\big\la\{x^{-2},x^{-1},x^0\},\{x^{-1},x^0,x^{1}\},\{x^0,x^1,x^2\}\big\ra$$does not belong to $\lambda(C)$, which implies that $\triangle\ostar\triangle\ne\triangle*\triangle$ and contradicts the supercommutativity of $\lambda(X)$. So, $|m-n|\le 2$, which implies that
$(n,m)\in\{(1,2),(1,3),(2,3),(2,4),(3,4),(3,5),(4,5)\}$. It
remains to exclude the case $(n,m)=(3,5)$. In this case
$X=\{x,x^2,x^3,x^4\}$ and for the maximal linked upfamilies
$\square=\big\langle\{x^2,x^3,x^4\},\{x,x^2\},\{x,x^3\},\{x,x^4\}\big\ra$
and $\triangle=\big\la\{x,x^2\},\{x,x^3\},\{x^2,x^3\}\big\ra$ we
get $$\square \ostar\triangle=\big\langle \{x^2, x^4\},
\{x^3,x^4\}\big\rangle\neq \square*\triangle,$$ which contradicts
the supercommutativity of the semigroup $\lambda(X)$.
\end{proof}

In the following theorem by $V_3$ we denote the semilattice $\{0,1\}^2\setminus\{(1,1)\}$ endowed with the operation of coordinatewise minimum. Observe that a semilattice $X$ is isomorphic to $V_3$ if and only if $|X|=3$ and $X$ is not linear.

\begin{theorem}\label{t10.2} The superextension $\lambda(X)$ of a regular semigroup $X$ is supercommutative if and only if $X$ is isomorphic to one of the semigroups: $C_2$, $L_1\sqcup C_2$, $V_3$ or $L_n$ for $n\in\IN$.
\end{theorem}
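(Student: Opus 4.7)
The plan is to split the equivalence into sufficiency and necessity. For sufficiency I would verify supercommutativity of $\lambda(X)$ for each of the four admissible classes in turn. The case $X=C_2$ is trivial since $\lambda(C_2)=C_2$. For $X=V_3$ and $X=L_1\sqcup C_2$, both three-element semigroups, $\lambda(X)$ consists of the three principal ultrafilters together with the unique maximal linked upfamily $\triangle=\la\{x,y\}:x\ne y\in X\ra$, so supercommutativity reduces to a finite tabulation. For $V_3=\{0,a,b\}$ with $ab=0$, every product involving $\triangle$ collapses to $\la 0\ra$ under both $\ostar$ and $*$; for $L_1\sqcup C_2=\{0,e,a\}$ with $a^2=e$, one verifies $\triangle\ostar\triangle=\triangle$ and $\triangle\ostar\la x\ra=\triangle$ for $x\in\{e,a\}$, with the Ellis products matching by the same computation. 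For $X=L_n$, supercommutativity of $\lambda(L_n)$ is established through a direct analysis of maximal linked upfamilies on a finite chain, using the linearity of the minimum operation $x*y=\min(x,y)$ to rewrite any basic Ellis product $\bigcup_{a\in A}a*B_a$ as a pointwise product $A'*B'$ with $A'\in\A$ and $B'\in\BB$ via thresholds determined by the ordered structure.

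For necessity, assume $\lambda(X)$ is supercommutative. Since supercommutativity implies commutativity, Theorem~\ref{t2} restricts $X$ to one of the semigroups $C_2,C_3,C_4,C_2\times C_2,C_2\times L_2,L_1\sqcup C_2,C_2\sqcup L_n$ ($n\in\IN$), or to a $0$-bouquet $\bigvee_{\alpha\in A}X_\alpha$ with each $X_\alpha\in\{L_1\sqcup C_2,L_m\}$. I would eliminate in turn each semigroup not in the list of Theorem~\ref{t10.2}. The cyclic groups $C_3$ and $C_4$ are ruled out immediately by Theorem~\ref{t10.1}. For $C_2\times C_2=\{e,a,b,c\}$ (with $a,b,c$ the non-identity elements), I would use $\triangle=\la\{a,b\},\{a,c\},\{b,c\}\ra$: any two of its $2$-subset generators share an element whose square is $e$, so $\{a,b,c\}\notin\triangle\ostar\triangle$; choosing $A=\{a,b\}\in\triangle$, $B_a=\{b,c\}$, $B_b=\{a,c\}$ then yields $\{a,b,c\}\in\triangle*\triangle$. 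Analogous triangles placed on well-chosen three-element subsets of $C_2\times L_2$ and of $C_2\sqcup L_n$ exhibit the same separation in those cases.

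The main obstacle is the bouquet case. I must show that if $X=\bigvee_{\alpha\in A}X_\alpha$ has at least two nontrivial factors and is not isomorphic to $L_2\vee L_2=V_3$, then $\lambda(X)$ is not supercommutative. If some factor $X_\alpha$ is isomorphic to $L_1\sqcup C_2$ or contains a chain of length at least $3$ (i.e., $X_\alpha\cong L_m$ with $m\ge 3$), I would pick elements $a<a'$ in one branch (with $a'\ne 0$) and $b\ne 0$ in a different branch and examine a maximal linked upfamily $\triangle$ extending $\la\{a,a'\},\{a,b\},\{a',b\}\ra$. Every pointwise product of two generators of $\triangle$ either descends into the downset of $a$ through the meet with $a'$ or passes through the shared zero of the bouquet, so the generator $\{a',b\}$ of $\triangle$ drops out of $\triangle\ostar\triangle$; on the other hand, $\triangle*\triangle$ remains maximal linked and must recover $\{a',b\}$ via a suitable Ellis sum. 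The technical challenge is organizing the case analysis cleanly so that every bouquet configuration outside $L_2\vee L_2$ admits such a witness and verifying that each candidate $\triangle$ is genuinely maximal linked. Once this is completed, only $C_2$, $L_n$, $V_3$, and $L_1\sqcup C_2$ survive, giving the stated characterization.
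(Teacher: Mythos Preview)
Your overall plan is sound and close in spirit to the paper's argument, but there is one genuine gap and one organisational difference worth noting.

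\textbf{The gap.} In the bouquet case you assert that, once you know $\{a',b\}$ drops out of $\triangle\ostar\triangle$, the Ellis product $\triangle*\triangle$ ``remains maximal linked and must recover $\{a',b\}$''. Maximal linkedness only guarantees that \emph{either} $\{a',b\}$ \emph{or} its complement lies in $\triangle*\triangle$; it does not force $\{a',b\}$ in. In fact, for the configuration $0<a<a'$ in one branch and $b$ in another, a direct check shows $\{a',b\}\notin\triangle*\triangle$: for any $A\in\triangle$ containing $a$ one has $a\cdot B_a\subset\{0,a\}$ for every $B_a\in\triangle$, so no Ellis sum lands inside $\{a',b\}$. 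The correct (and easier) route, which the paper takes, is simply to observe that $\triangle\ostar\triangle$ fails to be maximal linked (some two-element set and its complement both lie outside it), so automatically $\triangle\ostar\triangle\ne\triangle*\triangle$. This bypasses any need to exhibit a specific element of $\triangle*\triangle$. Relatedly, your ``$a<a'$ in one branch'' template does not literally apply when the factor is $L_1\sqcup C_2$, since the two nonzero elements there form a copy of $C_2$ rather than a chain; you need a separate (equally short) triangle for that situation.

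\textbf{The difference in organisation.} For necessity the paper does not eliminate $C_3$, $C_4$, $C_2\times C_2$, $C_2\times L_2$ one by one. Instead it first proves in one stroke that every subgroup of $X$ has order $\le 2$: if $G\le X$ has an element $x$ with $x^{-1},x^0,x^1$ distinct, then for $\triangle=\la\{x^{-1},x^0\},\{x^0,x\},\{x^{-1},x\}\ra$ one gets $\triangle\ostar\triangle\notin\lambda(X)$. Combined with Theorem~\ref{t2} this immediately cuts the list down to $C_2$, $C_2\sqcup L_n$, and bouquets of $L_1\sqcup C_2$ or $L_m$, after which only the $C_2\sqcup L_1$ subsemigroup and the bouquet analysis (split by $|I|=1,2,\ge3$) remain. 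Your case-by-case eliminations will work, but the subgroup bound is a cleaner unifying step and also handles $C_2\times L_2$ without a bespoke triangle.
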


\begin{proof} First we prove the ``if'' part of the theorem. If $X=C_2$, then its superextension $\lambda(X)=X$ is supercommutative as all maximal linked upfamilies on $X$ are principal ultrafilters.

If $X=L_1\sqcup C_2$, then $\lambda(X)$ is supercommutative since for the unique non-principal maximal linked system $\triangle=\{A\subset X:|A|\ge 2\}$ we get $\triangle\ostar\triangle=\triangle=\triangle*\triangle$.

If $X=V_3$, then $\lambda(X)$ is supercommutative since for the unique non-principal maximal linked system $\triangle=\{A\subset X:|A|\ge 2\}$ the products $\triangle\ostar\triangle=\la \min V_3\ra=\triangle*\triangle$ coincide with the principal ultrafilter generated by the minimal element $(0,0)=\min V_3$ of the semilattice $V_3$.

If $X=L_n$ for some $n\in\IN$, then the supercommutativity of the semigroup $\lambda(X)$ follows from Theorem 2.5 of \cite{BGs}.
\smallskip

To prove the ``only if'' part, assume that $X$ is a regular
semigroup with supercommutative superextension $\lambda(X)$. First
observe that every subgroup $G$ of $X$ has cardinality $|G|\le 2$.
In the opposite case the group $G$ contains an element $x\in X$
such that $|\{x^1,x^0,x^{-1}\}|=3$ where $x^0$ is the idempotent
of the group $G$. Then for the maximal linked system
$\triangle=\big\la\{x^{-1},x^0\},\{x^0,x^1\},\{x^{-1},x^1\}\big\ra$
the product $\triangle\ostar\triangle=\big\la
\{x^{-2},x^{-1},x^0\},\{x^{-1},x^0,x^1\},\{x^0,x^1,x^2\}\big\ra$
does not belong to $\lambda(X)$ and hence is not equal to
$\triangle*\triangle$. This contradiction shows that all subgroups
of $X$ has cardinality $\le 2$. This fact combined with
Theorem~\ref{t2} yields that $X$ is isomorphic to one of the
semigroups:
\begin{itemize}
\item $L_1$, $C_2$, $C_2\bigsqcup L_n$ for some $n\in\IN$;
 \item a 0-bouquet $\bigvee_{\alpha\in
I}X_\alpha$ of subsemigroups $X_\alpha$, $\alpha\in I$, isomorphic
to $L_1\sqcup C_2$ or $L_n$ for $n\ge 2$.
\end{itemize}
It remains to exclude the semigroups from this list, whose superextensions are not supercommutative.

If $X=C_2\bigsqcup L_n$, then $X$ contains the semigroup $C_2\sqcup L_1=\{e_1,a\}\cup\{e_2\}$ where $a^2=e_1\ne a$ and $e_1<e_2$ are idempotents. In this case for the maximal linked system $\triangle=\big\la\{a,e_1\},\{e_1,e_2\},\{a,e_2\}\big\ra$ we get $\triangle\ostar\triangle=\big\la\{a,e_1\},\{e_1,e_2\}\big\ra\notin\lambda(X)$ and hence $\triangle\ostar\triangle\ne\triangle*\triangle$, which means that $\lambda(X)$ is not supercommutative.

If $X=L_1\sqcup C_2=\{e_1\}\cup\{e_2,a\}$ where $a^2=e_2>e_1$, then for the maximal linked system $\triangle=\big\la\{a,e_1\},\{e_1,e_2\},\{a,e_2\}\ra$ we get $\triangle\ostar\triangle=\la\{e_1,e_2\},\{e_2,a\}\ra\notin\lambda(X)$ and hence $\triangle\ostar\triangle\ne\triangle*\triangle$, which means that $\lambda(X)$ is not supercommutative.

It remains to consider the case when $X=\bigcup_{\alpha\in I}X_\alpha$ is a $0$-bouquet of subsemigroups $X_\alpha$, $\alpha\in I$, isomorphic to $L_n$ for $n\ge 2$. If $|I|=1$, then $X$ is isomorphic to $L_n$ for some $n\ge 2$ and $\lambda(X)$ is supercommutative according to the ``if''part.

If $|I|=2$, then $X=X_i\vee X_j$ for some non-trivial linear subsemilattices $X_i,X_j\subset X$ such that $X_j*X_j=X_i\cap X_j=\{\min X\}$. If $|X_i|=|X_j|=2$, then the semilattice $X$ is isomorphic to the semilattice $V_3$ and its superextension $\lambda(X)$ is supercommutative as proved in the ``if'' part. So, we assume that $|X_i|\ge 3$ or $|X_j|\ge 3$. We loss no generality assuming that $|X_i|\ge 3$. Then we can find idempotents $e_0<e_1<e_2$ in $X_i$ and $e_3\in X_j\setminus X_i$ such that $e_1e_3=e_2e_3=e_0=\min X$. In this case for the maximal linked system $\triangle=\la\{e_1,e_2\},\{e_1,e_3\},\{e_2,e_3\}\ra$ the product $\triangle\ostar\triangle=\la\{e_0,e_1\},\{e_1,e_2\}\ra\notin\lambda(X)$ and hence $\triangle\ostar\triangle\ne\triangle*\triangle$, which means that $\lambda(X)$ is not supercommutative.

If $|I|\ge 3$, then the semigroup $X$ contains a 4-element semilattice $V_4=\{e_0,e_1,e_2,e_3\}$ where $e_ie_j=e_0=\min X$ for any distinct number $i,j\in\{1,2,3\}$. In this we can consider the maximal linked system $\triangle=\la\{e_1,e_2\},\{e_1,e_3\},\{e_2,e_3\}\ra\in\lambda(V_4)\subset\lambda(X)$ and observe that $\triangle\ostar\triangle=\la\{e_0,e_1\},\{e_0,e_2\},\{e_0,e_3\}\ra\notin\lambda(X)$. Consequently, $\triangle\ostar\triangle\ne\triangle*\triangle$ and the semigroup $\lambda(X)$ is not supercommutative.
\end{proof}

Theorems~\ref{t10.1} and \ref{t10.2} imply:

\begin{corollary} If a semigroup $X$ has supercommutative superextension $\lambda(X)$, then
\begin{enumerate}
\item for each $x\in X$ we get $x^4\in\{x^2,x^5\}$;
\item the regular
part $R(X)=\{x\in X:x\in xXx\}$ of $X$ is isomorphic to  $C_2$, $L_1\sqcup C_2$, $V_3$ or $L_n$ for some $n\in\IN$.
\end{enumerate}
\end{corollary}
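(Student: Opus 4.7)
The plan is to derive both clauses by reducing to the monogenic and regular cases already treated in Theorems~\ref{t10.1} and~\ref{t10.2}. The key observation is that whenever $S\subset X$ is a subsemigroup, the induced map $\upsilon i:\lambda(S)\to\lambda(X)$ coming from the inclusion $i:S\hookrightarrow X$ is an embedding onto a subsemigroup of $\lambda(X)$, and supercommutativity, being a universally quantified equation between $\ostar$ and $*$ both of which are computed from products taken in $S\subset X$, is inherited by this subsemigroup. Since supercommutativity implies commutativity, $X$ itself is commutative, so (per the paper's preliminaries) $R(X)$ is a subsemigroup of $X$ and is by definition regular.

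For item (1), I would fix $x\in X$ and apply the inheritance principle to the monogenic subsemigroup $\la x\ra=\{x^k\}_{k\in\IN}\subset X$, concluding that $\lambda(\la x\ra)$ is supercommutative. Theorem~\ref{t10.1} then gives a pair
$$(n,m)\in\{(1,2),(1,3),(2,3),(2,4),(3,4),(4,5)\}$$
with $x^n=x^m$. A short case check verifies $x^4\in\{x^2,x^5\}$ in each case: the first three relations force $x^2=x^3$ (and hence $x^k=x^2$ for all $k\ge 2$), the relation $x^2=x^4$ is the statement itself, while $x^3=x^4$ and $x^4=x^5$ both yield $x^4=x^5$.

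For item (2), I would apply the inheritance principle to the subsemigroup $R(X)\subset X$: since $\lambda(R(X))$ embeds into $\lambda(X)$ as a subsemigroup, it is supercommutative, and $R(X)$ is regular by construction. Theorem~\ref{t10.2} then forces $R(X)$ to be isomorphic to one of $C_2$, $L_1\sqcup C_2$, $V_3$, or $L_n$ for some $n\in\IN$.

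The only non-routine point to be careful about is the inheritance step: one must check that for $\A,\BB\in\lambda(S)\subset\lambda(X)$, the pointwise product $\A\ostar\BB$ and the Ellis product $\A*\BB$ computed in $\upsilon(S)$ agree with those computed in $\upsilon(X)$ (under the identification via $\upsilon i$), so that supercommutativity in $\lambda(X)$ really descends to $\lambda(S)$. This is immediate from the formulas $A\cdot B\subset S$ whenever $A,B\subset S$ and from the fact noted after the definition of $\upsilon f$ that the induced map is a topological and algebraic embedding when $f$ is injective. With this in hand, both clauses follow directly from the already established Theorems~\ref{t10.1} and~\ref{t10.2}.
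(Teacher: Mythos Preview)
Your proof is correct and follows exactly the approach the paper intends: the corollary is stated with the single justification ``Theorems~\ref{t10.1} and \ref{t10.2} imply'', and your argument spells this out by passing to the subsemigroups $\langle x\rangle$ and $R(X)$ and invoking the two theorems. The inheritance step you flag is indeed the only point requiring care, and your observation that the embedding $\upsilon i$ respects both $*$ and $\ostar$ (so that supercommutativity descends) handles it.
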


\end{document}